\theoremstyle{plain}
\newtheorem{thm}{Theorem}[section]
\newtheorem{lem}[thm]{Lemma}
\newtheorem{prop}[thm]{Proposition}
\newtheorem{cor}[thm]{Corollary}
\theoremstyle{definition}
\newtheorem{conj}{Conjecture}[section]
\newtheorem{exmp}{Example}[section]
\theoremstyle{remark}
\newtheorem*{rem}{Remark}
\title{Toeplitz kernels and the backward shift}
\author{Ryan O'Loughlin \\ E-mail address: \href{mailto:mm1rol@leeds.ac.uk}{mm12rol@leeds.ac.uk}}
\affil{School of Mathematics, University of Leeds, Leeds, LS2 9JT, U.K.}
\begin{document}
\maketitle
\begin{abstract}
    In this paper we study the kernels of Toeplitz operators on both the scalar and the vector-valued Hardy space for $ 1 < p < \infty $. We show existence of a minimal kernel for any element of the vector-valued Hardy space and we determine a symbol for the corresponding Toeplitz operator. In the scalar case we give an explicit description of a maximal function for a given Toeplitz kernel which has been decomposed in to a certain form. In the vectorial case we show not all Toeplitz kernels have a maximal function and in the case of $p=2$ we find the exact conditions for when a Toeplitz kernel has a maximal function. For both the scalar and vector-valued Hardy space we study the minimal Toeplitz kernel containing multiple elements of the Hardy space, which in turn allows us to deduce an equivalent condition for a function in the Smirnov class to be cyclic for the backward shift.

 \vskip 0.5cm
\noindent Keywords: Vector-valued Hardy space, Toeplitz operator, Backward shift operator.
 \vskip 0.5cm
\noindent MSC: 30H10, 47B35, 46E15.
\end{abstract}

\section{Introduction}
The purpose of this paper is to study the kernels of Toeplitz operators on vector-valued Hardy spaces. In particular we shall address the question of whether there is a smallest Toeplitz kernel containing a given element or subspace of the Hardy space. This will in turn show how Toeplitz kernels can often be completely described by a fixed number of vectors, called maximal functions. We also discover an interesting and fundamental link between this topic and cyclic vectors for the backward shift.

Throughout we will generally work in the vector-valued Hardy space of functions analytic in the unit disc. Throughout the paper we will fix  $1 < p < \infty$. The vector-valued Hardy space is denoted $H^p(\mathbb{D},\mathbb{C}^n)$ and is defined to be a column vector of length $n$, where each entry takes values in $H^p$. Background theory on the classical Hardy space $H^p$ can be found in \cite{duren1970theory, nikolski2002operators}.

Let $P$ be the pointwise Riesz projection $L^p(\mathbb{D}, \mathbb{C}^n) \to H^p(\mathbb{D},\mathbb{C}^n)$. For $G$ a $n$-by-$n$ matrix with each entry of $G$ taking values in $L^{\infty}(\mathbb{T})$ ($\mathbb{T}$ is the unit circle) the Toeplitz operator on the space $H^p(\mathbb{D},\mathbb{C}^n)$, with symbol $G$, is defined by 
$$
T_G(f) = P(Gf).
$$
We have the direct sum decomposition $L^p(\mathbb{D}, \mathbb{C}^n) =  \overline{H^p_0}(\mathbb{D},\mathbb{C}^n) \oplus H^p(\mathbb{D},\mathbb{C}^n)$, where $\overline{H^p_0}(\mathbb{D},\mathbb{C}^n):= \{\overline{f}: f \in H^p(\mathbb{D},\mathbb{C}^n), f(0) = 0 \}$, and hence $f \in \ker T_G$ iff $Gf \in \overline{H^p_0}(\mathbb{D},\mathbb{C}^n)$. Perhaps the most fundamental example of Toeplitz kernels is the class of model spaces; for $\theta$ an inner function, the kernel of the Toeplitz operator $T_{\overline{\theta}}$ is denoted $K_{\theta}$ and can be verified to be $H^p \cap \theta \overline{H^p_0}.$ It follows from Beurling's Theorem that the model spaces are the non-trivial closed invariant subspaces for the backward shift operator on $H^p$. 

The concept of nearly backward shift invariant subspaces was first introduced by Hitt in \cite{hitt1988invariant} as a generalisation to Hayashi's results concerning Toeplitz kernels in \cite{MR853630}. These spaces were then studied further by Sarason \cite{sarason1988nearly}. The idea of near invariance was then generalised and it was first demonstrated in \cite{MR3286053} that for any inner function $\eta$, Toeplitz kernels are nearly $\overline{\eta}$ invariant. This means if $\phi$ is in the kernel of a Toeplitz operator $T_G: H^p(\mathbb{D},\mathbb{C}^n) \to H^p(\mathbb{D},\mathbb{C}^n)$ and $\phi$ is such that $\overline{\eta} \phi \in H^p(\mathbb{D},\mathbb{C}^n)$, then $\overline{\eta} \phi \in \ker T_G$. This observation shows not all $\phi \in H^P(\mathbb{D}, \mathbb{C}^n)$ lie in one-dimensional Toeplitz kernels. In the scalar case (i.e when $n=1$) Theorem 5.1 in \cite{MR3286053} shows the existence of a Toeplitz kernel of smallest size containing $\phi \in H^p$ (formally known as the minimal kernel for $\phi$ and denoted $\kappa_{min}(\phi)$), furthermore a Toeplitz operator $T_g$ is defined such that $\kappa_{min}(\phi) = \ker T_g$. This motivates our study for section 2 where we address the question: is there a minimal Toeplitz kernel containing a given element $\phi \in H^p(\mathbb{D}, \mathbb{C}^n )$?

In section 3 we consider the following parallel question: given any Toeplitz kernel $K$ does there exist a $\phi$ such that $K = \kappa_{min}(\phi)$? We call such a $\phi$ a \textit{maximal function} for $K$. It has been shown in \cite{MR3286053} that in the scalar Toeplitz kernel case,  whenever the kernel is non-trivial there does exist a maximal function. Theorem 3.17 in \cite{camara2018scalar} shows that for $p=2$ every matricial Toeplitz kernel which can be expressed as a fixed vector-valued  function multiplied by a non-trivial scalar Toeplitz kernel also has a maximal function. The results of section 3 show not all non-trivial matricial Toeplitz kernels have a maximal function and for $p=2$ we find the exact conditions for when a Toeplitz kernel has a maximal function. An interesting application of the study of maximal functions is given in \cite{MR3806717}, which fully characterises multipliers between Toeplitz kernels in terms of their maximal functions.

Section 5 of \cite{MR3286053} asks if there is a minimal Toeplitz kernel containing a closed subspace $E \subseteq H^p(\mathbb{D}, \mathbb{C}^n) $, so in sections 4 and 5 we turn our attention to finding the minimal kernel of multiple elements $ f_1 \hdots f_k \in H^p(\mathbb{D}, \mathbb{C}^n)$. This in turn allows us to find the minimal Toeplitz kernel containing a finite-dimensional space $E$, as we can set $E = $span$\{f_1 \hdots f_k \}$. When considering scalar Toeplitz kernels previous results considering the minimal kernel for multiple elements have been presented in \cite{camara2016model}, in particular Theorem 5.6 of \cite{camara2016model} shows that when $\kappa_{min}(f_j) =  K_{\theta_j}$ for some inner function $\theta_j$ then $\kappa_{min}(f_1 \hdots f_j) = K_{LCM(\theta_1, \hdots \theta_j)}$. The corollaries of section 4 show the fundamental link between the minimal kernel of two elements in $H^p$ and the cyclic vector for the backward shift, in fact we deduce an equivalent condition for a function to be cyclic for the backward shift on $N^+$.

\subsection{Notations and convention}
\begin{itemize}
    \item For a function $f \in H^p$ we write $f = f^i f^o$, where $f^i / f^o$ is an inner/outer factor of $f$ respectively. Furthermore the inner/outer factor of $f$ is unique up to a unimodular constant.
    \item The backward shift is denoted $B$, and is defined by $B(f) = \frac{f-f(0)}{z}$, where $f(z)=\sum_{n=0}^{\infty}a_n z^n$ is an analytic function in the unit disc.
    \item Toeplitz operators on $H^p$ will be referred to as scalar Toeplitz operators and Toeplitz operators on $H^p(\mathbb{D},\mathbb{C}^n)$ will be referred to as matricial Toeplitz operators.
    \item $GCD$ stands for greatest common divisor and for two inner functions $I_1, I_2$ $I_1|I_2$ will denote that $I_1$ divides $I_2$.
    \item Throughout, all Toeplitz operators will be bounded, and hence have bounded symbols.
\end{itemize}

\section{Minimal kernel of an element in $H^p(\mathbb{D}, \mathbb{C}^n )$}
For $G$ a $n$-by-$n$ matrix symbol we say $\ker T_G$ is the minimal kernel of 
$ \begin{pmatrix}
\phi_1 &
\hdots &
\phi_n
\end{pmatrix}^T
$ if $\begin{pmatrix}
\phi_1 &
\hdots &
\phi_n
\end{pmatrix}^T \in \ker T_G$, and if $\begin{pmatrix}
\phi_1 &
\hdots &
\phi_n
\end{pmatrix}^T \in \ker T_{H}$ for any other $n$-by-$n$ matrix symbol $H$ we have $ \ker T_{G} \subseteq \ker T_{H}$.

Section 5.1 in \cite{MR3286053} addresses whether there always exists a minimal Toeplitz kernel containing a function in $H^p(\mathbb{D}, \mathbb{C}^n)$. A complete answer to this question was not given, however Theorem 5.5 showed the existence of a minimal Toeplitz kernel containing any rational $\phi$ in $H^p(\mathbb{D}, \mathbb{C}^n)$. The main result of this section is to give a complete answer to this question. We will show existence of a minimal Toeplitz kernel containing any $\phi \in H^p(\mathbb{D}, \mathbb{C}^n)$, and define an operator $T_G$ such that $\kappa_{min}(\phi) = \ker T_G$.

\begin{lem}\label{outerconstruct}
 For any $\phi_1 \hdots \phi_n \in H^p$ there exists an outer function $u$ such that $|u| = |\phi_1| + \hdots + |\phi_n| + 1$.
\end{lem}
\begin{proof}
Outer functions have a representation 
$$
u(z) = \alpha  \exp \left( \frac{1}{2\pi} \int_0^{2\pi} \frac{e^{it} + z}{e^{it} - z} k(e^{it}) dt \right),
$$
where $|\alpha|=1$, $k \in L^1(\mathbb{T})$ is real. Moreover $k = \log|u|$.\

In the above representation, if we let $k = \log(|\phi_1| + \hdots + |\phi_n| + 1)$ it then follows that $|u| = |\phi_1| + \hdots + |\phi_n| + 1$. It can be seen that $k = \log(|\phi_1| + \hdots + |\phi_n| + 1) \in L^1(\mathbb{T})$, as $0 < \log(1+x) < x $ for all $x >0 $, and $\phi_1 \hdots \phi_n \in L^1$.
\end{proof}

We say $f$ belongs to the Smirnov class, denoted $N^+$, if $f$ is holomorphic in the disc and
$$
\lim_{r \to 1^-} \int_{\mathbb{T}} \log(1+|f(rz)|) dm(z) = \int_{\mathbb{T}} \log(1+|f(z)|) dm(z) < \infty,
$$
where $dm$ is the normalised Lebesgue measure on $\mathbb{T}$, i.e $dm = d\theta / 2 \pi$. On $N^+$ the metric is defined by 
$$
\rho(f,g) = \int_{\mathbb{T}} \log(1+|f(z) - g(z)|) dm(z).
$$

Let $\log L$ denote the class of complex measurable functions $f$ on $\mathbb{T}$ for which $\rho(f,0) < \infty$. One can check $\log L$ is an algebra. Furthermore section 3.6.3 of \cite{cima2000backward} along with the argument laid out on p. 122 of Gamelin's book \cite{gamelin1984uniform} shows that when $\log L $ is equipped with $\rho$ as a metric $\log L $ is a topological algebra ($f_n \to f$ and $g_n \to g$ in $\log L $ $\implies f_n + g_n \to f + g$ and $f_n g_n \to fg$ in $\log L $). Proposition 3.6.10 in \cite{cima2000backward} further shows that $N^+$ is the closure of the analytic polynomials in $\log L $, and hence $N^+$ is a topological algebra.

Throughout various literature there have many equivalent ways to define the Smirnov class; for the sake of completeness we list these in the following proposition.

\begin{prop}\label{quotientouter}
The following three statements are equivalent \begin{enumerate}
    \item $f \in N^+$.
    \item $f \in \{  \frac{f_1}{f_2} : f_2\text{ is outer } f_1, f_2 \in H^{\infty}  \}.$
    \item $f \in \{  \frac{f_1}{f_2} : f_2\text{ is outer } f_1, f_2 \in H^{1/2}  \}.$
    \item $f = b s_{\mu_1} f^o$, where $b$ is a Blaschke product, $s_{\mu_1}$ a singular inner function with respect to the measure $\mu_1$ and $f^o$ an outer function.
\end{enumerate}
\end{prop}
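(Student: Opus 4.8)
The plan is to establish the cycle $(1)\Rightarrow(2)\Rightarrow(3)\Rightarrow(1)$ together with the separate equivalence $(1)\Leftrightarrow(4)$, relying only on the topological algebra structure of $N^+$ recorded above, the outer-function construction used in the proof of Lemma~\ref{outerconstruct}, the Smirnov maximum principle, and the classical inner--outer factorization of Hardy-space functions \cite{duren1970theory}.

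For $(1)\Rightarrow(2)$, which is the heart of the argument, observe that the defining condition for $f\in N^+$ gives $\log(1+|f|)\in L^1(\mathbb{T})$. Feeding $k=-\log(1+|f|)$ into the outer-function representation of Lemma~\ref{outerconstruct} yields an outer function $v$ with $|v|=1/(1+|f|)\le 1$ on $\mathbb{T}$, so that $v\in H^\infty\subseteq N^+$. Since $N^+$ is an algebra, $fv\in N^+$, and its boundary values satisfy $|fv|=|f|/(1+|f|)\le 1$. By the Smirnov maximum principle an $N^+$ function with bounded boundary values lies in $H^\infty$, so $fv\in H^\infty$; then $f=(fv)/v$ displays $f$ as a quotient of $H^\infty$ functions with outer denominator, which is $(2)$. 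The step $(2)\Rightarrow(3)$ is immediate from $H^\infty\subseteq H^{1/2}$.

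For $(3)\Rightarrow(1)$ I would first check $H^{1/2}\subseteq N^+$: the elementary bound $\log(1+x)\le C\,x^{1/2}$ for $x\ge 0$ shows that $H^{1/2}$-convergence implies convergence in the metric $\rho$, and since $N^+$ is the $\rho$-closure of the analytic polynomials, every $H^{1/2}$ function lies in $N^+$. Writing $f=f_1/f_2$ with $f_1,f_2\in H^{1/2}$ and $f_2$ outer, the reciprocal $1/f_2$ is again outer (its log-modulus $-\log|f_2|$ lies in $L^1$ since $f_2$ is outer) and hence lies in $N^+$, so $f=f_1\cdot(1/f_2)\in N^+$ by the algebra property. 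For the equivalence $(1)\Leftrightarrow(4)$, the implication $(4)\Rightarrow(1)$ is clear since $b,s_{\mu_1}\in H^\infty$ and $f^o$ is outer, so all three lie in the algebra $N^+$; conversely, from $(1)\Rightarrow(2)$ write $f=f_1/f_2$ with $f_1,f_2\in H^\infty$ and $f_2$ outer, factor $f_1=I f_1^o$ into inner and outer parts, and split the inner factor as $I=b\,s_{\mu_1}$ (Blaschke product times singular inner function). Then $f=b\,s_{\mu_1}(f_1^o/f_2)$, where $f_1^o/f_2$ is a quotient of outer functions and therefore outer, giving $(4)$.

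The main obstacle is precisely the Smirnov maximum principle invoked in $(1)\Rightarrow(2)$: it is the one nontrivial ingredient and is exactly the property that separates $N^+$ from the larger Nevanlinna class $N$, for which the denominator in $(2)$ need not be outer. The remaining steps are routine bookkeeping with the canonical factorization and the algebra structure of $N^+$.
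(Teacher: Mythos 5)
Your chain of implications is structured quite differently from the paper's, and most of the individual steps are sound, but there is one substantive logical problem: the use of the Smirnov maximum principle in $(1)\Rightarrow(2)$. In this paper that principle --- $N^+\cap L^p=H^p$, stated immediately after the proposition --- is \emph{deduced from} characterisation (4) of this very proposition; and its textbook proof (Duren, Theorem 2.11) likewise rests on the canonical factorisation of $N^+$ functions, i.e.\ on the equivalence $(1)\Leftrightarrow(4)$. Since you then obtain $(1)\Rightarrow(4)$ by routing through $(1)\Rightarrow(2)$, your derivation of the factorisation presupposes a theorem whose standard proof \emph{is} the factorisation. This is not formally circular if you import the Smirnov principle wholesale as a black box from the literature, but then the one nontrivial ingredient of the proposition is being assumed rather than proved, and within this paper it would also force a reordering, since the remark $N^+\cap L^p=H^p$ could no longer be presented as a consequence of the proposition. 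To repair this you would either need a proof of the maximum principle independent of $(1)\Leftrightarrow(4)$, or you should anchor $(1)\Leftrightarrow(4)$ in the literature directly, which is exactly what the paper does.

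For comparison, the paper's route avoids the maximum principle altogether: it cites the argument of Duren's Theorem 2.10 for $(1)\Leftrightarrow(4)$, gets $(2)\Rightarrow(3)$ for free, gets $(3)\Rightarrow(4)$ from inner--outer factorisation together with the fact that quotients of outer functions are outer, and proves $(4)\Rightarrow(2)$ by constructing outer functions $F_1,F_2$ with $|F_1|=\min(1,|f|)$ and $|F_2|=\min(1,|f|^{-1})$; these have boundary modulus at most $1$, hence lie in $H^\infty$ by the Poisson representation of outer functions alone, and then $f=b\,s_{\mu_1}F_1/F_2$. Your ``flattening'' construction (an outer $v$ with $|v|=1/(1+|f|)$, so that $fv$ is a candidate $H^\infty$ function) is the classical device, and it is precisely at the step $fv\in N^+\cap L^\infty\Rightarrow fv\in H^\infty$ that the deep ingredient enters; the paper's $\min(1,|f|)$ trick is designed so that the bounded functions it produces are outer \emph{by construction}, which is what keeps its argument elementary. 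Your $(3)\Rightarrow(1)$ step (via $\log(1+x)\le\sqrt{x}$, polynomial density in $H^{1/2}$, and the algebra property of $N^+$) is fine, though note that it also quietly uses that outer functions lie in $N^+$ (for $1/f_2$), another standard fact that deserves justification or a citation when $N^+$ is taken with definition (1).
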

\begin{proof}

Following the argument laid out in the proof of Theorem 2.10 \cite{duren1970theory} shows the equivalence of 1 and 4. $2 \implies 3$ is immediate. $3 \implies 4$ follows from the fact that the reciprocal of an outer function is outer and so is the product of two outer functions. We now show 4 implies 2 to show all the statements are equivalent.

We construct two outer functions $F_1, F_2$ such that $|F_1| = {min(1, |f|)}$, and $|F_2| = {min(1, |f|^{-1})}$, as in Lemma \ref{outerconstruct} we only need to prove that $\log({min(1, |f|)})$ and $\log({min(1, |f|^{-1})})$ are in $L^1$ in order to do this. Using the radial limits of $f$ we can define $f$ a.e on $\mathbb{T}$, we define $E := \{ z \in \mathbb{T} : |f(z)|>1 \}$ and $F := \{ z \in \mathbb{T} : |f(z)| \leqslant 1 \}$. Then 
$$
\int_{\mathbb{T}}  \log(min(1, |f|))  = \int_{E} \log(min(1, |f|)) + \int_{F} \log(min(1, |f|)) = 0 + \int_{F} \log|f|.
$$ As $|f|$ is log integrable over the whole of $\mathbb{T}$ it is also log integrable over any subset of $\mathbb{T}$, so the expression above shows $\log {min(1, |f|)} \in L^1$. A similar computation shows $min(1, |f|^{-1})$ is log integrable, it then follows that $F_1, F_2 \in   H^{\infty}$. As $|{F_2}||f| = |{F_1}|$ a.e on $\mathbb{T}$, taking outer factors we can conclude $f^o = \frac{F_1}{F_2}$ so $f = \frac{b s_{\mu_1} F_1}{F_2}$.
\end{proof}
Notice from the fourth characterisation of $N^+$ from above, that if $f \in N^+$ and the boundary function is in $L^p$, then $ f \in H^p$, i.e., $N^+ \cap L^p = H^p$.

\vskip 0.5cm
We present the main Theorem of this section.
\begin{thm}\label{kmin}
Let $u$ be an outer function such that $|u| = |\phi_1| + \hdots + |\phi_n| + 1$, where $ \phi_1 \hdots \phi_n \in H^p$, then 
$$
\kappa_{min}
\begin{pmatrix}
\phi_1 \\
\vdots \\
\phi_n
\end{pmatrix} = \ker T_{
\begin{pmatrix}
\overline{\phi_1} \overline{z} / \phi_1^o & 0 & \hdots & \hdots & \hdots & \hdots & 0 \\
- \phi_2 / u & \phi_1 / u & 0 & \hdots & \hdots & \hdots &  0  \\
- \phi_3 / u & 0 & \phi_1 / u & 0 & \hdots & \hdots &  0  \\
- \phi_4 / u & 0 & 0 & \phi_1 / u & 0 & \hdots & 0  \\
\vdots & \vdots & \vdots & \vdots & \vdots & \vdots &  \vdots  \\
- \phi_n / u & 0 & \hdots & \hdots & \hdots & 0 &  \phi_1 / u \\
\end{pmatrix}} .
$$
\end{thm}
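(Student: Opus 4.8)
The plan is to verify the two defining properties of a minimal kernel in turn: that the vector $\phi=\begin{pmatrix}\phi_1&\cdots&\phi_n\end{pmatrix}^T$ lies in $\ker T_G$, and that $\ker T_G\subseteq\ker T_H$ for every bounded symbol $H$ with $\phi\in\ker T_H$. Throughout I read $\psi\in\ker T_G$ as $G\psi\in\overline{H^p_0}(\mathbb{D},\mathbb{C}^n)$, which by the direct sum decomposition means that each component of $G\psi$ separately lies in $\overline{H^p_0}$. (I assume $\phi_1\not\equiv 0$, so that the top-left entry $\overline{\phi_1}\,\overline{z}/\phi_1^o$ is meaningful; the case $\phi_1\equiv 0$ is degenerate and can be treated by reordering.) Membership is then a direct computation: on $\mathbb{T}$ the first component of $G\phi$ is $(\overline{\phi_1}\,\overline{z}/\phi_1^o)\phi_1=\overline{z}\,\overline{\phi_1}\,\phi_1^i=\overline{z\phi_1^o}\in\overline{H^p_0}$ (using $\phi_1/\phi_1^o=\phi_1^i$ and $|\phi_1^i|=1$), while each lower component is $(\phi_1\phi_i-\phi_i\phi_1)/u=0$. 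Hence $\phi\in\ker T_G$.

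The engine of the argument is an explicit description of $\ker T_G$. Reading the rows, $\psi\in\ker T_G$ precisely when (a) $\psi_1\in\ker T_{\overline{\phi_1}\overline{z}/\phi_1^o}=\kappa_{min}(\phi_1)$, and (b) $(\phi_1\psi_i-\phi_i\psi_1)/u\in\overline{H^p_0}$ for $2\le i\le n$. The decisive point is that (b) collapses to a pointwise identity. Indeed $\phi_1\psi_i,\phi_i\psi_1\in H^{p/2}\subseteq N^+$, and since $u$ is outer and $N^+$ is closed under division by outer functions, $F_i:=(\phi_1\psi_i-\phi_i\psi_1)/u\in N^+$. But $F_i\in\overline{H^p_0}\subseteq L^p$, so by $N^+\cap L^p=H^p$ we get $F_i\in H^p\cap\overline{H^p_0}=\{0\}$, whence $\phi_1\psi_i=\phi_i\psi_1$, i.e. $\psi_i=(\phi_i/\phi_1)\psi_1$ for all $i$.

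For minimality, take $H$ with $\phi\in\ker T_H$ and an arbitrary $\psi\in\ker T_G$; I must show $(H\psi)_i\in\overline{H^p_0}$ for each $i$. Using $\psi_j=(\phi_j/\phi_1)\psi_1$ I factor $(H\psi)_i=(\psi_1/\phi_1)\,(H\phi)_i$, and $(H\phi)_i\in\overline{H^p_0}$ by hypothesis. Write $\rho=\psi_1/\phi_1^o\in N^+$, set $w_i=(H\phi)_i$ with $v_i=\overline{w_i}\in H^p_0$, and let $k=z\phi_1\overline{\rho}\in H^p_0$, which is exactly the scalar kernel condition furnished by (a). Conjugating, $\overline{(H\psi)_i}=\overline{\rho}\,\phi_1^i v_i=k\,v_i/(z\phi_1^o)$ on $\mathbb{T}$, and this is manifestly in $N^+$ (a product of $H^p_0$-functions divided by $z$ and by the outer $\phi_1^o$). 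The crux is integrability: on $\mathbb{T}$ one has $|k|=|\psi_1|$, so $|k\,v_i/(z\phi_1^o)|=|\psi_1|\,|w_i|/|\phi_1|\le C\sum_j|\phi_j|\,|\psi_1|/|\phi_1|=C\sum_j|\psi_j|\in L^p$, where $C$ bounds the entries of $H$. Therefore $k\,v_i/(z\phi_1^o)\in N^+\cap L^p=H^p$, and since it vanishes at $0$ it lies in $H^p_0$, giving $(H\psi)_i\in\overline{H^p_0}$. Hence $\psi\in\ker T_H$ and $\ker T_G\subseteq\ker T_H$.

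I expect the minimality step, and within it the integrability estimate, to be the main obstacle. The tempting bound $|w_i|\le C|u|$ is too weak, because the additive $+1$ in $|u|$ spoils the cancellation; what makes the argument work is that $|w_i|=\bigl|\sum_j h_{ij}\phi_j\bigr|\le C\sum_j|\phi_j|$ with no additive constant, so that $|\psi_1|\,|w_i|/|\phi_1|$ telescopes into $C\sum_j|\psi_j|$. Reaching this requires first extracting the structural facts (a) and (b), then rewriting the conjugate of $(H\psi)_i$ through the scalar kernel relation $z\phi_1\overline{\rho}\in H^p_0$ so that membership in $N^+$ becomes visible; the identity $N^+\cap L^p=H^p$ is then what forces the product into the correct half-space $\overline{H^p_0}$.
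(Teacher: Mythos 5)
Your proof is correct and takes essentially the same route as the paper's: you use rows $2$ to $n$ of $G$ to force $\phi_1\psi_i=\phi_i\psi_1$ (so that every $\psi\in\ker T_G$ is a scalar multiple $(\psi_1/\phi_1)\,\phi$ of the given vector, with $\psi_1/\phi_1=\overline{p}/\overline{\phi_1^o}$ coming from the first row's scalar minimal-kernel condition), and then factor $H\psi=(\psi_1/\phi_1)H\phi$ and conclude via $\overline{zN^+}\cap L^p=\overline{H^p_0}$, exactly as in the paper's appeal to Proposition \ref{quotientouter}. The only cosmetic differences are that you check $\phi\in\ker T_G$ explicitly and replace the paper's direct observation that $H$ bounded and $\psi\in L^p$ give $H\psi\in L^p$ by an equivalent hand-computed pointwise estimate.
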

\begin{proof}
We denote the above symbol by $G$. We have to show that if $\begin{pmatrix}
\phi_1 &
\hdots &
\phi_n
\end{pmatrix}^T \in \ker T_H$, for any bounded $n$-by-$n$ matrix $H$, then every $\begin{pmatrix}
f_1 &
\hdots &
f_n
\end{pmatrix}^T \in \ker T_G
$
also lies in $\ker T_H$. To this end let $\begin{pmatrix}
\phi_1 &
\hdots &
\phi_n
\end{pmatrix}^T \in \ker T_H $, then
$$
\begin{pmatrix}
h_{11} & h_{12} & \hdots & h_{1n} \\
h_{21} & h_{22} & \hdots & h_{2n} \\
\vdots & \vdots & \vdots & \vdots \\
h_{n1} & h_{n2} & \hdots & h_{nn}
\end{pmatrix}
\begin{pmatrix}
\phi_1 \\
\vdots \\
\vdots \\
\phi_n
\end{pmatrix}
 =
 \begin{pmatrix}
\overline{zp_1} \\
\vdots \\
\vdots \\
\overline{zp_n}
\end{pmatrix},$$
for some $p_1 \hdots p_n \in H^p$, so that $\phi_1 h_{i1} + \phi_2 h_{i2} + \hdots + \phi_n h_{in} = \overline{zp_i} $ for each $ i \in \{ 1 \hdots n \}$. Let $ \begin{pmatrix}
f_1 &
\hdots &
f_n
\end{pmatrix}^T \in \ker T_{G}$, then $f_1 = \frac{\phi_1 \overline{p}}{\overline{\phi_1^o}}
$ for some $p \in H^p$. Rows $2$ to $n$ of $G$ take values in $N^+ \cap L^{\infty} = H^{\infty}$, so from row $i \in \{ 2 \hdots n \}$ in $G\begin{pmatrix}
f_1 &
\hdots &
f_n
\end{pmatrix}^T \in \overline{H^p_0( \mathbb{D}, \mathbb{C}^n)}$, taking in to account that $H^p \cap \overline{H^p_0} = \{ 0 \}$, we deduce
$$
f_1 \frac{\phi_i}{u} = f_i \frac{\phi_1}{u}.
$$
Substituting our value for $f_1$ we can write $f_i$ as,
$$
f_i = \frac{\phi_i \overline{p} }{\overline{\phi_1^o}},
$$
so
$$
\begin{pmatrix}
{f_1} \\
\vdots \\
{f_n}
\end{pmatrix} = 
 \frac{\overline{p} }{\overline{\phi_1^o}}
\begin{pmatrix}
\phi_1 \\
\vdots \\
\phi_n
\end{pmatrix},
$$
and hence
$$
H\begin{pmatrix}
{f_1} \\
\vdots \\
{f_n}
\end{pmatrix} = H 
\begin{pmatrix}
\phi_1 \\
\vdots \\
\phi_n
\end{pmatrix}\frac{\overline{p} }{\overline{\phi_1^o}} = \begin{pmatrix}
\overline{zp_1} \\
\vdots \\
\overline{zp_n} 
\end{pmatrix}\frac{\overline{p} }{\overline{\phi_1^o}}.
$$
Proposition \ref{quotientouter} shows $\overline{zp_i} \frac{\overline{p}}{\overline{\phi_1^o}} \in \overline{zN^+} \cap L^p = \overline{H^p_0}$, so we conclude
$$
\begin{pmatrix}
{f_1} \\
\vdots \\
{f_n}
\end{pmatrix} \in \ker T_H.
$$
\end{proof}

\begin{rem}
The above symbol for the minimal kernel is not unique. In fact we can show there are at least $n$ different symbols (not including permuting the rows of the symbol) which represent the same kernel, each depending on the minimal kernel in the scalar case, of $\phi_j ,$ where $ j \in \{ 1 \hdots n \}$. Consider the symbol
$$
\begin{pmatrix}
0 & \hdots & \hdots & \hdots & \hdots & \hdots & \overline{\phi_j} \overline{z} / \phi_j^o & 0 & \hdots &   0 \\
0 & \phi_j / u & 0 & \hdots & \hdots & \hdots &  - \phi_2 / u & 0 &  \hdots    &  0  \\
0 & 0 & \phi_j / u & 0 & \hdots & \hdots & - \phi_3 / u & 0  &  \hdots & 0  \\
0 & 0 & 0 & \phi_j / u & 0 & \hdots & - \phi_4 / u & 0 & \hdots &   0  \\
\vdots & \vdots & \vdots & \vdots & \vdots & \vdots &  \vdots &  \vdots &  \vdots &  \vdots    \\
 \phi_j / u & 0 & \hdots & \hdots & \hdots & 0 &  - \phi_1 / u & 0 & \hdots & 0  \\
0 & \hdots & \hdots & \hdots & \hdots & \hdots & \phi_{j+1} / u & \phi_j / u & 0  & 0 \\ 
\vdots & \vdots & \vdots & \vdots & \vdots & \vdots &  \vdots &  \vdots &  \vdots &  \vdots    \\
0 & \hdots & \hdots & \hdots & \hdots & \hdots & \phi_n / u & 0 & \hdots  & \phi_j / u \\
\end{pmatrix},
$$
where the first non zero entry on the first row is in the $j$'th column, and the row where the first entry is non zero is the $j$'th row. This can also be checked to be a symbol for the minimal kernel.
\end{rem}

\section{Maximal functions for $\ker T_G$}
Recall that $\phi$ is a maximal function for a Toeplitz kernel $K$ if $\kappa_{min}(\phi) = K$. Unlike the scalar case not all matricial Toeplitz kernels have a maximal function. A simple explicit example to show this is 
$$
\ker T_{\begin{pmatrix}
\overline{z} & 0 \\
0 & \overline{z} \\
\end{pmatrix}} = \{
\begin{pmatrix}
\lambda \\
\mu \\
\end{pmatrix} : \lambda,  \mu \in \mathbb{C} \}.
$$
Suppose some fixed $\begin{pmatrix}
\lambda_1 \\
\mu_1 \\
\end{pmatrix} \in \mathbb{C}^2$ give a maximal function, then $$\begin{pmatrix}
\lambda_1 \\
\mu_1 \\
\end{pmatrix} \in \ker T_{\begin{pmatrix}
\mu_1 & -\lambda_1 \\
0 & 0 \\
\end{pmatrix}} = \begin{pmatrix}
\lambda_1 \\
\mu_1 \\
\end{pmatrix} H^p ,$$ but 
$$
\ker T_{\begin{pmatrix}
\overline{z} & 0 \\
0 & \overline{z} \\
\end{pmatrix}} \not\subseteq  \ker T_{\begin{pmatrix}
\mu_1 & -\lambda_1 \\
0 & 0 \\
\end{pmatrix}},
$$
so $
\ker T_{\begin{pmatrix}
\overline{z} & 0 \\
0 & \overline{z} \\
\end{pmatrix}} $ can not have a maximal function.

We can build on this example to give a condition for when Toeplitz kernels do not have a maximal function.

We use the notation $\ker T_G(0):= \{ f(0) : f \in \ker T_G \} $. For a matrix $A$ with each entry of $A$ being a holomorphic function in the disc we write $A(0)$ to mean $A$ with each entry evaluated at $0$.
\begin{thm}\label{negative max}
If $\ker T_G$ is such that  $\dim \ker T_G(0) > 1$ then $\ker T_G$ does not have a maximal function.
\end{thm}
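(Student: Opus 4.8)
The plan is to argue by contradiction: suppose $\ker T_G$ does admit a maximal function $\phi = (\phi_1, \dots, \phi_n)^T$, so that $\ker T_G = \kappa_{min}(\phi)$. Since $\dim \ker T_G(0) > 1$ forces $\ker T_G \neq \{0\}$ we may take $\phi \neq 0$, and after a permutation of coordinates (permitted by the Remark following Theorem \ref{kmin}) we may assume $\phi_1 \neq 0$. The essential input is the description of $\kappa_{min}(\phi)$ coming from the proof of Theorem \ref{kmin}: every $f \in \kappa_{min}(\phi)$ is of the form $f = \frac{\overline{p}}{\overline{\phi_1^o}}\,\phi$ for some $p \in H^p$. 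Thus each $f \in \ker T_G$ is a scalar multiple $f = g\phi$ of the one fixed vector $\phi$, where the scalar $g = \overline{p}/\overline{\phi_1^o}$ does not depend on the coordinate (only on $f$).

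From this proportionality I would extract information at the origin. For a fixed $f = g\phi$ and any indices $i, j$ we have $f_i \phi_j = g\phi_i\phi_j = f_j\phi_i$ as boundary functions on $\mathbb{T}$. Both sides lie in $N^+$, which is an algebra, so this is an equality in the Smirnov class; as Smirnov functions are determined by their boundary values it holds throughout $\mathbb{D}$, and evaluating at $0$ gives $f_i(0)\phi_j(0) = f_j(0)\phi_i(0)$ for all $i, j$. Equivalently, the vectors $f(0)$ and $\phi(0)$ in $\mathbb{C}^n$ are linearly dependent. When $\phi(0) \neq 0$ this immediately yields $f(0) \in \mathbb{C}\phi(0)$ for every $f$, so $\ker T_G(0) \subseteq \mathbb{C}\phi(0)$ is at most one-dimensional, contradicting the hypothesis.

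The step requiring care — and the one I expect to be the real obstacle — is the degenerate case $\phi(0) = 0$, in which the minor relation reads $0 = 0$ and gives nothing. To repair this I would first strip off the common inner part: set $\delta = GCD(\phi_1^i, \dots, \phi_n^i)$ and write $\phi = \delta\chi$ with $\chi = (\chi_1, \dots, \chi_n)^T \in H^p(\mathbb{D}, \mathbb{C}^n)$ having no common inner divisor. Then $\chi(0) \neq 0$, since otherwise $z$ would divide every $\chi_i$ and would be a common inner factor. As $f = g\phi = (g\delta)\chi$, the same cross-multiplication now gives $f_i\chi_j = f_j\chi_i$ on $\mathbb{D}$, hence $f_i(0)\chi_j(0) = f_j(0)\chi_i(0)$, so $f(0)$ is proportional to the \emph{nonzero} vector $\chi(0)$. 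Therefore $\ker T_G(0) \subseteq \mathbb{C}\chi(0)$ is one-dimensional in every case, contradicting $\dim \ker T_G(0) > 1$ and proving that no maximal function can exist. The only routine verifications left are that $\chi_i \in H^p$ after removing the inner factor $\delta$ and that products of $H^p$ functions lie in $N^+$, so that a.e.\ boundary equalities upgrade to genuine identities on the disc.
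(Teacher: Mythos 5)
Your proof is correct, but it follows a genuinely different route from the paper's. The paper never invokes Theorem \ref{kmin}: it works purely with the defining property of a maximal function $v$, choosing coordinates $i<j$ in which two vectors of $\ker T_G(0)$ are linearly independent, building an auxiliary bounded symbol $H$ whose single nonzero row has entries $v_j/(z^N u)$ and $-v_i/(z^N u)$ in columns $i$ and $j$ (with $N$ the largest power of $z$ dividing both $v_i$ and $v_j$, and $u$ outer with $|u|=|v_i|+|v_j|+1$), noting $v\in\ker T_H$ so that maximality gives $\ker T_G\subseteq\ker T_H$, and then reading off from $f_i v_j/(z^N u)=f_j v_i/(z^N u)$ a nontrivial linear relation between $f_i(0)$ and $f_j(0)$ for every $f\in\ker T_H$ --- contradicting the choice of $i,j$. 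You instead use the identification $\ker T_G=\kappa_{min}(\phi)=\ker T_{G'}$ with the explicit symbol $G'$ of Theorem \ref{kmin}, extract from it that every element of $\ker T_G$ is a pseudo-scalar multiple $(\overline{p}/\overline{\phi_1^o})\,\phi$ of the single vector $\phi$, and conclude that $\ker T_G(0)$ lies in the line $\mathbb{C}\chi(0)$, where $\chi$ is $\phi$ with its common inner divisor removed. The degenerate-case fixes are the same trick in different clothing: the paper divides the two selected coordinates by $z^N$, you divide all coordinates by $GCD(\phi_1^i,\hdots,\phi_n^i)$. What the paper's argument buys is self-containedness --- it needs only the definition of maximality and the fact $H^p\cap\overline{H^p_0}=\{0\}$; what yours buys is the sharper structural fact that $\dim\kappa_{min}(\phi)(0)\leqslant 1$ for \emph{every} $\phi$, which anticipates Lemma \ref{scalar type} and the scalar-type corollary of the $p=2$ section, at the cost of leaning on the representation of elements of $\ker T_{G'}$ obtained inside the proof (not merely the statement) of Theorem \ref{kmin}. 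Two small points to tidy: if some $\phi_i\equiv 0$, the GCD should be taken over the nonzero coordinates only (the cross-multiplication argument is unaffected), and the reduction to $\phi_1\neq 0$ should note that conjugating by a permutation matrix preserves Toeplitz kernels, their values at $0$, and maximal functions.
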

\begin{proof}
Suppose for contradiction $\ker T_G$ is such that  $\dim \ker T_G(0) > 1$ and $\ker T_G$ has a maximal function
$$
v = \begin{pmatrix}
v_1 \\
\vdots \\
v_n
\end{pmatrix},
$$
then for any symbol $H$ if $v \in \ker T_H $, we must have $\ker T_G \subseteq \ker T_H$. Let
$$
x = \begin{pmatrix}
x_1 \\
\vdots \\
x_n
\end{pmatrix},
y = \begin{pmatrix}
y_1 \\
\vdots \\
y_n
\end{pmatrix},
$$
be two linearly independent vectors in $\ker T_G(0)$. Pick $i , j \in \{ 1 ... n \}, i < j $ such that 
$$
\begin{pmatrix}
0 \\
\vdots \\
x_i \\
0 \\
\vdots \\
0 \\
x_j \\
0 \\
\vdots \\
0
\end{pmatrix}, \begin{pmatrix}
0 \\
\vdots \\
y_i \\
0 \\
\vdots \\
0 \\
y_j \\
0 \\
\vdots \\
0
\end{pmatrix},
$$
span a two dimensional subspace of $\mathbb{C}^n$.
Let $n$ be the largest integer such that $\frac{v_i}{z^n}$ and $\frac{v_j}{z^n}$ lie in $H^p$, let $u$ be an outer function such that $|u| = |v_i| + |v_j| +1$, and let $$
H = \begin{pmatrix}
0 & \hdots & 0 & \frac{v_j}{z^n u } & 0 & \hdots & 0 & - \frac{v_i}{z^n u } & 0 & \hdots & 0 \\
0 & \hdots & 0 & 0 & 0 & \hdots & 0 & 0 & 0 & \hdots & 0 \\
\vdots & \vdots & \vdots & \vdots & \vdots & \vdots & \vdots & \vdots & \vdots & \vdots \\
0 & \hdots & 0 & 0 & 0 & \hdots & 0 & 0 & 0 & \hdots & 0 \\ 
\end{pmatrix},
$$where the first non zero entry is in the i'th column and the second is in the $j$'th column. As $\frac{v_i}{z^n u}, \frac{v_j}{z^n u} \in L^{\infty} \cap N^+ = H^{\infty}$ each entry of $H$ takes values in $H^{\infty}$, furthermore $v \in \ker T_H$, so 
$$
\ker T_G \subseteq \ker T_H,
$$
which means that
$$
\ker T_G(0) \subseteq \ker T_H(0).
$$
For $\begin{pmatrix}
f_1 &
\hdots &
f_n
\end{pmatrix}^T \in \ker T_H$ we have $f_i \frac{v_j}{z^n u} = f_j\frac{v_i}{z^n u} $, and by dividing $v_i, v_j$ by $z^n$, we have ensured there is a linear relation between $f_i(0)$ and $f_j(0)$. So the $i$'th and $j$'th coordinate of $\ker T_H(0)$ only span a one dimensional subspace of $\mathbb{C}^n$, but we have picked $ i, j$ so that the $i$'th and $j$'th coordinate of $\ker T_G(0)$ span a two dimensional subspace of $\mathbb{C}^n$, which is a contradiction. So we conclude that maximal functions do not exist whenever  $ \dim \ker T_G(0) > 1 $.
\end{proof}

We now aim to generalise Dyakonov's decomposition of Toeplitz kernels which is Theorem 1 in \cite{dyakonov2000kernels} to a matrix setting, we will then use this result to further study maximal functions. In the case of $p=2$ Theorem 7.4 of \cite{MR2736342} presents a similar formula to what we will obtain.

We define $L^{\infty}( \mathcal{L}(\mathbb{C}^n))$ to be the space of all $n$-by-$n$ matrices with each entry taking values in $L^{\infty}$, we analogously define $H^{\infty}( \mathcal{L}(\mathbb{C}^n))$ and $N^{+}( \mathcal{L}(\mathbb{C}^n))$. For a $n$-by-$n$ matrix inner function $I$ we denote the $B$-invariant subspace $\ker T_{I^*} = I (\overline{H^p_0(\mathbb{D}, \mathbb{C}^n}) \cap H^p(\mathbb{D}, \mathbb{C}^n)$ by $K_{I}$. $K_{I}$ can easily be checked to be $B$-invariant by noting if $ I^* {f} \in \overline{H^p_0(\mathbb{D}, \mathbb{C}^n}) $, then $I^* f(0) \in \overline{H^p (\mathbb{D}, \mathbb{C}^n})$ and so $I^* ({f - f(0)}) \in \overline{H^p(\mathbb{D}, \mathbb{C}^n})$, which implies $ I^* \frac{f - f(0)}{z} = I^* {B(f)} \in \overline{H^p_0(\mathbb{D}, \mathbb{C}^n})$.

For a symbol $G$, if $\det G$ is an invertible element in $L^{\infty}$ then $\int_{\mathbb{T}} \log{\frac{1}{|\det G(z)|} } dm(z) < \infty$, and so $\int_{\mathbb{T}} \log{{|\det G(z)|} } dm(z)  = -\int_{\mathbb{T}} \log{\frac{1}{|\det G(z)|} } dm(z) > - \infty$. This means we can construct a scalar outer function $q$ such that $| \det G | = |q| $.
\begin{lem}
Let $G \in L^{\infty}( \mathcal{L}(\mathbb{C}^n))$ be such that $\det G$ is invertible in $L^{\infty}$ and let $q$ be the outer function such that $| \det G | = |q| $. Then if we define $G^' \in  L^{\infty}( \mathcal{L}(\mathbb{C}^n))$ to be the matrix $G$ with the first row divided by $\overline{q}$, we have $\ker T_G = \ker T_{G^'}$. Furthermore $\det G^'$ is unimodular.
\end{lem}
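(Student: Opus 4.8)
The plan is to exploit that $G$ and $G'$ differ only in their first row, so the kernel conditions coincide on coordinates $2,\ldots,n$ and the whole problem reduces to the first coordinate. Writing $f \in \ker T_G$ iff $Gf \in \overline{H^p_0}(\mathbb{D},\mathbb{C}^n)$, and setting $g_1 := (Gf)_1 = \sum_j G_{1j} f_j$, I note that $G'f$ has the same entries as $Gf$ except the first, which becomes $g_1/\overline{q}$. Since rows $2$ through $n$ of $G$ and $G'$ agree, the identity $\ker T_G = \ker T_{G'}$ will follow once I establish the scalar equivalence
$$
g_1 \in \overline{H^p_0} \iff \frac{g_1}{\overline{q}} \in \overline{H^p_0}.
$$

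First I would record the properties of $q$ that drive everything. As $q$ is outer, both $q$ and $1/q$ lie in $N^+$, since the reciprocal of an outer function is outer (as used in Proposition \ref{quotientouter}) and every outer function lies in $N^+$. Because $\det G$ is invertible in $L^\infty$, the equality $|q| = |\det G|$ shows $|q|$ is bounded below a.e.\ on $\mathbb{T}$, so $1/|q| \in L^\infty$; together with $|q| \in L^\infty$ this yields $q \in H^\infty$ and the two-sided bound needed to carry $L^p$-membership across division by $\overline{q}$. I would also use the identity $\overline{H^p_0} = \overline{zN^+} \cap L^p$ already invoked in the proof of Theorem \ref{kmin}, so that $g_1 \in \overline{H^p_0}$ is equivalent to $\overline{g_1} \in zN^+$ together with $g_1 \in L^p$; here $g_1 \in L^p$ holds automatically because $G_{1j} \in L^\infty$ and $f_j \in H^p$.

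For the equivalence itself: if $\overline{g_1} \in zN^+$, then $\overline{g_1/\overline{q}} = \overline{g_1}/q = \overline{g_1}\cdot(1/q) \in zN^+$, since $1/q \in N^+$ and $zN^+$ is closed under multiplication by elements of the algebra $N^+$; moreover $|g_1/\overline{q}| \leqslant \|1/q\|_\infty |g_1| \in L^p$, so $g_1/\overline{q} \in \overline{H^p_0}$. Conversely, if $\overline{g_1}/q \in zN^+$, then multiplying by $q \in N^+$ gives $\overline{g_1} \in zN^+$, and $g_1 \in L^p$ as noted, so $g_1 \in \overline{H^p_0}$. This proves $\ker T_G = \ker T_{G'}$.

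Finally, the claim about $\det G'$ is immediate from multilinearity of the determinant in the rows: dividing the first row of $G$ by $\overline{q}$ scales the determinant, giving $\det G' = (\det G)/\overline{q}$, whence $|\det G'| = |\det G|/|q| = 1$ a.e.\ on $\mathbb{T}$. The only genuinely delicate point is the first-coordinate equivalence, where I must simultaneously preserve Smirnov-class membership, handled by closure of $N^+$ under division by the outer function $q$, and $L^p$-integrability, handled by the invertibility of $\det G$, which bounds $1/|q|$; everything else is bookkeeping.
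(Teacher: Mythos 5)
Your proof is correct and follows essentially the same route as the paper's: both reduce the kernel equality to the first row and rest on the key fact that $q$, being outer with $|q|=|\det G|$ bounded above and below, is invertible in $H^{\infty}$ (your argument via $1/q \in N^+$ together with $1/|q| \in L^{\infty}$ is precisely a proof of that invertibility), so that division by $\overline{q}$ preserves membership in $\overline{H^p_0}$, and both dispatch the determinant claim by multilinearity in the rows. The paper merely states tersely what you spell out in detail.
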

\begin{proof}
We only need to consider the first row of $G^'$. Denote the first row of $G$ (respectively $G^'$) by $G_1$ (respectively $G_1^')$. As $q$ is invertible in $H^{\infty}$, for $f \in H^p(\mathbb{D},\mathbb{C}^n)$ we have $G_1 f \in \overline{H^p_0}$ if and only if $\frac{G_1}{\overline{q}} f \in \overline{H^p_0}$. The fact $\det G^'$ is unimodular is a result of linearity of the determinant in each row.
\end{proof}
Under the assumption that $\det G$ is invertible in $L^{\infty}$, by the argument laid out above we can assume without loss of generality that $\det G$ is actually unimodular. Theorem 4.2 of \cite{barclay2009solution} states we can now write $G$ as \begin{equation}\label{main}
    G = {G_2}^* {G_1} ,
\end{equation} with $G_1, G_2 \in H^{\infty}( \mathcal{L}(\mathbb{C}^n))$. Furthermore taking the determinant of our unimodular $G$ shows us that $1 = | \det{ G_2^*} | | \det{ G_1} |$ and so $\det{ G_2^*}$ and $\det{ G_1}$ are invertible in $L^{\infty}$, and it then follows ${ G_2^*}$ and $G_1$ are invertible in $ L^{\infty}( \mathcal{L}(\mathbb{C}^n))$.

By \eqref{main} under the assumptions above we can write $f \in \ker T_G$ if and only if $f \in H^p(\mathbb{D},\mathbb{C}^n)$ and $G_2^* G_1 f \in \overline{H^p_0(\mathbb{D},\mathbb{C}^n)}$ i.e $G_1 f \in \ker T_{G_2^*}$. However the following proposition shows the kernel of $T_{G_2^*}$ can be simplified.

\begin{prop}
If $G_2 \in H^{\infty}( \mathcal{L}(\mathbb{C}^n))$ then $\ker T_{G_2^*} = \ker T_{(G_2^i)^*}$.
\end{prop}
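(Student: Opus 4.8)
The plan is to factor $G_2$ into inner and outer parts and then reduce the claim to exactly the two facts that drive the scalar identity $\ker T_{\bar g}=\ker T_{\overline{g^i}}$: an outer factor is invertible inside $N^+$, and $N^+\cap L^p=H^p$ (recorded just after Proposition \ref{quotientouter}). Concretely, I would invoke the canonical (inner--outer) factorization $G_2=G_2^i G_2^o$, where $G_2^i\in H^\infty(\mathcal L(\mathbb C^n))$ is inner (so its boundary values are unitary a.e.\ on $\mathbb T$ and $\|(G_2^i)^*\|\le 1$ a.e.) and $G_2^o\in H^\infty(\mathcal L(\mathbb C^n))$ is outer; this needs $\det G_2\not\equiv 0$, which holds in our setting since $\det G_2$ is invertible. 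Taking adjoints gives $G_2^*=(G_2^o)^*(G_2^i)^*$, and I would prove the two inclusions separately, using throughout the bookkeeping identity $A^*\bar w=\overline{A^\top w}$ for a matrix $A$ and a vector $w$, which turns adjoints acting on conjugated vectors into transposes acting on the vectors.

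For the inclusion $\ker T_{(G_2^i)^*}\subseteq\ker T_{G_2^*}$, I would take $f$ with $(G_2^i)^* f=\bar w$, $w\in H^p_0(\mathbb D,\mathbb C^n)$, and compute $G_2^* f=(G_2^o)^*\bar w=\overline{(G_2^o)^\top w}$. Since $(G_2^o)^\top\in H^\infty(\mathcal L(\mathbb C^n))$ and $w\in H^p_0$, the product $(G_2^o)^\top w$ lies in $H^p$ and vanishes at the origin, so $G_2^* f\in\overline{H^p_0}$. This direction uses only boundedness and analyticity of the outer factor.

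The substance lies in the reverse inclusion $\ker T_{G_2^*}\subseteq\ker T_{(G_2^i)^*}$, where I would first record the structural fact that $\det G_2^o$ is scalar outer, hence non-vanishing on $\mathbb D$, so that $(G_2^o)^{-1}=\operatorname{adj}(G_2^o)/\det G_2^o$ is a genuine holomorphic matrix function on $\mathbb D$ whose entries lie in $N^+$ (using that $N^+$ is an algebra containing $H^\infty$ and that the reciprocal of a scalar outer function is outer). Then, given $f\in\ker T_{G_2^*}$, i.e.\ $(G_2^o)^*(G_2^i)^* f=\bar w$ with $w\in H^p_0$, I would set $g=(G_2^i)^* f$ and solve to get $g=((G_2^o)^*)^{-1}\bar w=\overline{((G_2^o)^{-1})^\top w}=:\bar v$. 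Each component of $v=((G_2^o)^{-1})^\top w$ is a sum of products of $N^+$ functions, so $v\in N^+(\mathbb D,\mathbb C^n)$, while $g=(G_2^i)^* f\in L^p$ because $G_2^i$ has unitary boundary values and $f\in L^p$; hence $v\in N^+\cap L^p=H^p(\mathbb D,\mathbb C^n)$. Finally, since $(G_2^o)^{-1}$ is holomorphic at $0$ and $w(0)=0$, we get $v(0)=0$, so $v\in H^p_0$ and $(G_2^i)^* f=\bar v\in\overline{H^p_0}$, which is $f\in\ker T_{(G_2^i)^*}$.

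I expect the main obstacle to be purely at the level of the matrix inner--outer factorization rather than in the kernel manipulation: one must verify that the outer factor $G_2^o$ is genuinely bounded, that its determinant is scalar outer (so that $(G_2^o)^{-1}\in N^+$ and is holomorphic at the origin), and that $G_2^i$ has unitary boundary values so that multiplication by $(G_2^i)^*$ preserves $L^p$. Once these properties of the Potapov factorization are secured, the argument is the verbatim vector analogue of the scalar case, driven entirely by the identity $N^+\cap L^p=H^p$.
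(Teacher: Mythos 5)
Your proof is correct, but it handles the crux of the argument differently from the paper. Both proofs start from the same factorization $G_2 = G_2^i G_2^o$, and your easy inclusion $\ker T_{(G_2^i)^*} \subseteq \ker T_{G_2^*}$ matches the paper's mechanism. The difference is in the reverse inclusion. The paper leans on the standing assumption inherited from the discussion around \eqref{main}: there $\det G$ is normalized to be unimodular, so $\det G_2$ is invertible in $L^{\infty}$; since $\det (G_2^o)$ is outer \emph{and} bounded away from zero, it is invertible in $H^{\infty}$, hence $(G_2^o)^*$ is invertible in $\overline{H^{\infty}(\mathcal{L}(\mathbb{C}^n))}$, and both inclusions follow in one line because multiplication by an invertible element of $\overline{H^{\infty}(\mathcal{L}(\mathbb{C}^n))}$ whose inverse is also in $\overline{H^{\infty}(\mathcal{L}(\mathbb{C}^n))}$ preserves $\overline{H^p_0(\mathbb{D},\mathbb{C}^n)}$. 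You never invoke that $L^{\infty}$-invertibility: you invert the outer factor only in $N^{+}(\mathcal{L}(\mathbb{C}^n))$ via the adjugate, and then recover $H^p$ membership of $v = ((G_2^o)^{-1})^{\top} w$ from $N^+ \cap L^p = H^p$, together with $w(0)=0$ and analyticity of $(G_2^o)^{-1}$ at the origin to get $v \in H^p_0$. What this buys you is genuine extra generality: your argument proves the proposition for any $G_2 \in H^{\infty}(\mathcal{L}(\mathbb{C}^n))$ with $\det G_2 \not\equiv 0$ (the minimal hypothesis for the inner--outer factorization to exist), whereas the paper's two-line proof is tied to the normalized setting. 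The cost is length and machinery, though the Smirnov-class trick you use is exactly the one the paper itself deploys elsewhere (e.g.\ in the proofs of Theorem \ref{kmin} and Corollary \ref{max function decomp}), so it is very much in the spirit of the paper. One small point of bookkeeping: the unitarity of the boundary values of $G_2^i$ is not really needed where you cite it, since $(G_2^i)^*$ has entries in $L^{\infty}$ and that already puts $(G_2^i)^* f$ in $L^p$.
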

Before we begin the proof we make a remark about inner-outer matrix factorisation. Definition 3.1 in \cite{katsnelson1997theory} of outer functions in $N^{+ }( \mathcal{L}(\mathbb{C}^n))$ is that $E \in N^{+}( \mathcal{L}(\mathbb{C}^n))$ is outer if an only if $\det E$ is outer in $N^+$. Theorem 5.4 of \cite{katsnelson1997theory} says that given a function $F \in N^{+}( \mathcal{L}(\mathbb{C}^n)) $ such that $\det{F}$ is not equal to the 0 function, there exist matrix functions $F^i$ inner and $F^o$ outer (respectively $F^{i'} , F^{o'}$) such that we may write $F$ as $F = F^i F^o $ (respectively $F = F^{o'} F^{i'} )$.

\begin{proof}
Since $\det (G_2^o)$ is outer in $H^{\infty}$ and invertible in $L^{\infty}$, it is invertible in $H^{\infty}$, so $ (G_2^o)^*  $ is invertible in $\overline{H^{\infty}( \mathcal{L}(\mathbb{C}^n))}$. Then, after writing $G_2$ as $G_2 = G_2^i G_2^o$, it immediately follows that $\ker T_{G_2^*} = \ker T_{(G_2^i)^*}$.

\end{proof}
The following theorem is the generalisation of Dyakonov's decomposition of Toeplitz kernels to a matrix setting.
\begin{thm}\label{dyakonov}
Using the decomposition for $G$ given in \eqref{main}, $$\ker T_G =  (G_1^{i'})^* \left( (G_1^{o'})^{-1} K_{G_2^i} \cap G_1^{i'} H^p(\mathbb{D},\mathbb{C}^n) \right).$$
\end{thm}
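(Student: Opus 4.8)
The plan is to take as my starting point the reduction already assembled just before the statement and then to unwind the factorization of $G_1$ by a single change of variable. By \eqref{main} together with the preceding discussion, $f \in \ker T_G$ if and only if $f \in H^p(\mathbb{D},\mathbb{C}^n)$ and $G_1 f \in \ker T_{G_2^*}$. The proposition immediately above gives $\ker T_{G_2^*} = \ker T_{(G_2^i)^*}$, and by the very definition of $K_I$ (applied to the matrix inner function $I = G_2^i$) we have $\ker T_{(G_2^i)^*} = K_{G_2^i}$. Hence the problem reduces to the clean equivalence that $f \in \ker T_G$ precisely when $f \in H^p(\mathbb{D},\mathbb{C}^n)$ and $G_1 f \in K_{G_2^i}$.

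Next I would factor $G_1 = G_1^{o'} G_1^{i'}$ (the outer-times-inner factorization from the remark preceding the proposition) and introduce the substitution $h = G_1^{i'} f$. The point is that this substitution converts the two defining conditions into the two sets being intersected. The condition $G_1 f \in K_{G_2^i}$ reads $G_1^{o'} h \in K_{G_2^i}$, i.e.\ $h \in (G_1^{o'})^{-1} K_{G_2^i}$; and the condition $f \in H^p(\mathbb{D},\mathbb{C}^n)$ reads exactly $h \in G_1^{i'} H^p(\mathbb{D},\mathbb{C}^n)$. Since $G_1^{i'}$ is inner, it is pointwise unitary on $\mathbb{T}$, so $f \mapsto h = G_1^{i'} f$ is injective with inverse given by left multiplication by $(G_1^{i'})^*$; thus $f = (G_1^{i'})^* h$ and, running both implications, $\ker T_G$ is exactly the image under $(G_1^{i'})^*$ of those $h$ lying in $(G_1^{o'})^{-1} K_{G_2^i} \cap G_1^{i'} H^p(\mathbb{D},\mathbb{C}^n)$, which is the asserted identity.

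The steps requiring care, and what I regard as the main obstacle, are the two well-definedness points that make the displayed formula meaningful. First, I must justify that $(G_1^{o'})^{-1}$ exists in $H^{\infty}( \mathcal{L}(\mathbb{C}^n))$: this follows by the same reasoning used for $G_2^o$ in the proposition, since $\det G_1$ is invertible in $L^{\infty}$ and $\det G_1^{i'}$ is inner, forcing $\det G_1^{o'}$ to be invertible in $L^{\infty}$ and hence (being outer) invertible in $H^{\infty}$. Second, I must verify that the inner factor $G_1^{i'}$ genuinely yields the bijection $f \leftrightarrow h$ with $(G_1^{i'})^* G_1^{i'} = I$ a.e.\ on $\mathbb{T}$, so that applying $(G_1^{i'})^*$ to an element of $G_1^{i'} H^p(\mathbb{D},\mathbb{C}^n)$ recovers a genuine $H^p$ function and the two conditions on $h$ can be imposed simultaneously. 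Once these are in hand, the containment-both-ways argument is purely formal bookkeeping with the substitution $h = G_1^{i'} f$.
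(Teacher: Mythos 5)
Your proposal is correct and follows essentially the same route as the paper's own proof: the same reduction via \eqref{main} and the proposition to the condition $f \in H^p(\mathbb{D},\mathbb{C}^n)$ with $G_1 f \in K_{G_2^i}$, the same factorization $G_1 = G_1^{o'} G_1^{i'}$ with $(G_1^{o'})^{-1} \in H^{\infty}(\mathcal{L}(\mathbb{C}^n))$, and the same conversion of the two conditions into the stated intersection. The only difference is that you spell out explicitly the a.e.\ unitarity of $G_1^{i'}$ on $\mathbb{T}$ and the resulting bijection $f \leftrightarrow G_1^{i'} f$, which the paper leaves implicit.
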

\begin{proof}
Using the proposition above and \eqref{main} we may write $f \in \ker T_G$ if and only if $ f \in H^p(\mathbb{D},\mathbb{C}^n)$ and $G_1 f \in K_{G_2^i}$. We write $G_1 = G_1^{o'} G_1^{i'}$. Since $\det G_1^{o'}$ is outer in $H^{\infty}$ and invertible in $L^{\infty}$, it is invertible in $H^{\infty}$, which means $(G_1^{o'})^{-1} \in H^{\infty}( \mathcal{L}(\mathbb{C}^n))$. Hence we can write the condition $ f \in H^p(\mathbb{D},\mathbb{C}^n)$ and $G_1 f \in K_{G_2^i}$ as $G_1^{i'} f \in (G_1^{o'})^{-1}K_{G_2^i} \cap G_1^{i'} H^p(\mathbb{D},\mathbb{C}^n) $ and so $f \in \ker T_G$ if and only if
$$
 f \in  (G_1^{i'})^* \left( (G_1^{o'})^{-1} K_{G_2^i} \cap G_1^{i'} H^p(\mathbb{D},\mathbb{C}^n) \right).
$$
\end{proof}

\begin{prop}
Let $K$ be a $B$-invariant subspace of $H^p(\mathbb{D}, \mathbb{C}^n)$ such that $K$ evaluated at 0 is a one-dimensional subspace of $\mathbb{C}^n$. Then $K$ is of scalar type i.e, $K$ is a fixed vector multiplied by a scalar backward shift invariant subspace of $H^p$.
\end{prop}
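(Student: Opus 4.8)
The plan is to exploit the backward-shift invariance to show that not only the values $f(0)$ but \emph{all} Taylor coefficients of every $f \in K$ are forced onto a single line in $\mathbb{C}^n$. Since $\dim K(0) = 1$, I would fix a nonzero vector $w \in \mathbb{C}^n$ with $K(0) = \mathbb{C}w$. For $f \in K$ write its Taylor expansion $f = \sum_{k \geq 0} a_k z^k$ with coefficients $a_k \in \mathbb{C}^n$. The key observation is that $B$-invariance gives $B^k f \in K$ for every $k \geq 0$, while a direct computation shows $(B^k f)(0) = a_k$. Hence each $a_k \in K(0) = \mathbb{C}w$, so we may write $a_k = g_k w$ for scalars $g_k$, and therefore $f = \left( \sum_{k \geq 0} g_k z^k \right) w =: g\, w$ for a scalar analytic function $g$.

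To see $g \in H^p$, I would pick a coordinate $i$ with $w_i \neq 0$ (possible since $w \neq 0$); then $g = f_i / w_i$, and as $f_i \in H^p$ this gives $g \in H^p$. Next I would define $M := \{ g \in H^p : g\,w \in K \}$. The inclusion $wM \subseteq K$ holds by construction, while the coefficient argument above shows every $f \in K$ has the form $gw$ with $g \in M$, giving $K \subseteq wM$; hence $K = wM$. The set $M$ is a linear subspace of $H^p$ by linearity of $g \mapsto gw$, and it is $B$-invariant because $B(gw) = \frac{gw - g(0)w}{z} = (Bg)\,w$, so $gw \in K$ forces $(Bg)w = B(gw) \in K$, i.e. $Bg \in M$. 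Thus $K = wM$ with $M$ a scalar backward-shift invariant subspace of $H^p$, which is precisely the assertion that $K$ is of scalar type.

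The argument is short, and the only genuinely load-bearing step is the coefficient claim: the hypothesis $\dim K(0) = 1$ on its own constrains only the constant terms, and it is its interplay with $B$-invariance that lets one peel off coefficients one at a time and conclude that the entire Taylor series stays on the line $\mathbb{C}w$. The remaining points—that the scalar factor $g$ genuinely lands in $H^p$ rather than merely being analytic, and that $M$ inherits $B$-invariance—are routine verifications resting on the identity $B(gw) = (Bg)w$.
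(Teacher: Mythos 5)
Your proof is correct and follows essentially the same route as the paper: both arguments use $B$-invariance to show every Taylor coefficient $(B^k f)(0)$ of $f \in K$ lies in the line $K(0) = \mathbb{C}w$, write $f = gw$ with $g$ scalar, and observe that the resulting scalar space is $B$-invariant via $B(gw) = (Bg)w$. Your explicit verification that $g \in H^p$ (by dividing a nonvanishing coordinate) is a small tidying-up of a point the paper passes over quickly, but it is not a different approach.
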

\begin{proof}
Let $K$ evaluated at 0 be equal to the span of $\begin{pmatrix}
\lambda_1 &
\hdots &
\lambda_n
\end{pmatrix}^T$, then by assumption for any $f \in K$ we must have $f(0) = x_0 \begin{pmatrix}
\lambda_1 &
\hdots &
\lambda_n
\end{pmatrix}^T$ for some $x_0 \in \mathbb{C}$. Similarly $B(f)(0) = x_1 \begin{pmatrix}
\lambda_1 &
\hdots &
\lambda_n
\end{pmatrix}^T$ for some $x_1 \in \mathbb{C}$, and we repeat this process recursively to obtain $B^i(f)(0) = x_i \begin{pmatrix}
\lambda_1 &
\hdots &
\lambda_n
\end{pmatrix}^T$. Noting that $B^i(f)(0)$ is the coefficient of $z^i$ for $f$ and polynomials are dense in $H^p$, we deduce that $f = \sum_{i=0}^{\infty}x_i z^i \begin{pmatrix}
\lambda_1 &
\hdots &
\lambda_n
\end{pmatrix}^T$. Furthermore $$\{ \sum_{i=0}^{\infty}x_i z^i \in H^p : \sum_{i=0}^{\infty}x_i z^i \begin{pmatrix}
\lambda_1 &
\hdots &
\lambda_n
\end{pmatrix}^T \in K \}$$ is $B$-invariant because $K$ is.
\end{proof}

\begin{cor}
If $\ker T_G(0)$ is a one-dimensional subspace of $\mathbb{C}^n$ and in the decomposition of the kernel given in Theorem \ref{dyakonov} we have ${G_1^{i'}} = I$ and $K_{G_2^i}$ is non-trivial, then $\ker T_G$ has a maximal function.
\end{cor}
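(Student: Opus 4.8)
The plan is to reduce the matricial kernel, via the decomposition of Theorem~\ref{dyakonov}, to a fixed vector-valued multiplier applied to a scalar Toeplitz kernel, and then to transport a scalar maximal function through that multiplier. First I would insert the hypothesis $G_1^{i'} = I$ into Theorem~\ref{dyakonov}: then $(G_1^{i'})^* = I$ and $G_1^{i'}H^p(\mathbb{D},\mathbb{C}^n) = H^p(\mathbb{D},\mathbb{C}^n)$, and since $(G_1^{o'})^{-1}\in H^\infty(\mathcal{L}(\mathbb{C}^n))$ already maps $K_{G_2^i}$ into $H^p(\mathbb{D},\mathbb{C}^n)$, the intersection is superfluous and the formula collapses to $\ker T_G = (G_1^{o'})^{-1}K_{G_2^i}$.

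Next I would locate the genuinely $B$-invariant object. Note that $\ker T_G$ itself is in general only \emph{nearly} $B$-invariant, so the preceding proposition cannot be applied to it directly; instead it applies to $K_{G_2^i}$, which is $B$-invariant. Because $(G_1^{o'})^{-1}$ is invertible in $H^\infty(\mathcal{L}(\mathbb{C}^n))$, its value $(G_1^{o'})^{-1}(0)$ is an invertible matrix, so evaluation at $0$ gives $\ker T_G(0) = (G_1^{o'})^{-1}(0)\,K_{G_2^i}(0)$ and hence $\dim K_{G_2^i}(0) = \dim\ker T_G(0) = 1$. The preceding proposition then yields $K_{G_2^i} = \lambda M$ for a fixed nonzero vector $\lambda\in\mathbb{C}^n$ and a scalar $B$-invariant subspace $M\subseteq H^p$, which is nontrivial because $K_{G_2^i}$ is. By Beurling's theorem $M$ is a model space, i.e. a nontrivial scalar Toeplitz kernel, so by the scalar result of \cite{MR3286053} it has a maximal function $\psi$, that is $\kappa_{min}(\psi) = M$. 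Setting $w := (G_1^{o'})^{-1}\lambda\in H^\infty(\mathbb{D},\mathbb{C}^n)$, I then have the clean description $\ker T_G = wM = \{\,wg : g\in M\,\}$: a fixed vector-valued function times a scalar Toeplitz kernel.

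The claim to verify is that $v := w\psi$ is a maximal function for $\ker T_G$, i.e. $\kappa_{min}(v) = \ker T_G$. Membership $v\in\ker T_G$ is immediate from $\psi\in M$. For minimality I would take any bounded symbol $H$ with $v\in\ker T_H$ and show $\ker T_G\subseteq\ker T_H$. Writing $W := Hw$, whose components $W_i$ lie in $L^\infty$, the hypothesis $w\psi\in\ker T_H$ reads $W_i\psi\in\overline{H^p_0}$ for every $i$, i.e. $\psi\in\ker T_{W_i}$ for each scalar symbol $W_i$; since $\kappa_{min}(\psi) = M$, this forces $M\subseteq\ker T_{W_i}$ for every $i$. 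Then for an arbitrary $wg\in\ker T_G$ with $g\in M$ one has $W_i g\in\overline{H^p_0}$ for all $i$, so $H(wg) = Wg\in\overline{H^p_0}(\mathbb{D},\mathbb{C}^n)$ and $wg\in\ker T_H$. Thus $\ker T_G$ is contained in every Toeplitz kernel containing $v$ and itself contains $v$, which is exactly the statement $\kappa_{min}(v) = \ker T_G$.

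The main obstacle is conceptual rather than computational: recognizing that the preceding proposition must be applied to the $B$-invariant model space $K_{G_2^i}$ and not to $\ker T_G$ (which is only nearly $B$-invariant), and then carrying the resulting scalar structure back through the invertible multiplier $(G_1^{o'})^{-1}$. Once $\ker T_G$ is written as $wM$, the verification of minimality is the crux, and it hinges on the componentwise reduction $v\in\ker T_H\Rightarrow\psi\in\ker T_{W_i}$ together with the defining property $\kappa_{min}(\psi) = M$ of the scalar maximal function; this is the same mechanism used in the proof of Theorem~\ref{kmin}.
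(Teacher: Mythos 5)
Your proposal is correct, and its skeleton matches the paper's: collapse Theorem \ref{dyakonov} under $G_1^{i'}=I$ to $\ker T_G = (G_1^{o'})^{-1}K_{G_2^i}$, transfer the dimension count at $0$ to $K_{G_2^i}$ (you do this via invertibility of $(G_1^{o'})^{-1}(0)$, the paper by ruling out $K_{G_2^i}(0)=\{0\}$ through $B$-invariance -- both fine), and apply the scalar-type proposition to get $K_{G_2^i}=\lambda K_\theta$. Where you genuinely diverge is the crux, the minimality verification. The paper imports \emph{two} forward references from Section 4: Corollary \ref{max existence} (existence of a scalar maximal function $m$ for $K_\theta$) \emph{and} Corollary \ref{max function decomp} ($K_\theta = m\overline{N^+}\cap H^p$); it then parametrizes every $f\in\ker T_G$ as $(G_1^{o'})^{-1}\lambda m\overline{n}$ with $n\in N^+$ and closes with a Smirnov-class argument ($Hf\in\overline{zN^+}$ together with boundedness of $H$ forces $Hf\in\overline{H^p_0}$). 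You use only the existence of the scalar maximal function $\psi$ and its \emph{defining} property: with $w=(G_1^{o'})^{-1}\lambda$ and $W=Hw$, the hypothesis $w\psi\in\ker T_H$ reads $\psi\in\ker T_{W_i}$ for each scalar symbol $W_i\in L^\infty$, so $K_\theta=\kappa_{min}(\psi)\subseteq\ker T_{W_i}$ for every $i$, hence $H(wg)=Wg\in\overline{H^p_0}(\mathbb{D},\mathbb{C}^n)$ for all $g\in K_\theta$, i.e. $\ker T_G=wK_\theta\subseteq\ker T_H$. This componentwise reduction is more self-contained: one forward reference instead of two, no Smirnov class at all, and it sidesteps the step where the paper is slightly loose (it concludes $Hf\in\overline{H^p(\mathbb{D},\mathbb{C}^n)}$ where $\overline{H^p_0(\mathbb{D},\mathbb{C}^n)}$ is what is needed). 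What the paper's route buys in return is the explicit parametrization of $\ker T_G$ by $\overline{N^+}$, the mechanism it reuses throughout Sections 4 and 5. One shared gloss worth noting: both you and the paper pass from ``scalar $B$-invariant subspace $M$'' to ``model space $K_\theta$'' via Beurling without ruling out $M=H^p$; your remark that $M$ is nontrivial only excludes $M=\{0\}$. The missing word is that $K_{G_2^i}=\lambda H^p$ would be shift invariant, so $(G_2^i)^*\lambda z^n\in\overline{H^p_0(\mathbb{D},\mathbb{C}^n)}$ for all $n\geqslant 0$, forcing $(G_2^i)^*\lambda=0$ and hence $\lambda=0$ since $G_2^i$ is unitary-valued a.e.; as the paper makes the identical jump, this is not a defect specific to your argument.
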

\begin{proof}
If $G_1^{i'} = I$ then we have $G_1^{o'} \ker T_G = K_{G_2^i}$, so $G_1^{o'} (0)\ker T_G (0) = K_{G_2^i} (0)$. Which means either $K_{G_2^i} (0)$ is a one-dimensional subspace of $\mathbb{C}^n$ or is equal to $0$, but as $K_{G_2^i}$ is $B$-invariant it can never be the case that $K_{G_2^i} \subseteq z H^p( \mathbb{D}, \mathbb{C}^n)$. So we must have $K_{G_2^i}(0)$ is a one-dimensional subspace of $\mathbb{C}^n$. Then by the previous proposition $K_{G_2^i}$ must be equal to $\begin{pmatrix}
\lambda_1 &
\hdots &
\lambda_n
\end{pmatrix}^T K_{\theta}$ for some $\begin{pmatrix}
\lambda_1 &
\hdots &
\lambda_n
\end{pmatrix}^T \in \mathbb{C}^n$, and some scalar inner function $\theta$.

We now use Corollaries \ref{max existence} and \ref{max function decomp} which are proved later in section 4 but the proof is independant of any previous results. If we let $m$ be the maximal function of $K_{\theta}$ (which exists by Corollary \ref{max existence}) then by Corollary \ref{max function decomp} given any $f \in \ker T_G$ we can write $ f = (G_1^{o'})^{-1}\begin{pmatrix}
\lambda_1 &
\hdots &
\lambda_n
\end{pmatrix}^T m \overline{n}$ for some $n \in N^+$. So if $(G_1^{o'})^{-1} \begin{pmatrix}
\lambda_1 &
\hdots &
\lambda_n
\end{pmatrix}^T m \in \ker T_H$ then
$$
Hf = H (G_1^{o'})^{-1} \begin{pmatrix}
\lambda_1 \\
\vdots \\
\lambda_n \\
\end{pmatrix} m \overline{n} \in \overline{n H^p(\mathbb{D}, \mathbb{C}^n)} \subseteq \overline{N^+(\mathbb{D}, \mathbb{C}^n)}.
$$
Furthermore $f \in H^p(\mathbb{D}, \mathbb{C}^n)$ and $H$ is bounded so we must actually have $Hf \in \overline{H^p(\mathbb{D}, \mathbb{C}^n)}$, and so $f \in \ker T_H$. As our $f$ was arbitrary we have $\ker T_G \subseteq \ker T_H$. This shows $(G_1^{o'})^{-1} \begin{pmatrix}
\lambda_1 &
\hdots &
\lambda_n
\end{pmatrix}^T m$ is a maximal vector for $\ker T_G$.
\end{proof}

For $1<p< \infty$ and a Toeplitz operator $T_g: H^p \to H^p$  Theorem 2 in \cite{hartmann2003extremal} shows existence of an extremal function $q \in \ker T_g$, and an inner function $I$ vanishing at $0$ such that: \begin{enumerate}
    \item If $p \leqslant 2$ then $q K_{I}^2 \subset \ker T_g \subset q K_{I}^p$.
    \item If $p \geqslant 2$ then $qK_{i}^p \subset \ker T_g \subset qK_{I}^2$.
\end{enumerate}
We now state a reformulation of this result, which may be viewed as a generalisation of the result given by Hayashi in \cite{MR853630} to $1 < p < \infty$.
\begin{cor}\label{max}
\begin{enumerate}
    \item If $p \leqslant 2$ then $ \ker T_g = q K_{I}^p \cap H^p$.
    \item If $p \geqslant 2$ then $\ker T_g = qK_{I}^2 \cap H^p$.
\end{enumerate}
\end{cor}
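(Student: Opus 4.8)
The plan is to treat the two cases in parallel: in each, one inclusion is free and the other is the real work. Since $q\in\ker T_g\subseteq H^p$, intersecting the sandwich of \cite{hartmann2003extremal} with $H^p$ gives at once $\ker T_g=\ker T_g\cap H^p\subseteq qK_I^p\cap H^p$ when $p\leqslant 2$, and $\ker T_g\subseteq qK_I^2\cap H^p$ when $p\geqslant 2$; these are exactly the asserted upper bounds, so only the reverse inclusions remain. For those I would exploit that $\ker T_g$ is closed, hence weakly closed, being a closed subspace of the reflexive space $H^p$, together with the standard fact that the bounded functions $K_I^\infty$ lie in every $K_I^r$ and are dense in $K_I^r$ in the $H^r$-norm. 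Thus each element of the target model space is approximable by elements of the model space occurring in Hartmann's \emph{lower} bound, and the task reduces to pushing such approximations through multiplication by $q$ and concluding membership in $\ker T_g$.

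This is clean when $p\geqslant 2$. Take $f=qh\in qK_I^2\cap H^p$ with $h\in K_I^2$ and choose $h_n\in K_I^\infty\subseteq K_I^p$ with $h_n\to h$ in $H^2$. Then $qh_n\in qK_I^p\subseteq\ker T_g$, so $g\,qh_n\in\overline{H^p_0}$. Writing $s:=\tfrac{2p}{p+2}$, Hölder's inequality with $q\in H^p$ gives $qh_n\to qh=f$ in $H^s$, hence $g\,qh_n\to gf$ in $L^s$. Since $p\geqslant 2$ we have $s\geqslant 1$, so $\overline{H^s_0}$ is closed in $L^s$ and therefore $gf\in\overline{H^s_0}$. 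As $gf\in L^p$ as well, writing $gf=\overline{zw}$ with $w\in H^s\subseteq N^+$ and $|w|=|gf|\in L^p$ yields $w\in N^+\cap L^p=H^p$, so $gf\in\overline{H^p_0}$ and $f\in\ker T_g$. This gives $qK_I^2\cap H^p\subseteq\ker T_g$, hence equality.

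The case $p\leqslant 2$ is where I expect the main obstacle. The same scheme approximates $h\in K_I^p$ by $h_n\in K_I^\infty\subseteq K_I^2$ with $qh_n\in\ker T_g$, but now the Hölder exponent is $s=\tfrac{p}{2}<1$, so $\overline{H^{p/2}_0}$ is no longer closed in $L^{p/2}$ and the limiting step collapses. To repair this I would not rely on bare density but on the extremal structure of $q$ furnished by \cite{hartmann2003extremal}: the point is to show that multiplication by $q$ is bounded, and bounded below, from $K_I^p$ into $H^p$, so that the approximants $qh_n$ stay bounded in $H^p$; reflexivity of $H^p$ then promotes the $H^{p/2}$-convergence $qh_n\to f$ to weak convergence in $H^p$, and weak closedness of $\ker T_g$ forces $f\in\ker T_g$. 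Equivalently, one may argue by duality, testing $g\,q\,h$ against $z^m$ and using that the extremal $q$ enjoys the extra integrability $q\in H^{p'}$ to transfer the vanishing $P(g\,qk)=0$ from the dense set $k\in K_I^2$ to all $k\in K_I^p$. Either route reduces the theorem to a quantitative property of the extremal function rather than to generic model-space density, and securing that property is the crux of the argument.
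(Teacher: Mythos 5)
Your case $p \geqslant 2$ argument is correct and complete: density of $K_I^\infty$ in $K_I^2$, H\"older, closedness of $\overline{H^s_0}$ in $L^s$ for $s = 2p/(p+2) \geqslant 1$, and the final Smirnov-class step $N^+ \cap L^p = H^p$ all hold. But case $p \leqslant 2$ --- the case the paper actually writes out --- is left genuinely open in your proposal, and neither of the repairs you sketch can be carried out. The claim that multiplication by $q$ is bounded (and bounded below) from $K_I^p$ into $H^p$ is false in general: if it held, then $qK_I^p \subseteq H^p$, so the statement being proved would collapse to $\ker T_g = qK_I^p$ with no intersection, i.e.\ Hartmann's upper inclusion would always be an equality --- precisely what fails for $p \neq 2$ and the very reason the corollary intersects with $H^p$ at all. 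Likewise the ``extra integrability'' $q \in H^{p'}$ of the extremal function is not provided by \cite{hartmann2003extremal} and is not a property extremal functions enjoy in general; so the proposed reduction ``to a quantitative property of the extremal function'' is a reduction to something unobtainable, and without the uniform bound on $\|qh_n\|_{H^p}$ the weak-compactness route never starts.

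Ironically, the obstacle you diagnose is illusory. For $0 < s < 1$ the set of boundary functions of $H^s$ \emph{is} closed in $L^s(\mathbb{T})$: the integral means increase to the boundary norm, so $f \mapsto f^*$ is an isometric embedding of the complete metric space $H^s$ into $L^s(\mathbb{T})$, and an isometric image of a complete space is closed (what fails for $s<1$ is boundedness of the Riesz projection, not closedness of $H^s$). Hence your ``clean'' argument runs essentially verbatim for $p \leqslant 2$: $gqh_n \to gf$ in $L^{p/2}$, the space $\overline{H^{p/2}_0}$ is closed there, so $gf = \overline{zw}$ with $w \in H^{p/2} \subseteq N^+$, and $|w| = |gf| \in L^p$ gives $w \in H^p$. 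The paper's own proof is simpler still and needs no approximation or density at all: from $qK_I^2 \subseteq \ker T_g$ it extracts only the single function $qI\overline{z} \in \ker T_g$, writes $gqI\overline{z} \in \overline{H^p_0}$, and then for an arbitrary element of $qK_I^p \cap H^p$ --- every such element has the form $qI\overline{zk}$ with $k \in H^p$ --- observes that $gqI\overline{zk} \in \overline{zN^+} \cap L^p = \overline{H^p_0}$. The Smirnov class absorbs exactly the integrability loss that your H\"older step was fighting, uniformly in $p$.
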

\begin{proof}
We will prove statement $(1)$. The $\subset$ inclusion is clear from the original result. To show the other inclusion we first observe that as $q K_{I}^2 \subset \ker T_g$ we must have $q I \overline{z}  \in \ker T_g$. Then for all $p \in H^p$ we must then have 
$$
g q I \overline{zp} \in \overline{z N^+},
$$
and so if $q I \overline{zp}$ also lies in $H^p$ we must have $g q I \overline{zp} \in \overline{H^p_0}$, and so consequently $q I \overline{zp} \in \ker T_g$. The result now follows from the fact that any element of $q K_{I}^p \cap H^p$ can be written as $q I \overline{zp} $ for some $p \in H^p$.
\end{proof}

Although the existence of maximal functions in the scalar case has been established in \cite{MR3286053}, we can use the above corollary to give an alternate expression for a maximal function of a given scalar Toeplitz kernel.

\begin{cor}
If $\ker T_g$ is expressed as in Corollary \ref{max}, then $\kappa_{min}(q I\overline{z}) = \ker T_g$.
\end{cor}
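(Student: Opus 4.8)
The plan is to prove the two inclusions separately, exploiting the fact that one direction is essentially automatic. First I would record that $qI\overline z \in \ker T_g$; this is precisely the observation made at the start of the proof of Corollary~\ref{max} (it follows because $I\overline z = I/z$ is inner, hence lies in $H^2$, and $\overline I(I\overline z)=\overline z \in \overline{H^p_0}$, so $I\overline z \in K_I^r$ for the relevant exponent $r$, whence $qI\overline z \in qK_I^r \subseteq \ker T_g$). Since $\kappa_{\min}(qI\overline z)$ is by definition contained in \emph{every} Toeplitz kernel containing $qI\overline z$, applying this with the symbol $g$ gives immediately $\kappa_{\min}(qI\overline z)\subseteq \ker T_g$.

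For the reverse inclusion I would invoke Theorem~\ref{kmin} in the scalar case $n=1$, which identifies $\kappa_{\min}(qI\overline z)=\ker T_{G_0}$ with $G_0=\overline{qI\overline z}\,\overline z/(qI\overline z)^o$. Writing $J=I/z$ (inner, since $I$ vanishes at $0$) so that $qI\overline z=qJ$, the outer factor is $(qI\overline z)^o=q^o$, and on $\mathbb{T}$ one computes $\overline{qI\overline z}\,\overline z=\overline q\,\overline I$, so that $G_0=\overline q\,\overline I/q^o$. It then suffices to show $\ker T_g\subseteq \ker T_{G_0}$. Take an arbitrary $f\in\ker T_g$; by Corollary~\ref{max} and the representation of $qK_I^p\cap H^p$ (resp.\ $qK_I^2\cap H^p$) used there, write $f=qI\overline{zh}$ for a suitable $h\in H^p$ (resp.\ $h\in H^2$). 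A direct computation on $\mathbb{T}$, using $|I|=1$ together with $|q|^2=|q^o|^2=q^o\overline{q^o}$, then gives
$$
G_0 f=\frac{\overline q\,\overline I}{q^o}\,qI\overline{zh}=\frac{|q|^2}{q^o}\,\overline{zh}=\overline{z\,q^o h}.
$$

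The final step is to verify $z\,q^o h\in H^p_0$, and this is where the real content lies. The outer function $q^o$ and the function $h$ both belong to the Smirnov class, which is an algebra, so $z\,q^o h\in N^+$; moreover $|z\,q^o h|=|q^o|\,|h|=|q|\,|h|=|f|\in L^p$. By the identity $N^+\cap L^p=H^p$ noted after Proposition~\ref{quotientouter}, we conclude $z\,q^o h\in H^p$, and the factor $z$ forces it into $H^p_0$. Hence $G_0 f=\overline{z\,q^o h}\in\overline{H^p_0}$, i.e.\ $f\in\ker T_{G_0}$, which proves $\ker T_g\subseteq\ker T_{G_0}=\kappa_{\min}(qI\overline z)$; combined with the first inclusion this yields the claimed equality.

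I expect the main obstacle to be exactly this last step. After the algebraic simplification one only sees directly that $G_0 f$ has $L^p$ modulus and that $z\,q^o h$ lies in the Smirnov class; because $q^o$ may be unbounded, one cannot simply assert $q^o h\in H^p$. The device that closes the gap is the Smirnov-class characterisation $N^+\cap L^p=H^p$, which upgrades the cheaply obtained $N^+$ membership to genuine $H^p$ membership using only the modulus bound inherited from $f\in H^p$.
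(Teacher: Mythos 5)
Your proof is correct, but it takes a genuinely different route from the paper's. The paper argues directly from the definition of minimality: it takes an \emph{arbitrary} bounded symbol $h$ with $qI\overline{z} \in \ker T_h$, represents a general element of $\ker T_g$ as $qI\overline{zp}$ via Corollary \ref{max}, and observes that $hqI\overline{z} \in \overline{H^p_0}$ forces $hqI\overline{zp} \in \overline{zN^+} \cap L^p = \overline{H^p_0}$ --- the same Smirnov-class upgrade you use, but applied against every competing kernel at once, with no need to know a symbol for the minimal kernel. You instead invoke Theorem \ref{kmin} (which the paper's proof never uses) to identify $\kappa_{min}(qI\overline{z})$ as the kernel of the explicit unimodular symbol $G_0 = \overline{q}\,\overline{I}/q^o$, obtain the inclusion $\kappa_{min}(qI\overline{z}) \subseteq \ker T_g$ for free from the existence of minimal kernels, and then verify the single concrete inclusion $\ker T_g \subseteq \ker T_{G_0}$ by computation. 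What your route buys: it replaces a quantification over all symbols $h$ by one explicit calculation, and as a byproduct exhibits a unimodular symbol $\overline{q}\,\overline{I}/q^o$ whose kernel is exactly $\ker T_g$, which the paper's proof does not provide. What it costs: it rests on the full strength of Theorem \ref{kmin}, whereas the paper's argument is self-contained modulo Corollary \ref{max} and the identity $\overline{zN^+} \cap L^p = \overline{H^p_0}$. Your supporting algebra is sound throughout: $(qI\overline{z})^o = q^o$ because $I\overline{z}=I/z$ is inner, the simplification $G_0 f = \overline{z\,q^o h}$ is valid a.e.\ on $\mathbb{T}$, and the final step correctly uses $N^+ \cap L^p = H^p$ to handle the possibly unbounded outer factor $q^o$; you also treat both cases $p \leqslant 2$ and $p \geqslant 2$ in parallel, whereas the paper only writes out $p \leqslant 2$.
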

\begin{proof}
We will prove the statement in the case $p \leqslant 2$. It is clear $qI \overline{z} \in \ker T_g$. If $qI\overline{z} \in \ker T_h$ for any other bounded symbol $h$, then for any $\overline{p} \in \overline{H^p}$ such that $q I \overline{zp} \in H^p$, because $h q I \overline{z} \in \overline{H^p_0}$, we must have $h q I \overline{zp} \in \overline{H^p_0}$.
\end{proof}
\subsection{Maximal functions when $p=2$}
For the remainder of this section we set $p=2$, and only consider Toeplitz operators 
$$
T_G :  H^2(\mathbb{D}, \mathbb{C}^n) \to H^2(\mathbb{D}, \mathbb{C}^n).
$$
\vskip 0.5cm
When considering whether a given Toeplitz kernel has a maximal function the space $W:= \ker T_G \ominus (\ker T_G \cap \ zH^2(\mathbb{D}, \mathbb{C}^n )) $ is central to this problem. We know from Corollary 4.5 in \cite{MR2651921} that $\ker T_G$ can be written as
\begin{equation}\label{ker}
\ker T_G = [ W_1, W_2, ... W_r] (H^2(\mathbb{D}, \mathbb{C}^r) \ominus \Phi H^2(\mathbb{D}, \mathbb{C}^{r'})),
\end{equation}
where $W_1, ... W_r$ is an orthonormal basis for $W$, $\Phi$ is a $r $ by $ r^{'}$ matrix inner function vanishing at 0 (i.e an isometry from $ (H^2(\mathbb{D}, \mathbb{C}^{r'})$ to $(H^2(\mathbb{D}, \mathbb{C}^r)$) and in this decomposition $ r^{'} \leqslant r$.

\begin{lem}\label{scalar type}
 $ \dim \ker T_G(0) = \dim W$.
\end{lem}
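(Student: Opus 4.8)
The plan is to realise both quantities as the rank of a single linear map, namely evaluation at the origin, and then invoke the elementary fact that in a Hilbert space an orthogonal complement of a closed subspace is isomorphic to the corresponding quotient. Concretely, I would introduce the point-evaluation map
$$
\mathcal{E} : \ker T_G \to \mathbb{C}^n, \qquad \mathcal{E}(f) = f(0),
$$
which is well defined and bounded since evaluation at $0$ is a bounded functional on $H^2$ (it is the $0$-th Taylor coefficient). By definition the image of $\mathcal{E}$ is exactly $\ker T_G(0)$, so $\dim \ker T_G(0) = \operatorname{rank} \mathcal{E}$.

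Next I would identify the kernel of $\mathcal{E}$. Since for $f \in H^2(\mathbb{D}, \mathbb{C}^n)$ we have $f(0) = 0$ if and only if $f \in z H^2(\mathbb{D}, \mathbb{C}^n)$, the null space of $\mathcal{E}$ is precisely $\ker T_G \cap z H^2(\mathbb{D}, \mathbb{C}^n)$. This is a closed subspace of $\ker T_G$, and by definition $W$ is its orthogonal complement within $\ker T_G$. I would then argue that the restriction $\mathcal{E}|_W$ is a linear isomorphism onto $\ker T_G(0)$: it is injective because $W$ meets the null space of $\mathcal{E}$ only in $0$, and it is surjective because any $f \in \ker T_G$ decomposes orthogonally as $f = w + g$ with $w \in W$ and $g \in \ker T_G \cap z H^2(\mathbb{D}, \mathbb{C}^n)$, whence $f(0) = w(0) = \mathcal{E}(w)$. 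Therefore every element of $\ker T_G(0)$ is already attained on $W$.

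Having established that $\mathcal{E}|_W : W \to \ker T_G(0)$ is a linear bijection, I would conclude immediately that $\dim W = \dim \ker T_G(0)$. Equivalently, one may phrase the argument through the first isomorphism theorem, noting $W \cong \ker T_G \big/ (\ker T_G \cap z H^2(\mathbb{D}, \mathbb{C}^n))$ and that this quotient is carried isomorphically onto $\ker T_G(0)$ by $\mathcal{E}$. I do not anticipate a genuine obstacle here: the statement is essentially a rank-nullity computation, and the only point requiring a word of care is the surjectivity of $\mathcal{E}|_W$, which must be read off from the orthogonal decomposition rather than assumed. It is also worth recording that both dimensions are automatically finite, being bounded above by $n = \dim \mathbb{C}^n$, so no subtlety about infinite-dimensional quotients arises.
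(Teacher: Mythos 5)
Your proof is correct, and it takes a genuinely more elementary route than the paper's. The paper proves the lemma by working with the Sarason-type decomposition \eqref{ker}, $\ker T_G = [W_1,\dots,W_r]\bigl(H^2(\mathbb{D},\mathbb{C}^r)\ominus \Phi H^2(\mathbb{D},\mathbb{C}^{r'})\bigr)$, imported from Corollary 4.5 of \cite{MR2651921}: the linear independence of $W_1(0),\dots,W_r(0)$ is obtained exactly as in your injectivity step (an element of $W$ vanishing at $0$ lies in $W\cap\bigl(\ker T_G\cap zH^2(\mathbb{D},\mathbb{C}^n)\bigr)=\{0\}$), but the spanning statement is read off by evaluating that decomposition at $0$, using that $\Phi$ vanishes there. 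You never invoke the decomposition at all: surjectivity of the evaluation map restricted to $W$ follows directly from the orthogonal splitting $\ker T_G = W \oplus \bigl(\ker T_G \cap zH^2(\mathbb{D},\mathbb{C}^n)\bigr)$, since the second summand contributes nothing at the origin. What your approach buys is self-containedness — the lemma becomes pure linear algebra (rank-nullity for the evaluation map) plus the definition of $W$, with no external structure theorem; it also makes transparent that finiteness of $\dim W$ is automatic from the injection into $\mathbb{C}^n$. What the paper's approach buys is an explicit basis $W_1(0),\dots,W_r(0)$ of $\ker T_G(0)$ tied to the orthonormal basis of $W$, and since \eqref{ker} is needed anyway for the theorems that follow, the paper incurs no extra cost in quoting it; but your argument shows the lemma itself requires none of that machinery.
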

\begin{proof}
 $W_1(0), ... W_r(0)$ are linearly independent, as if $W_k(0) = \sum_{i \neq k} \lambda_i W_i(0)$ this would mean $W_k - \sum_{i \neq k} \lambda_i W_i$ vanishes at 0 and therefore lies in $zH^2(\mathbb{D}, \mathbb{C}^n )$. Next we show that  $W_1(0), ... W_r(0)$ span $\ker T_G(0)$. Evaluating $\ker T_G$ at 0 gives
$$
\ker T_G(0) = [ W_1(0), W_2(0), ... W_r(0)] \mathbb{C}^r ,
$$
which is equal to the span of $W_1(0), ... W_r(0)$. So $W_1(0), ... W_r(0)$ are a basis for $\ker T_G(0)$.
\end{proof}

Taking in to account Theorem \ref{negative max} and the previous lemma we can conclude if $\ker T_G$ is such that $\dim W > 1$, then $\ker T_G$ does not have a maximal function. This leaves us with the following question: if $\ker T_G$ is such that $\dim W=1$ does this Toeplitz kernel have a maximal function? When $\dim W=1$, using the Sarason style decomposition from \eqref{ker} we can write
\begin{equation}\label{decomp}
\ker T_G = W_1 (H^2 \ominus \Phi H^2),
\end{equation}
where $\Phi$ is an inner function vanishing at 0 or $\Phi = 0$. So either:
\begin{enumerate}
    \item $\ker T_G = W_1 K_{\Phi}$,
    \item $\ker T_G = W_1 H^2$.
\end{enumerate}
In case 1 $K_{\Phi}$ is a Toeplitz kernel so $\ker T_G$ has a maximal function given by $W_1 \Phi \overline{z}$ as shown in Theorem 3.17 in \cite{camara2018scalar}.

For case 2 we find that unlike the scalar Toeplitz kernel case there are non-trivial matricial Toeplitz kernels that are shift invariant, for example $\ker T_{\begin{pmatrix}
1 & -1 \\
0 & 0
\end{pmatrix}} = \begin{pmatrix}
1 \\
1
\end{pmatrix}H^2$. In case 2, $\ker T_G$ can not have a maximal function as if it did we would have $\kappa_{min}(\phi) = W_1 H^2$, but this can't be the case as Theorem \ref{kmin} shows the minimal kernel of any element $\phi \in H^2(\mathbb{D}, \mathbb{C}^n)$ is not shift invariant (in particular $\phi z \not\in \kappa_{min}(\phi))$.
We can summarise these results to conclude the following theorem.

\begin{thm}
A non zero Toeplitz kernel, $\ker T_G$, has a maximal function if and only if both: $\dim W=1$ (or equivalently $\dim \ker T_G(0) = 1)$), and when $\ker T_G$ is decomposed as in \eqref{decomp}, $\ker T_G $ takes the form $\ker T_G = W_1 K_{\Phi}$.
\end{thm}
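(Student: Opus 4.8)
The plan is to prove the two implications separately, drawing almost entirely on the machinery already assembled in this subsection; no new analysis is needed beyond organizing the cases correctly. The structural backbone is the Sarason-type decomposition \eqref{decomp}: once we know $\dim W = 1$, it forces $\ker T_G$ into exactly one of the two forms $W_1 K_{\Phi}$ or $W_1 H^2$, and the whole theorem reduces to identifying which form admits a maximal function.

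For the forward implication I would argue contrapositively, in two stages. First, to see that a maximal function forces $\dim W = 1$: by Lemma \ref{scalar type} we have $\dim \ker T_G(0) = \dim W$, and since $\ker T_G$ is nonzero we have $\dim W \geqslant 1$; if $\dim W > 1$ then $\dim \ker T_G(0) > 1$, and Theorem \ref{negative max} rules out any maximal function. Hence the existence of a maximal function forces $\dim W = 1$. Second, with $\dim W = 1$ in hand, \eqref{decomp} yields either $\ker T_G = W_1 K_{\Phi}$ or $\ker T_G = W_1 H^2$; I would exclude the latter via the shift-invariance obstruction already noted, namely that if $\ker T_G = W_1 H^2$ admitted a maximal function $\phi$, then $\kappa_{min}(\phi) = W_1 H^2$ would be backward-shift invariant, whereas the explicit symbol of Theorem \ref{kmin} shows $\kappa_{min}(\phi)$ can never be shift invariant (in particular $z\phi \notin \kappa_{min}(\phi)$). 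This contradiction leaves $\ker T_G = W_1 K_{\Phi}$.

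For the reverse implication, assume $\dim W = 1$ and $\ker T_G = W_1 K_{\Phi}$ with $\Phi$ inner vanishing at $0$. Here $K_{\Phi}$ is the scalar model-space Toeplitz kernel, whose maximal function is $\Phi\overline{z}$, and multiplying through by the fixed vector $W_1$ transports this to a maximal function $W_1\Phi\overline{z}$ for $\ker T_G$, exactly as established in Theorem 3.17 of \cite{camara2018scalar}. Collecting the two directions completes the proof.

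The substantive ingredients---the negative result of Theorem \ref{negative max}, the explicit-symbol content of Theorem \ref{kmin}, the identification $\dim\ker T_G(0)=\dim W$, and the scalar existence result---are all in place, so the remaining work is essentially bookkeeping. The one point I expect to require genuine care is the elimination of the case $\ker T_G = W_1 H^2$: this is the only step that uses the precise structure of a minimal kernel (its failure of shift invariance) rather than a soft dimension count, and I would want to state cleanly why $W_1 H^2$ being shift invariant is incompatible with being realized as some $\kappa_{min}(\phi)$.
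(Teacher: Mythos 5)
Your proposal is correct and follows essentially the same route as the paper's own proof: the combination of Lemma \ref{scalar type} with Theorem \ref{negative max} to kill the case $\dim W > 1$, the non-shift-invariance of minimal kernels (via Theorem \ref{kmin}) to exclude $\ker T_G = W_1 H^2$, and Theorem 3.17 of \cite{camara2018scalar} to produce the maximal function $W_1 \Phi \overline{z}$ in the case $\ker T_G = W_1 K_{\Phi}$. Your write-up is merely a more explicitly organized version of the paper's argument, which states these same three steps in compressed form.
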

\begin{rem}
From \eqref{decomp} these two conditions can be concisely written as, $\dim \ker T_G(0) = 1$ and $\ker T_G$ is not shift invariant.
\end{rem}
\begin{proof}
Lemma \ref{scalar type} and Theorem \ref{negative max} show that if $\dim W > 1$ then $\ker T_G$ does not have a maximal vector. Conversely if $\dim W =1$ then writing the kernel as we have done in \eqref{decomp} shows that if $\ker T_G$ can be written as $ W_1 K_{\Phi}$, then it necessarily must have a maximal function. If $\ker T_G = W_1 H^2$, then $\ker T_G$ can have no maximal function as minimal kernels are never shift invariant.
\end{proof}
\begin{cor}
If $\ker T_G$ is non zero, then $\ker T_G$ is of scalar type if and only if $\dim \ker T_G(0) = 1$.
\end{cor}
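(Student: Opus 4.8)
The plan is to deduce the corollary directly from the two structural facts already available in the $p=2$ setting: Lemma \ref{scalar type}, which gives $\dim\ker T_G(0)=\dim W$, and the Sarason-type decomposition \eqref{decomp}, which pins down $\ker T_G$ as soon as $\dim W=1$. So the first move is to replace the hypothesis $\dim\ker T_G(0)=1$ by the equivalent statement $\dim W=1$ via Lemma \ref{scalar type}, reducing the whole claim to: $\ker T_G$ is of scalar type if and only if $\dim W=1$.

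For the implication $\dim\ker T_G(0)=1\Rightarrow$ scalar type, I would apply \eqref{decomp}: since $\dim W=1$, we may write $\ker T_G=W_1(H^2\ominus\Phi H^2)$, so that either $\ker T_G=W_1K_\Phi$ or $\ker T_G=W_1H^2$. In both cases $\ker T_G$ is a single fixed generator $W_1$ multiplied by a scalar backward-shift-invariant subspace (namely $K_\Phi$, respectively $H^2$), which is exactly the scalar-type form. The one point requiring care is that $W_1(0)\neq0$: this is forced because the generator of $W$ cannot vanish at the origin, for otherwise $\ker T_G\subseteq zH^2(\mathbb{D},\mathbb{C}^n)$, which is impossible for a (nearly invariant) Toeplitz kernel — repeatedly dividing a nonzero element by $z$ would eventually produce a member of $\ker T_G$ not vanishing at $0$.

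For the converse, suppose $\ker T_G$ is of scalar type, say $\ker T_G=W_1M$ with $W_1$ a fixed generator and $M$ a nonzero scalar backward-shift-invariant subspace of $H^2$. Evaluating at $0$ gives $\ker T_G(0)=W_1(0)\,M(0)$. The key sub-step is that any nonzero backward-shift-invariant $M\subseteq H^2$ satisfies $M(0)=\mathbb{C}$: choosing $0\neq m\in M$ of minimal vanishing order $k$ at the origin, the function $B^k m$ lies in $M$ and has $(B^k m)(0)\neq0$. Since $\ker T_G$ is nonzero it is not contained in $zH^2$, so by the same reasoning as above $W_1(0)\neq0$; hence $\ker T_G(0)=\mathbb{C}\,W_1(0)$ is one-dimensional, as required.

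The main obstacle is not any single hard estimate but getting the bookkeeping around the origin exactly right and matching the output of \eqref{decomp} to the definition of scalar type. Concretely, one must confirm that the scalar factor coming out of \eqref{decomp} is genuinely backward-shift invariant (both $K_\Phi$ and $H^2$ are) and that neither $W_1$ nor $M$ degenerates at $0$, so that the evaluation map $f\mapsto f(0)$ has one-dimensional image precisely when the scalar-type structure is present. Once these non-degeneracy facts are secured, both directions follow immediately from Lemma \ref{scalar type} and \eqref{decomp}.
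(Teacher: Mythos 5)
Your proof is correct and takes essentially the same route as the paper: both directions come down to Lemma \ref{scalar type} (identifying $\dim\ker T_G(0)$ with $\dim W$) combined with the decomposition \eqref{decomp}. The only difference is that you spell out the non-degeneracy at the origin (that $W_1(0)\neq 0$ by near invariance, and that $M(0)=\mathbb{C}$ for a nonzero $B$-invariant $M$) in the direction the paper dismisses as clear, which is a harmless elaboration of the same argument.
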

\begin{proof}
If $\ker T_G$ is of scalar type it is clear that $\dim \ker T_G(0) = 1$. Conversely if $\dim \ker T_G(0) = 1$, then Lemma \ref{scalar type} shows $\dim W = 1$, and then \eqref{decomp} shows $\ker T_G$ is of scalar type.
\end{proof}
\begin{thm}
If $\Phi = 0$ in \eqref{decomp} i.e if $\ker T_G = W_1 H^2$, then for any $f \in H^2$ which is a cyclic vector for the backward shift on $H^2$, we have $\kappa_{min}(W_1, W_1f) = W_1 H^2 $.
\end{thm}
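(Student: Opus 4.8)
The plan is to prove the two inclusions separately, the nontrivial one being $W_1 H^2 \subseteq \kappa_{min}(W_1, W_1 f)$. Write $K := \kappa_{min}(W_1, W_1 f)$, which is a Toeplitz kernel containing both $W_1$ and $W_1 f$. Since $\ker T_G = W_1 H^2$ is itself a Toeplitz kernel containing $W_1 = W_1 \cdot 1$ and $W_1 f$, minimality of $K$ gives $K \subseteq \ker T_G = W_1 H^2$ at once; the work lies in the reverse inclusion.

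To organise the reverse inclusion I would pass to the space of scalar coefficients of $K$. From the decomposition \eqref{decomp} with $\Phi = 0$, multiplication by $W_1$ is an isometry of $H^2$ onto $\ker T_G = W_1 H^2$ (this is part of the Sarason-type structure \eqref{ker} of \cite{MR2651921}); in particular it is injective, so each element of $K \subseteq W_1 H^2$ is $W_1 h$ for a unique $h \in H^2$. Define
$$ S := \{ h \in H^2 : W_1 h \in K \}. $$
Then $S$ is the preimage of the closed set $K$ under the bounded multiplication map, hence a closed subspace of $H^2$; moreover $K = W_1 S$, and $1, f \in S$ because $W_1, W_1 f \in K$. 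The goal thus becomes to show $S = H^2$.

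The key step, and the main obstacle, is to show that $S$ is backward-shift invariant, and here I would exploit that $K$, being a Toeplitz kernel, is nearly $\overline z$-invariant (\cite{MR3286053}): if $\psi \in K$ and $\overline z \psi \in H^2(\mathbb{D}, \mathbb{C}^n)$ then $\overline z \psi \in K$. Given $h \in S$, both $W_1 h$ and $W_1$ lie in $K$, so $\psi := W_1(h - h(0)) = W_1 h - h(0) W_1 \in K$. This $\psi$ vanishes at $0$, whence $\overline z \psi = \psi / z = W_1 \cdot \frac{h - h(0)}{z} = W_1 (Bh) \in H^2(\mathbb{D}, \mathbb{C}^n)$; near $\overline z$-invariance then yields $W_1 (Bh) \in K$, that is $Bh \in S$. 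Thus $S$ is a closed, $B$-invariant subspace of $H^2$.

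Finally, since $f$ is cyclic for $B$ on $H^2$, the only closed $B$-invariant subspace containing $f$ is $H^2$ itself; as $S$ is such a subspace we conclude $S = H^2$, and therefore $K = W_1 S = W_1 H^2$, completing the proof. I expect the only genuinely delicate points to be the $B$-invariance step above (in particular checking that dividing the vanishing function $\psi$ by $z$ keeps it in $H^2(\mathbb{D},\mathbb{C}^n)$) and the use of the isometric identification $h \mapsto W_1 h$ to guarantee that $S$ is well defined and closed; both are handled by the observations indicated.
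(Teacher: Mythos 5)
Your proposal is correct and takes essentially the same route as the paper's proof: both hinge on near $\overline{z}$-invariance applied to $W_1h - h(0)W_1$, the cyclicity of $f$, and the fact that multiplication by $W_1$ is an isometry (so that the closure step goes through); your preimage space $S$ merely repackages the paper's induction $W_1B^n(f) \in \ker T_H$ together with the closedness of the kernel. One small framing point: rather than setting $K = \kappa_{min}(W_1, W_1f)$ at the outset (its existence is part of what must be proved), run your reverse-inclusion argument for an arbitrary Toeplitz kernel $\ker T_H$ containing $W_1$ and $W_1f$ --- it works verbatim, since you only use that $K$ is a closed, nearly $\overline{z}$-invariant subspace containing the two vectors, and it then yields existence and the identification $\kappa_{min}(W_1, W_1f) = W_1H^2$ simultaneously.
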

\begin{proof}
It is clear the two vectors are in the required kernel. Theorem 4.4 in \cite{MR2651921} shows that multiplication by $W_1$ is an isometric mapping from $H^2$ to $H^2(\mathbb{D}, \mathbb{C}^2)$, so $W_1$ is a $2$ by $1$ matrix inner function.  If $ W_1,  W_1f \in \ker T_H$ for any bounded $H$, then for any $\lambda \in \mathbb{C}$
$$
W_1(f - \lambda) \in \ker T_H.
$$
So setting $\lambda = f(0)$, and using the near invariance property of Toeplitz kernels we see that 
$$
 W_1 \frac{f- f(0)}{z} =  W_1 B(f) \in \ker T_H.
$$
Repeating this inductively gives $W_1 B^n(f) \in \ker T_H$ for all $n \in \mathbb{N}$, and as $f$ is cyclic for the backward shift and $W_1$ is inner, we can deduce
$$
W_1H^2 \subseteq \ker T_H.
$$
\end{proof}
This demonstrates that the number of maximal functions needed to specify a matricial Toeplitz kernel is highly non-trivial and poses the question: for an arbitrary Toeplitz kernel $\ker T_G$, how large should $k$ be such that we can find $ \phi_1 \hdots \phi_k$ where $\kappa_{min}(\phi_1 \hdots \phi_k ) = \ker T_G$? In this case we call  $ \phi_1 \hdots \phi_k$ a \textit{maximal $k$-tuple of functions} or when $k=2$ a \textit{maximal pair of functions} for $\ker T_G$.

We examine the case further for $n=2$. We have seen if $\dim W=2$ then $\ker T_G$ does not have a maximal function, however we will now show if $\dim W=2$ under certain conditions $\ker T_G$ does have a maximal pair of functions. For a matrix $A$ we denote $C_i(A)$ to be the $i$'th column of $A$.

\begin{prop}
If the decomposition of $\ker T_G$ in \eqref{ker} is such that $\Phi$ is square i.e. if $\ker T_G = [ W_1, W_2] (H^2(\mathbb{D}, \mathbb{C}^2) \ominus \Phi H^2(\mathbb{D}, \mathbb{C}^{2}))$, then $\ker T_G$ has a maximal pair of functions given by 
$[W_1,W_2]C_1(\Phi \overline{z})$, and $[W_1,W_2]C_2(\Phi \overline{z})$.
\end{prop}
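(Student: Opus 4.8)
The plan is to verify that the two candidate functions lie in $\ker T_G$ and then to establish minimality by reducing the matricial statement, via the column-isometry $V := [W_1, W_2]$, to a computation in the square matrix model space $K_\Phi := H^2(\mathbb{D}, \mathbb{C}^2) \ominus \Phi H^2(\mathbb{D}, \mathbb{C}^2) = \ker T_{\Phi^*}$. Writing $f_i := [W_1, W_2] C_i(\Phi\overline{z}) = V\,\Phi\,\overline{z}\,e_i$, the membership $f_i \in \ker T_G = V K_\Phi$ follows once I check that $\Phi\,\overline{z}\,e_i \in K_\Phi$. Since $\Phi$ vanishes at $0$, the vector $\Phi\,\overline{z}\,e_i$ still lies in $H^2(\mathbb{D}, \mathbb{C}^2)$, and because a square matrix inner function $\Phi$ has unitary boundary values a.e. on $\mathbb{T}$ we get $\Phi^*(\Phi\,\overline{z}\,e_i) = \overline{z}\,e_i \in \overline{H^2_0(\mathbb{D}, \mathbb{C}^2)}$, i.e. $\Phi\,\overline{z}\,e_i \in \ker T_{\Phi^*} = K_\Phi$.

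For minimality I would suppose $f_1, f_2 \in \ker T_H$ for an arbitrary bounded symbol $H$ and aim to show $\ker T_G \subseteq \ker T_H$. The key structural fact, again using that a square inner $\Phi$ is unitary a.e. so that $\Phi\Phi^* = I$, is a representation for a general $g \in K_\Phi$: the orthogonality $g \perp \Phi H^2(\mathbb{D}, \mathbb{C}^2)$ gives $\Phi^* g = \overline{zw}$ with $w = (w_1, w_2)^T \in H^2(\mathbb{D}, \mathbb{C}^2)$, whence $g = \Phi\,\overline{zw} = \overline{w_1}\,C_1(\Phi\overline{z}) + \overline{w_2}\,C_2(\Phi\overline{z})$. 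Applying $V$ yields $Vg = \overline{w_1} f_1 + \overline{w_2} f_2$, and therefore $HVg = \overline{w_1}\,(Hf_1) + \overline{w_2}\,(Hf_2)$.

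Now I would feed in $Hf_i \in \overline{H^2_0(\mathbb{D}, \mathbb{C}^n)}$. Since each $\overline{w_j} \in \overline{H^2}$, every summand lies in $\overline{H^2}\cdot\overline{H^2_0} \subseteq \overline{z N^+(\mathbb{D}, \mathbb{C}^n)}$ (a product of conjugate-analytic functions, one of them vanishing at the origin, lands in $\overline{z N^+}$). Thus $HVg \in \overline{z N^+(\mathbb{D}, \mathbb{C}^n)}$; but $g \in H^2(\mathbb{D}, \mathbb{C}^2)$ together with boundedness of $H$ forces $HVg \in L^2$, and the intersection $\overline{z N^+} \cap L^2 = \overline{H^2_0}$ (the conjugate of the fact $N^+ \cap L^2 = H^2$) gives $HVg \in \overline{H^2_0(\mathbb{D}, \mathbb{C}^n)}$, i.e. $Vg \in \ker T_H$. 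As $g \in K_\Phi$ was arbitrary, $\ker T_G = V K_\Phi \subseteq \ker T_H$, which is exactly $\kappa_{min}(f_1, f_2) = \ker T_G$.

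The main obstacle, and precisely the reason the squareness hypothesis on $\Phi$ is indispensable, is the representation step: only when $\Phi$ is square is $\Phi\Phi^* = I$ a.e., which is what allows every $g \in K_\Phi$ to be recovered from $\Phi^* g$ and hence expanded over the two columns $C_1(\Phi\overline{z}), C_2(\Phi\overline{z})$ with conjugate-analytic coefficients. For a genuinely rectangular $\Phi$ with $r' < r$ the operator $\Phi\Phi^*$ is only a proper projection, this expansion fails, and one should not expect two functions to suffice. Once unitarity is exploited, the remaining Smirnov-class intersection argument is routine and runs exactly parallel to the scalar model-space computation underlying Corollary \ref{max}.
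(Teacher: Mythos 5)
Your proposal is correct and follows essentially the same route as the paper: both use the a.e.\ unitarity $\Phi\Phi^*=I$ of the square inner function to expand an arbitrary element of $[W_1,W_2]\ker T_{\Phi^*}$ as $\overline{p_1}\,[W_1,W_2]C_1(\Phi\overline{z})+\overline{p_2}\,[W_1,W_2]C_2(\Phi\overline{z})$, and then conclude via the Smirnov-class intersection $\overline{zN^+}\cap L^2=\overline{H^2_0}$ that any bounded symbol annihilating the two candidate functions annihilates the whole kernel. If anything, your bookkeeping is slightly tighter than the paper's (you keep the factor $\overline{z}$ and land in $\overline{zN^+}$, which is exactly what the final intersection step requires), so no changes are needed.
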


\begin{proof}
When $\Phi$ is square we have $\Phi \Phi^{*} = \Phi^{*} \Phi = I$, and so a computation shows $(H^2(\mathbb{D}, \mathbb{C}^2) \ominus \Phi H^2(\mathbb{D}, \mathbb{C}^{2})) = \ker T_{\Phi^*}$. Then it is clear both vectors are in the required kernel.

Take any $x \in [W_1, W_2](H^2(\mathbb{D}, \mathbb{C}^2) \ominus \Phi H^2(\mathbb{D}, \mathbb{C}^{2})) = [W_1, W_2]\ker T_{\Phi^*}$, then $x = [W_1, W_2] \Phi \begin{pmatrix}
\overline{zp_1} \\
\overline{zp_2}
\end{pmatrix}  = [W_1, W_2] \left( C_1(\Phi\overline{z}) 
\overline{p_1} + C_2(\Phi\overline{z}) 
\overline{p_2} \right)$, for some $p_1, p_2 \in H^2$. If $[W_1,W_2]C_1(\Phi\overline{z}), [W_1,W_2]C_2(\Phi\overline{z}) \in \ker T_H$ for any bounded symbol $H$ then
$$
H [W_1,W_2]C_1(\Phi\overline{z}) \overline{p_1} \in \overline{N^+(\mathbb{D}, \mathbb{C}^2)} \text{ and } H [W_1,W_2]C_1(\Phi\overline{z}) \overline{p_2} \in \overline{N^+(\mathbb{D}, \mathbb{C}^2)}.
$$
Which means
$$
Hx = H  [W_1, W_2] \left( C_1(\Phi\overline{z}) 
\overline{p_1} + C_2(\Phi\overline{z}) 
\overline{p_2} \right)  \in \overline{N^+(\mathbb{D}, \mathbb{C}^2)},
$$
but as $x \in H^2(\mathbb{D}, \mathbb{C}^n)$ and $H$ is bounded we can further conclude $Hx \in \overline{H^2_0(\mathbb{D}, \mathbb{C}^2)}, $ and so $x \in \ker T_H$. Our $x \in \ker T_G$ was arbitrarily chosen so
$$
\ker T_G \subseteq \ker T_H.
$$
Thus $\ker T_G$ has a maximal pair of functions given by 
$$\{[W_1,W_2]C_1(\Phi \overline{z}), [W_1,W_2]C_2(\Phi \overline{z})\}$$.
\end{proof}
\begin{rem}
This result can be extended to show that if $$\ker T_G = [ W_1, W_2, ... W_n] (H^2(\mathbb{D}, \mathbb{C}^n) \ominus \Phi H^2(\mathbb{D}, \mathbb{C}^{n})),$$ then $[ W_1, W_2, ... W_n] C_i ( \Phi \overline{z})$ for $i \in \{ 1 \hdots n \}$ is a maximal n-tuple of  functions for $\ker T_G$.
\end{rem}

\section{Minimal kernel of multiple elements in $H^p$}

As previously mentioned it has been shown in \cite{MR3286053} that every $f \in H^p$ lies in a non-trivial Toeplitz kernel. If we try to consider the minimal kernel of two elements $f,g \in H^p$ we often find that $\kappa_{min}(f,g) = H^p$, furthermore this seems to have a connection to cyclic vectors for the backward shift. This is demonstrated with the following example.
\begin{exmp}
Let $f$ be a cyclic vector for the backward shift on $H^p$, then $\kappa_{min}(f, 1)$ is equal to $H^p$.

If for any symbol $h$, $f, 1 \in \ker T_h$ , then $f - \lambda \in \ker T_h$ for any $\lambda \in \mathbb{C}$. Hence $f - f(0) \in \ker T_h$, and by near invariance of Toeplitz kernels $\frac{f-f(0)}{z} = B(f) \in \ker T_h$. We can repeat this process inductively to give $B^n(f) \in \ker T_h$, for all $n \in \mathbb{N}$ and as $f$ is cyclic, we deduce $H^p \subseteq \ker T_h$.
\end{exmp}

The following theorem gives a sufficient condition for a given function $g$ to be the symbol of a Toeplitz operator whose kernel is the minimal kernel of a given set of functions in $H^p$. This result may be viewed as a partial generalisation of Theorem 2.2 in \cite{MR3806717}.

\begin{thm}\label{premain}
If $f_1 \hdots f_k \in H^p$ and $ g \in L^{\infty}$ are such that $gf_j = \overline{zp_j}$ for some $p_j \in H^p$ and $GCD(p_1^i \hdots p_k^i)=1$, then $\kappa_{min}(f_1 \hdots f_k) = \ker T_g$.
\end{thm}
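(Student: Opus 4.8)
The plan is to verify the two conditions defining a minimal kernel. Membership, $f_j \in \ker T_g$ for each $j$, is immediate: the hypothesis $g f_j = \overline{z p_j}$ places $g f_j$ in $\overline{H^p_0}$, so $T_g f_j = P(g f_j) = 0$. All of the content lies in minimality, so I fix an arbitrary bounded symbol $h$ with $f_1, \ldots, f_k \in \ker T_h$ and an arbitrary $f \in \ker T_g$, and aim to prove $f \in \ker T_h$; this gives $\ker T_g \subseteq \ker T_h$ and hence $\ker T_g = \kappa_{min}(f_1, \ldots, f_k)$.

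First I would extract the algebraic relations forced by the two kernels. Since $f \in \ker T_g$, I can write $g f = \overline{z q}$ for some $q \in H^p$. Cross-multiplying the boundary identities $g f_j = \overline{z p_j}$ and $g f = \overline{z q}$ (both sides then equalling $g f f_j$) and cancelling the unimodular factor $\overline{z}$ gives $\overline{p_j}\, f = \overline{q}\, f_j$ on $\mathbb{T}$ for every $j$. Next, using $f_j \in \ker T_h$, I write $h f_j = \overline{z r_j}$ with $r_j \in H^p$; multiplying the previous identity by $h$ and taking conjugates yields $\overline{h f}\, p_j = z\, r_j q$ for every $j$. Assuming $q \not\equiv 0$ (the case $q \equiv 0$ forces $h f = 0$ a.e.\ since each $p_j \neq 0$ a.e., and is then trivial), this says that the meromorphic function $R := \overline{h f} / (z q)$ is independent of $j$ and satisfies $r_j = R\, p_j$ for all $j$.

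The crux of the argument — and the only place the hypothesis $GCD(p_1^i, \ldots, p_k^i) = 1$ enters — is to show $R \in N^+$. I would use the canonical factorisation of $R$ in the Nevanlinna class, $R = (\alpha / \beta)\, O$, where $\alpha, \beta$ are coprime inner functions and $O$ is outer. Because $R\, p_j = r_j \in H^p \subseteq N^+$, the singular denominator $\beta$ must be absorbed by the inner part of $p_j$; precisely, coprimality of $\alpha$ and $\beta$ forces $\beta \mid p_j^i$ for each $j$. Hence $\beta$ divides $GCD(p_1^i, \ldots, p_k^i) = 1$, so $\beta$ is constant and $R \in N^+$. This divisibility step, resting on the factorisation theory of inner functions, is where I expect the main difficulty to lie, since it is the mechanism that converts the arithmetic coprimality hypothesis into analytic regularity of $R$.

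Finally I would assemble the conclusion. With $R \in N^+$ and $q \in H^p \subseteq N^+$, the product $\overline{h f} = z R q$ lies in $N^+$ (an algebra); it also lies in $L^p$ because $h$ is bounded and $f \in H^p$. By $N^+ \cap L^p = H^p$ we obtain $\overline{h f} \in H^p$, and since it carries the factor $z$ it vanishes at the origin, so $\overline{h f} \in H^p_0$, i.e.\ $h f \in \overline{H^p_0}$ and $f \in \ker T_h$. As $f \in \ker T_g$ was arbitrary, $\ker T_g \subseteq \ker T_h$, which is exactly the minimality required.
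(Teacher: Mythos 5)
Your proof is correct, and its skeleton matches the paper's: membership is immediate, and for minimality you derive exactly the relation the paper derives (the paper solves $g = \overline{zp_j}/f_j$ to get $x = f_j p_j^i \overline{p}/\overline{p_j^o}$ for $x \in \ker T_g$; your identity $\overline{p_j}\,f = \overline{q}\,f_j$ is the same relation), then multiply by $h$ and use coprimality of the $p_j^i$ to force $P(hf)=0$. Where you genuinely differ is in how coprimality is converted into that conclusion. The paper argues $j$-by-$j$: it shows $hx \in p_j^i\,\overline{H^p_0}$ for every $j$ (via Proposition \ref{quotientouter}), hence $P(hx) \in K_{p_j^i}$ for every $j$, and then invokes the model-space lattice identity $\bigcap_j K_{p_j^i} = K_{GCD(p_1^i,\ldots,p_k^i)} = K_1 = \{0\}$. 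You instead package all the $j$'s into the single bounded-type function $R = \overline{hf}/(zq) = r_j/p_j$ and exploit uniqueness of its coprime factorisation $R = (\alpha/\beta)O$: the denominator $\beta$ divides every $p_j^i$, hence divides their GCD, hence is constant, so $R \in N^+$ and $N^+\cap L^p = H^p$ finishes. The two mechanisms are morally equivalent — the intersection identity for model spaces is itself proved by this kind of divisibility argument — but yours is self-contained modulo the canonical factorisation of quotients of $H^p$ functions, while the paper's is shorter on the page because it outsources that step to the $K_\theta$ lattice. Two small points to tidy in your write-up: $\beta$ need not be \emph{singular} (since $R$ may genuinely have poles, $\beta$ can contain a Blaschke factor; the argument is unaffected), and the identification of the boundary function $\overline{hf}/(zq)$ with one meromorphic function independent of $j$ deserves a line of justification — from $Rp_j = r_j$ and $Rp_l = r_l$ one gets $r_j p_l = r_l p_j$ a.e.\ on $\mathbb{T}$, and both sides lie in $N^+$, so they agree on $\mathbb{D}$.
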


\begin{proof}
It is clear that $f_j \in \ker T_g$ for all $j$. We can write $g$ as $g= \frac{\overline{zp_j}}{f_j}$, then for all $x \in \ker T_g$ we have have $xg=\overline{zp}$ for some $p \in H^p$. Substituting our expression for $g$ in to $xg=\overline{zp}$ we may write $\frac{x\overline{zp_j}}{f_j}= \overline{zp}$, and so $x= \frac{f_j p_j^i \overline{p}}{\overline{p_j^o}}$ and then $hx=\frac{p_j^i(hf_j) \overline{p}}{\overline{p_j^o}} \in L^p$. Therefore if $f_j \in \ker T_h$, by Proposition \ref{quotientouter} $\frac{(hf_j) \overline{p}}{\overline{p_j^o}} \in \overline{zN^+} \cap L^p = \overline{H^p_0}$, which means $hx=\frac{p_j^i(hf_j) \overline{p}}{\overline{p_j^o}} \in p_j^i\overline{H^p_0}$, so $P(hx) \in K_{p_j^i}$ for all $j$. As $GCD(p_1^i \hdots p_n^i)=1$ this means $P(hx) \in \bigcap_{j} K_{p_j^i} = K_{1} = \{ 0 \}$. We conclude $x \in \ker T_h$ and then $\ker T_g \subseteq \ker T_h$.
\end{proof}
Although the following corollary can also be obtained from Corollary 5.1 in \cite{MR3286053}, we give an alternate proof.
\begin{cor}\label{max existence}
Every non-trivial scalar Toeplitz kernel has a maximal function.
\end{cor}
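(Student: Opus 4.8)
The plan is to invoke Theorem \ref{premain} in its degenerate single-function case $k=1$. That theorem tells us that a single $\phi \in H^p$ with $g\phi = \overline{zp}$ for some outer $p$ (so that $GCD(p^i)=1$ holds trivially) already satisfies $\kappa_{min}(\phi) = \ker T_g$. Hence it suffices to exhibit one nonzero element of $\ker T_g$ whose associated $H^p$-partner $p$ is outer; by Theorem \ref{premain} such an element is automatically a maximal function for $\ker T_g$.

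To produce it, I would start from any nonzero $\phi_0 \in \ker T_g$, which exists because the kernel is non-trivial. By definition $g\phi_0 = \overline{z p_0}$ for some $p_0 \in H^p$; since the symbol of a proper, nonzero Toeplitz kernel satisfies $g \not\equiv 0$ while $\phi_0$ is nonzero a.e. on $\mathbb{T}$, we get $p_0 \neq 0$ and may factor $p_0 = p_0^i p_0^o$. The key step is to multiply through by the inner factor rather than to divide it out: set $\phi := \phi_0\, p_0^i$. Because $p_0^i \in H^\infty$ we have $\phi \in H^p$, and on $\mathbb{T}$
$$ g\phi = p_0^i\, g\phi_0 = p_0^i\,\overline{z\, p_0^i p_0^o} = |p_0^i|^2\,\overline{z\, p_0^o} = \overline{z\, p_0^o}, $$
using $|p_0^i| = 1$ a.e. Thus $\phi \in \ker T_g$ and its associated partner $p_0^o$ is outer.

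With this $\phi$ in hand, applying Theorem \ref{premain} with $k=1$, $p_1 = p_0^o$, so that $p_1^i \equiv 1$ and $GCD(p_1^i)=1$, yields $\kappa_{min}(\phi) = \ker T_g$, which is precisely the statement that $\phi$ is a maximal function. The only genuine idea here is the observation that multiplying a kernel element by the inner part of its partner $p_0$ simultaneously keeps the function in $H^p$ and cancels the inner factor of $p$; once $p$ is outer, Theorem \ref{premain} does all the remaining work. The one point deserving a word of care is the degenerate possibility $p_0 = 0$, which I would dispose of by noting that it forces $g \equiv 0$ and hence $\ker T_g = H^p$, the non-proper kernel that carries no maximal function and is excluded by non-triviality.
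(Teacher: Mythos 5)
Your proof is correct and takes essentially the same route as the paper's: both specialise Theorem \ref{premain} to $k=1$ and turn an arbitrary nonzero kernel element $\phi_0$ (with $g\phi_0=\overline{zp_0}$) into a maximal function by multiplying by the inner factor $p_0^i$, so that the new partner $p_0^o$ is outer. Your closing remark handling the degenerate case $p_0=0$ (forcing $g\equiv 0$) is a harmless addition that the paper leaves implicit.
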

\begin{proof}
Specialising the above theorem to $k=1$, we see that if there exists an $f \in H^p$ such that $gf = \overline{zp}$ where $p \in H^p$ is outer then $\kappa_{min}(f) = \ker T_g$. If $\ker T_g$ is non-trivial then there exists a $f'$ such that $g f' = \overline{z p'}$ for some $p' \in H^p$, multiplying both sides of this equality by $(p')^i$ we see that $f' (p')^i$ is a maximal function.
\end{proof}
\begin{rem}
Using the above corollary, we also obtain an explicit expression for a maximal function in a non-trivial Toeplitz kernel (when the symbol for the Toeplitz operator is known). This expression can also be derived from Theorem 2.2 in \cite{MR3806717}.
\end{rem}
The following corollary can also be proved as a consequence of Theorem 2.2 in \cite{MR3806717}, but again we write a proof here.
\begin{cor}\label{max function decomp}
If $m$ is a maximal function for $\ker T_g$ then $\ker T_g = m \overline{N^+} \cap H^p$.
\end{cor}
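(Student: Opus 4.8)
The plan is to normalize the symbol first and then prove the two inclusions by elementary boundary-function manipulations. Since $m$ is a maximal function, $\kappa_{min}(m) = \ker T_g$. Applying the scalar case ($n=1$) of Theorem \ref{kmin} gives $\kappa_{min}(m) = \ker T_{\overline{m}\,\overline{z}/m^o}$, so $\ker T_g = \ker T_{\overline{m}\,\overline{z}/m^o}$. As the target set $m\overline{N^+}\cap H^p$ does not refer to the symbol at all, I may replace $g$ by $\overline{m}\,\overline{z}/m^o$. With this choice a one-line computation on $\mathbb{T}$, using $\overline{m^i}m^i = 1$, gives $gm = \overline{z m^o}$; writing $q := m^o$, which is outer and lies in $H^p$, the whole problem is reduced to the single structural identity $gm = \overline{zq}$ with $q$ outer (and in particular $g, m \neq 0$ a.e.).

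For the inclusion $\ker T_g \subseteq m\overline{N^+}\cap H^p$, I would take $x \in \ker T_g$, so that $gx = \overline{zr}$ for some $r \in H^p$, and then divide the two identities to obtain
$$
\frac{x}{m} = \frac{gx}{gm} = \frac{\overline{zr}}{\overline{zq}} = \overline{\left(\frac{r}{q}\right)}.
$$
Since $q$ is outer and $r \in H^p \subseteq N^+$, the quotient $r/q$ lies in $N^+$; this is the stability of $N^+$ under division by outer functions, which follows from the factorization in Proposition \ref{quotientouter} (write $r = b\,s_{\mu_1}\,r^o$ and note $r^o/q$ is again outer). Hence $x = m\,\overline{(r/q)} \in m\overline{N^+}$, and as $x \in H^p$ this gives $x \in m\overline{N^+}\cap H^p$.

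For the reverse inclusion I would take $x \in m\overline{N^+}\cap H^p$, so $x = m\overline{h}$ with $h \in N^+$ and $x \in H^p$. Then $gx = (gm)\overline{h} = \overline{zq}\,\overline{h} = \overline{zqh}$, where $qh \in N^+$ because $q \in H^p \subseteq N^+$ and $N^+$ is an algebra. Thus $gx \in \overline{zN^+}$; but $gx \in L^p$ since $x \in H^p$ and $g \in L^\infty$, so $gx \in \overline{zN^+}\cap L^p = \overline{H^p_0}$ by the identity $N^+\cap L^p = H^p$ recorded after Proposition \ref{quotientouter}. Therefore $x \in \ker T_g$, completing the equality.

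The one genuinely delicate point is the normalization in the first step: for an \emph{arbitrary} maximal function the given symbol need not already satisfy $gm = \overline{z\cdot(\text{outer})}$, and a direct argument that the inner factor of $p$ in $gm = \overline{zp}$ is trivial is fiddly. Routing through the canonical symbol $\overline{m}\,\overline{z}/m^o$ furnished by Theorem \ref{kmin} sidesteps this cleanly, since thereafter only two standard facts are needed — invariance of $N^+$ under division by outer functions and $N^+\cap L^p = H^p$ — and the rest is boundary bookkeeping.
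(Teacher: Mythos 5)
Your proof is correct. It follows the same two-inclusion skeleton as the paper's proof --- your $\supseteq$ direction is essentially the same boundary computation, and your $\subseteq$ direction the same division argument --- but you justify the one non-trivial structural fact by a different route. The paper works with the original symbol $g$ and asserts, with no supporting argument, that $gm=\overline{zp_1^o}$ with $p_1^o$ outer for an \emph{arbitrary} maximal function $m$; that is exactly the point you flag as delicate. You sidestep it by passing to the canonical symbol $\overline{m}\,\overline{z}/m^o$ from the scalar ($n=1$) case of Theorem~\ref{kmin}, which is legitimate since $\ker T_g=\kappa_{min}(m)=\ker T_{\overline{m}\,\overline{z}/m^o}$ and the set identity to be proved involves only the kernel and $m$; for that symbol the relation $gm=\overline{zm^o}$ is a one-line computation, and everything else goes through. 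One correction to your closing remark, though: the outerness property in fact \emph{does} hold for the original symbol and every maximal function, but proving it needs an argument the paper omits. For instance: write $gm=\overline{zq}$; then $gq^im=\overline{zq^o}$ shows $m\in\ker T_{gq^i}$, so maximality gives $\ker T_g\subseteq\ker T_{gq^i}$; the particular maximal function $m_1$ produced in Corollary~\ref{max existence} satisfies $gm_1=\overline{zq_1}$ with $q_1$ outer, and membership $m_1\in\ker T_{gq^i}$ forces $\overline{q^i}q_1\in H^p$, i.e.\ $q^i$ divides the (constant) inner factor of $q_1$, so $q^i$ is constant. So the paper's assertion is a genuine but repairable gap; your normalization buys a self-contained proof with no hidden step, at the cost of invoking the heavier Theorem~\ref{kmin} --- which creates no circularity, since that theorem is proved in Section 2 independently of the material in Sections 3 and 4.
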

\begin{proof}
We first show the $\supseteq$ inclusion. As $m \in \ker T_g$, we must have $mg \overline{p} \in \overline{z N^+}$ for all $p \in N^+$, so consequently if $m \overline{p} \in H^p$ we would have $gm \overline{p} \in \overline{H^p_0}$. To show the $\subseteq$ inclusion we note that $gm = \overline{zp_1^o}$ where $p_1^o$ is an outer function in $H^p$, and if $f \in \ker T_g$ then $gf = \overline{zp_2}$ where $p_2 \in H^p$. Solving these expressions for $f$ we see $f= m \frac{\overline{zp_2}}{\overline{p_1^o}} \in m \overline{N^+} \cap H^p$.
\end{proof}
For clarity in the following theorem we will write $span^{N^+}$ to mean the closed linear span in $N^+$, and we will write $span$ to mean the linear span.

\begin{thm}\label{min}
Let $f,g \in H^p$. If $ \frac{g}{f^o}$ is cyclic for the backward shift on $N^+$ then $\kappa_{min}(f, g) = H^p$.
\end{thm}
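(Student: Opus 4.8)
The plan is to show that \emph{every} bounded symbol $h$ with $f,g \in \ker T_h$ must in fact satisfy $\ker T_h = H^p$; since $H^p = \ker T_0$ is itself a Toeplitz kernel containing $f$ and $g$, this forces $\kappa_{min}(f,g) = H^p$. So fix such an $h$ and set $F := g/f^o$, which lies in $N^+$ because $f^o$ is outer (hence $1/f^o \in N^+$) and $N^+$ is an algebra. The first move is to reduce $f$ to its outer part: since $f \in \ker T_h$ and $\overline{f^i} f = f^o \in H^p$, near invariance of Toeplitz kernels applied with the inner function $f^i$ gives $f^o \in \ker T_h$. Thus both $f^o$ and $g = F f^o$ lie in $\ker T_h$.

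Next I would generate the whole backward-shift orbit of $F$, scaled by $f^o$. Because $\ker T_h$ is a linear space, $g - F(0) f^o = (F - F(0)) f^o$ lies in $\ker T_h$ and vanishes at $0$, so near invariance with $\eta = z$ yields $\frac{(F-F(0))f^o}{z} = B(F)\, f^o \in \ker T_h$. Iterating this (subtracting the correct scalar multiple of $f^o$ at each stage and dividing by $z$) gives $B^n(F)\, f^o \in \ker T_h$ for every $n \geq 0$, and hence $\phi\, f^o \in \ker T_h$ for every $\phi$ in the linear span of $\{B^n F : n \geq 0\}$.

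The final step passes to the limit, and this is where cyclicity and the topology of $N^+$ enter. Given an arbitrary $\psi \in H^p$, the quotient $\psi/f^o$ lies in $N^+$, so cyclicity of $F$ provides $\phi_m$ in $\mathrm{span}\{B^n F\}$ with $\phi_m \to \psi/f^o$ in the metric $\rho$. Continuity of multiplication in the topological algebra $\log L$ gives $\phi_m f^o \to \psi$ and hence $h\,\phi_m f^o \to h\psi$ in $\rho$. Each $h\,\phi_m f^o$ lies in $\overline{zN^+}$ (as $\phi_m f^o \in \ker T_h$), and the point is that $\overline{zN^+}$ is $\rho$-closed in $\log L$: this follows because $N^+$ is $\rho$-closed (being the closure of the analytic polynomials) and multiplication by $z$ and by $\overline{z}$ are $\rho$-homeomorphisms. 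Consequently $h\psi \in \overline{zN^+}$, and since $h\psi \in L^p$ we obtain $h\psi \in \overline{zN^+}\cap L^p = \overline{H^p_0}$, i.e. $\psi \in \ker T_h$. As $\psi$ was arbitrary, $\ker T_h = H^p$.

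I expect the genuine obstacle to be this last step: cyclicity only provides approximation in the weak metric $\rho$, so one must argue that the Toeplitz condition survives the $\rho$-limit. The resolution rests entirely on the closedness of $\overline{zN^+}$ in $(\log L, \rho)$ together with the $\rho$-continuity of multiplication by the bounded symbol $h$; once these topological facts are in hand, the earlier steps are routine applications of near invariance and linearity.
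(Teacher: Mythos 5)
Your proof is correct and follows essentially the same route as the paper's: near invariance to replace $f$ by $f^o$, generation of the orbit $f^o B^n(g/f^o)$ inside $\ker T_h$, and then a $\rho$-limit argument resting on cyclicity, the topological-algebra structure of $\log L$, the closedness of $N^+$ (equivalently of $\overline{zN^+}$) in $\log L$, and Smirnov's identity $N^+\cap L^p = H^p$. The only cosmetic difference is that the paper takes closures of both sides of the set inclusion $\mathrm{span}\{f^o B^n(g/f^o)\}\subseteq \ker T_h$ and shows $\ker T_h$ is closed in the $H^p$-subspace topology of $N^+$ (working with $\overline{zhx}\in N^+$), whereas you argue element-wise with the closed set $\overline{zN^+}$; these are the same argument.
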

\begin{proof}
For any bounded $h$, if $f, g \in \ker T_h$ then near invariance shows $f^o \in \ker T_h$, and so for any $\lambda \in \mathbb{C}$, $$g - \lambda f^o = f^o(\frac{g}{f^o} - \lambda) \in \ker T_h.$$ Letting $\lambda = \frac{g}{f^o}(0)$ we see that $$f^o(\frac{g}{f^o} - \frac{g}{f^o}(0)) \in \ker T_h,$$ and near invariance gives $$f^o\frac{(\frac{g}{f^o} - \frac{g}{f^o}(0))}{z} = f^oB(\frac{g}{f^o}) \in \ker T_h. $$
We can repeat this process inductively to give \begin{equation}\label{closure}
  span\{ f^oB^n(\frac{g}{f^o}) \} \subseteq \ker T_h. 
\end{equation}

We now take the closure of both sides of this set inclusion in the $H^p$ subspace topology of $N^+$. We first show $span^{N^+}\{ f^oB^n(\frac{g}{f^o}) \} = N^+$.

We have $f^o \in N^+$ and $B^n(\frac{g}{f^o}) \in N^+$, so as $N^+$ is closed under multiplication $\{ f^oB^n(\frac{g}{f^o}) \} \subseteq N^+$ and hence $span^{N^+}\{ f^oB^n(\frac{g}{f^o}) \} \subseteq N^+$, so one set inclusion is clear. We now show $ N^+$ is contained in $span^{N^+}\{ f^oB^n(\frac{g}{f^o}) \} $. Take any $ x \in N^+$ then as $\frac{g}{f^o}$ is cyclic for $N^+$ and $\frac{x}{f^o} \in N^+$ there exists an $(x_k) \subseteq $span$\{ B^n(\frac{g}{f^o}) \}$ such that $x_k \to \frac{x}{f^o}$ in $N^+$. Then as $N^+$ is a topological algebra we must have $f^o x_k \to x$ in $N^+$. So the closure of the left hand side of \eqref{closure} in the $H^p$ subspace topology of $N^+$ is equal to $ N^+ \cap H^p = H^p$.

The  closure of the right hand side of \eqref{closure} in the $H^p$ subspace topology of $N^+$ is the closure of $\ker T_h$ in $N^+$ intersected with $H^p$. This can be seen to equal $\ker T_h$ via the following observation. Let $
x_k \in \ker T_h \subseteq N^+$ be such that $x_k \to x$ in $\log L$ (or equivalently $N^+$), then as $\log L$ is a topological algebra $\overline{z h x_k} \to \overline{z h x}$ in $\log L$. As $ \overline{z h x_k} \in N^+ $ and $N^+$ is closed in $\log L$ so we must have $ \overline{z h x} \in N^+ $. If $x \in H^p $ then $\overline{z h x} \in N^+ \cap L^p = H^p$ so $x \in \ker T_h$. We conclude
$$
H^p \subseteq \ker T_h.
$$
\end{proof}

\begin{cor}
Let $f_1 \hdots f_k \in H^p$. If for any pair $f_j, f_l$ with $j,l = 1 \hdots k$, $\frac{f_j}{f_l^o}$ is a cyclic vector for the backward shift on $N^+$, then $\kappa_{min}(f_1 \hdots f_k) = H^p$.
\end{cor}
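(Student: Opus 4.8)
The plan is to reduce the $k$-element assertion to the two-element result already established in Theorem \ref{min}. Recall that verifying $\kappa_{min}(f_1, \dots, f_k) = H^p$ amounts to two things: first, that $H^p$ is itself a Toeplitz kernel containing every $f_j$, which is immediate since $H^p = \ker T_0$ and each $f_j \in H^p$; and second, that any bounded symbol $h$ with $f_1, \dots, f_k \in \ker T_h$ forces $\ker T_h = H^p$. It is this second, minimality, statement that carries all the content.

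Accordingly, I would begin by fixing an arbitrary bounded symbol $h$ and supposing $f_1, \dots, f_k \in \ker T_h$. Since $k \geqslant 2$, I may single out the pair $f_1, f_2$; in particular $f_1, f_2 \in \ker T_h$. By hypothesis the ratio $\frac{f_2}{f_1^o}$ is a cyclic vector for the backward shift on $N^+$, this being the instance $j = 2$, $l = 1$ of the assumed cyclicity over all pairs.

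Next I would invoke Theorem \ref{min} with the identification $f = f_1$ and $g = f_2$, so that $\frac{g}{f^o} = \frac{f_2}{f_1^o}$ is cyclic as the theorem requires. This yields $\kappa_{min}(f_1, f_2) = H^p$; unwinding the definition of the minimal kernel of the pair, this is exactly the statement that the containment $f_1, f_2 \in \ker T_h$ already forces $H^p \subseteq \ker T_h$, whence $\ker T_h = H^p$. As $h$ was an arbitrary symbol whose Toeplitz kernel contains all the $f_j$, the smallest such kernel is $H^p$, which is the claim.

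There is essentially no genuine obstacle here beyond bookkeeping: the whole inductive and limiting machinery, the passage through near invariance, the construction of the span $\{f_1^o B^n(\frac{f_2}{f_1^o})\}$, and the closure in the $H^p$-subspace topology of $N^+$, has already been carried out inside the proof of Theorem \ref{min}. Indeed the $k$-element hypothesis is stronger than needed, since cyclicity of a single suitable pair suffices to drive the conclusion. The only point worth remarking is that the argument is insensitive to which pair one selects, so imposing the cyclicity condition symmetrically over all pairs is more than enough to guarantee $\kappa_{min}(f_1, \dots, f_k) = H^p$.
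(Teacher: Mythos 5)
Your proposal is correct and matches the paper's intent exactly: the paper states this corollary without proof as an immediate consequence of Theorem \ref{min}, and your argument---fix a single pair, say $f_1, f_2$, note that any Toeplitz kernel containing all the $f_j$ contains that pair, and invoke the theorem to force $\ker T_h = H^p$---is precisely the omitted reduction. Your observation that cyclicity of one pair already suffices (so the all-pairs hypothesis is stronger than needed) is also accurate.
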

We now find a minimal kernel for when $\frac{g}{f^o}$ is not a cyclic vector for the backward shift. It is immediate that if $\frac{g}{f^o}$ is not cyclic for $N^+$ then it lies inside some $B$ invariant subspace, and so to further understand $\kappa_{min}(f,g) $ we must discuss the $B$ invariant subspaces of $N^+$. As far as the author is aware the $B$ invariant subspaces of $N^+$ have not been described, however the following (unproved) conjecture is due to Aleksandrov and can be found in section 11.15 of \cite{havin2006linear}.
\begin{conj}
\end{conj}

The $B$ invariant subspaces of $N^+$ depend on three parameters:
\begin{enumerate}
 \item{An inner function $\theta$.}
 \item{A closed set $F \subseteq \mathbb{T}$ with $\sigma(\theta) \cap \mathbb{T} \subseteq F$, where $$\sigma(\theta)= \{ z \in \mathbb{D}^- : \lim \inf_{\lambda \xrightarrow{} z} |\theta(\lambda)| =0 \}$$ is the spectrum of an inner function $\theta$.}
 \item{A function $k: F \to \mathbb{N} \cup \{\infty\}$ with the additional property
 $
k( \eta ) = \infty
 $
 for all $\eta \in \sigma( \theta ) \cap \mathbb{T}$ and for all non-isolated points $\eta \in F$.}
\end{enumerate}

Define $\mathcal{E}(\theta, F, k)$ to be the set of $f \in N^+$ with:
\begin{enumerate}
    \item{  $\overline{z}\theta \overline{f} \in N^+$.}
    \item{$f$ has an meromorphic continuation $\tilde{f}$ to a neighbourhood of $\hat{\mathbb{C}} \setminus F$.}
    \item{$\eta$ is a pole of $\tilde{f}$ of order at most $k(\eta)$ for all $\eta \in F$ with $k(\eta) \neq \infty$. }
\end{enumerate}
Then $\mathcal{E}(\theta, F, k)$ is a proper $B$ invariant subspace of $N^+$ and for every non-trivial $B$ invariant subspace $\mathcal{E} \subset N^+$, there is a triple $(\theta, F, k)$ such that $\mathcal{E} =\mathcal{E}(\theta, F, k) $.
 \vskip 1cm
 
We will focus on $B$ invariant subspaces of $N^+$ of the form $\{ f \in N^+ : \overline{z} \theta \overline{f} \in N^+ \} =:\theta^* (N^+)$, where $\theta$ is some fixed inner function and the above multiplication is understood on $\mathbb{T}$. We call $B$ invariant subspaces of this form \textit{one component} $B$ invariant subspaces.

\begin{prop}\label{p2}
Let $\tau$ be a family of inner functions, then $$ \bigcap_{\theta \in \tau} \theta^*(N^+) = GCD( \tau  )^*(N^+). $$
\end{prop}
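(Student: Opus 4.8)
The plan is to prove the two inclusions $g^*(N^+) \subseteq \bigcap_{\theta \in \tau} \theta^*(N^+)$ and $\bigcap_{\theta \in \tau} \theta^*(N^+) \subseteq g^*(N^+)$ separately, where $g := GCD(\tau)$; the first is a soft consequence of a monotonicity principle and the second carries the real content. The monotonicity I would record first is: if $I_1 \mid I_2$ then $I_1^*(N^+) \subseteq I_2^*(N^+)$. Writing $I_2 = I_1 J$ with $J$ inner, if $\overline{z} I_1 \overline{f} \in N^+$ then $\overline{z} I_2 \overline{f} = J\,(\overline{z} I_1 \overline{f}) \in N^+$, since $J \in H^\infty \subseteq N^+$ and $N^+$ is closed under multiplication. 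As $g$ divides every $\theta \in \tau$, this gives $g^*(N^+) \subseteq \theta^*(N^+)$ for each $\theta$, hence $g^*(N^+) \subseteq \bigcap_{\theta \in \tau} \theta^*(N^+)$.

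For the hard inclusion the first step is bookkeeping on the cofactors. Writing $\theta = g\,\alpha_\theta$ for each $\theta \in \tau$, I would check that the family of cofactors is coprime, i.e. $GCD\{\alpha_\theta : \theta \in \tau\} = 1$: any common inner divisor $d$ of all the $\alpha_\theta$ would make $gd$ a common divisor of all of $\tau$, forcing $gd \mid g$ by maximality of $g$, so $d$ is constant. Now fix $f \in \bigcap_{\theta} \theta^*(N^+)$ (assume $f \neq 0$ and $\tau \neq \emptyset$, the other cases being trivial) and set $w := \overline{z}\, g\, \overline{f}$; the goal is precisely $w \in N^+$, which says $f \in g^*(N^+)$. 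For every $\theta$ we have $\overline{z}\,\theta\,\overline{f} = \alpha_\theta\, w \in N^+$, so $w$ is a fixed function whose only possible obstruction to lying in $N^+$ is an inner ``denominator'' controlled by the $\alpha_\theta$.

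To isolate that denominator I would fix one $\theta_0 \in \tau$ and factor $h_{\theta_0} := \overline{z}\,\theta_0\,\overline{f} = h^i h^o \in N^+$ into inner and outer parts via Proposition \ref{quotientouter}, so that $w = h_{\theta_0}/\alpha_{\theta_0}$. Cancelling $GCD(h^i, \alpha_{\theta_0})$ writes $w = (\tilde h / \tilde\alpha)\, h^o$ with $\tilde h, \tilde\alpha$ inner and coprime and $h^o$ outer; here $w \in N^+$ exactly when $\tilde\alpha$ is constant. The engine is the elementary fact that for outer $h^o \neq 0$ and inner $I_1, I_2$ one has $(I_1/I_2)\, h^o \in N^+$ if and only if $I_2 \mid I_1$ (compare inner parts in $I_1 h^o = I_2 \psi$). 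Applying this to $\alpha_\theta\, w = (\alpha_\theta \tilde h / \tilde\alpha)\, h^o \in N^+$ and using $GCD(\tilde\alpha, \tilde h) = 1$ yields $\tilde\alpha \mid \alpha_\theta$ for every $\theta$; hence $\tilde\alpha \mid GCD\{\alpha_\theta\} = 1$, so $\tilde\alpha$ is constant and $w \in N^+$.

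I expect the genuine obstacle to be the hard inclusion, and within it the conceptual point that $w = \overline{z}\, g\, \overline{f}$ should be treated as a single element whose failure to lie in $N^+$ is measured by one inner denominator $\tilde\alpha$; once this is extracted, coprimeness of the cofactors collapses $\tilde\alpha$ to a constant. The subtlety I would flag is the step from finitely many $\theta$ to an arbitrary (possibly uncountable) family $\tau$: this needs no limiting argument, because $\tilde\alpha$ is extracted from a single $\theta_0$ and then shown to divide every cofactor, so the passage to $GCD\{\alpha_\theta\} = 1$ is uniform over $\tau$.
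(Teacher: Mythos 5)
Your proof is correct, but it takes a genuinely different route from the paper's. The paper proves the hard inclusion by approximation: it invokes Beurling's theorem in the form $\overline{\mathrm{span}}^{H^2}\{\theta H^2 : \theta \in \tau\} = GCD(\tau)H^2$, picks $h_n \in \mathrm{span}\{\theta H^2\}$ with $h_n \to GCD(\tau)$ in $H^2$, observes that $\overline{z}h_n\overline{f} \in N^+$ because $N^+$ is an algebra, and then passes to the limit using the facts (established earlier in the paper) that $\log L$ is a topological algebra and $N^+$ is closed in it. Your argument is instead purely divisibility-theoretic: you extract the inner ``denominator'' $\tilde\alpha$ of $w = \overline{z}\,g\,\overline{f}$ from a single $\theta_0$ via inner--outer factorization, show $\tilde\alpha \mid \alpha_\theta$ for every cofactor using the elementary fact that $(I_1/I_2)h^o \in N^+$ forces $I_2 \mid I_1$, and collapse $\tilde\alpha$ to a constant by coprimeness of the cofactors. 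What the paper's route buys is economy within its own framework --- the $\log L$ machinery is already set up and is reused in the same way in its Theorem on cyclicity (Theorem \ref{min}), so the limit argument is three lines; it does, however, lean on the topological structure of $\log L$ and on an approximation step. What your route buys is self-containedness at the level of inner--outer factorization and the divisibility lattice of inner functions: no topology on $N^+$ or $\log L$ is needed, no limiting process occurs, and, as you note, the passage to an arbitrary (possibly uncountable) family $\tau$ is automatic since $\tilde\alpha$ is produced once and compared against each cofactor separately. Both proofs ultimately rest on the existence and universal property of the $GCD$ of an arbitrary family of inner functions (itself a consequence of Beurling's theorem), the paper explicitly and you implicitly; the only point worth flagging in your write-up is that the identities like $\alpha_{\theta_0} w = h_{\theta_0}$ are equalities of boundary functions, so one should note that two $N^+$ functions with a.e.\ equal boundary values coincide (a nonzero Smirnov function cannot vanish on a set of positive measure) before comparing inner parts --- a standard point that does not affect correctness.
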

\begin{proof}
The $\supseteq $ is clear. To prove the $\subseteq$ inclusion we start with the fact that the $H^2$ closure of $span \{ \theta H^2 : \theta \in \tau \}$ is equal to $GCD( \tau) H^2$. This means we can find a sequence $ h_n \in span\{ \theta H^2 : \theta \in \tau \}$ such that $ h_n \to GCD( \tau) $ in the $H^2$ norm (which also then implies convergence in $N^+$). So if $f \in \cap_{\theta \in \tau} \theta^*(N^+) $, then $\overline{z} \theta \overline{f} \in N^+$ for all $\theta \in \tau$, in particular as $N^+$ is an algebra $\overline{z} h_n \overline{f} \in N^+$. Taking the limit in the metric of $\log L$, noting $\log L$ is a topological algebra and $N^+$ is closed we see that $\overline{z} GCD( \tau ) \overline{f} \in N^+ $. 
\end{proof}
Although the $B$ invariant subspaces of $N^+$ have not been completely described, there is a partial result showing all $B$ invariant subspaces of $N^+$ are contained in a one component $B$ invariant subspace. The following can be found as Corollary 1, page 42 in \cite{havin2006linear}.
\begin{prop}\label{bigger space}
Given a non-trivial $B$ invariant subspace of $N^+$, $\mathcal{E}$, there exists an inner function $I$ such that $\mathcal{E} \subseteq I^*(N^+).$
\end{prop}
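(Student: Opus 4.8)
The plan is to reformulate membership in $I^*(N^+)$ as a pseudocontinuation condition, reduce the statement to singly generated subspaces by exploiting cyclicity, and then recombine the resulting inner functions into a single one. First I would record the reformulation: for $f \in N^+$ one has $f \in I^*(N^+)$ exactly when $g := \overline{z}\, I\, \overline{f} \in N^+$, and since on $\mathbb{T}$ this gives $f = I\,\overline{zg}$, the quotient $f/I$ agrees on $\mathbb{T}$ with $\overline{zg}$. As $g \in N^+$ is of bounded type, the reflection $z \mapsto \overline{g(1/\overline{z})}$ is of bounded type on $\widehat{\mathbb{C}} \setminus \overline{\mathbb{D}}$, so $f \in I^*(N^+)$ precisely when $f/I$ admits a pseudocontinuation of bounded type to the exterior disc. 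Thus the proposition asserts that all elements of $\mathcal{E}$ pseudocontinue in a manner governed by one fixed inner function.

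Second, I would reduce to the singly generated case. For $f \in \mathcal{E}$ the closed $B$-invariant subspace $\overline{\mathrm{span}}^{N^+}\{B^n f : n \geq 0\}$ is contained in $\mathcal{E}$; since $\mathcal{E}$ is proper, $f$ cannot be cyclic for $B$ on $N^+$, for otherwise this span would be all of $N^+$. Hence every element of $\mathcal{E}$ is non-cyclic. I would then invoke the description of non-cyclic vectors for the backward shift on $N^+$ (the Smirnov-class analogue of the Douglas--Shapiro--Shields theorem, developed via the Aleksandrov machinery in Section 11.15 of \cite{havin2006linear} and in \cite{cima2000backward}): a function is non-cyclic if and only if it possesses a pseudocontinuation of bounded type, and in that case it lies in $I_f^*(N^+)$ for a suitable inner function $I_f$ built from the inner part of $f$ and the denominator of its pseudocontinuation. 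This yields, for each $f \in \mathcal{E}$, an inner $I_f$ with $f \in I_f^*(N^+)$.

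Third, I would assemble a single controlling inner function. Because $I \mapsto I^*(N^+)$ is monotone in the divisibility order (if $I \mid I'$ then, writing $I' = I\psi$ with $\psi$ inner, $\overline{z}I'\overline{f} = \psi\,\overline{z}I\overline{f} \in N^+$ whenever $\overline{z}I\overline{f} \in N^+$), it suffices to dominate the family $\{I_f\}$ by a common inner multiple $I$. One then has $f \in I_f^*(N^+) \subseteq I^*(N^+)$ for every $f$, and since $I^*(N^+)$ is closed in $N^+$ — a limit argument identical to the one in Proposition \ref{p2}, using that conjugation is a $\rho$-isometry, that $\log L$ is a topological algebra, and that $N^+$ is closed in $\log L$ — the inclusion passes to the closed subspace $\mathcal{E}$, giving $\mathcal{E} \subseteq I^*(N^+)$.

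I expect the decisive obstacle to be the existence of such a common inner multiple: the least common multiple of infinitely many inner functions is a priori only a formal product and may fail the Blaschke or finite-singular-mass condition needed to remain inner. The full strength of the properness of $\mathcal{E}$ must be used here to prevent the controlling inner function from escaping to infinity; this is exactly the point at which the one-component containment of \cite{havin2006linear} does genuine work. A secondary difficulty pervades the whole argument: $N^+$ equipped with $\rho$ is not locally convex, so the pseudocontinuations of Step 2 cannot be produced by Hahn--Banach separation or annihilator duality and must instead be extracted by explicit function theory, via dilations $f(rz) \to f$ and the structure of $\log L$.
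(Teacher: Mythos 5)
The paper offers no proof of this proposition to compare against: it is quoted directly from the literature (Corollary 1, p.~42 of \cite{havin2006linear}, due to Aleksandrov). Judged on its own terms, your proposal contains a genuine gap, and it is precisely the step you flag at the end as an ``obstacle.'' Steps 1 and 2 are fine, but they only establish a pointwise statement: each individual $f \in \mathcal{E}$ is non-cyclic and therefore lies in some $I_f^*(N^+)$ (and even this invokes the Smirnov-class Douglas--Shapiro--Shields theorem from the very source the paper cites). Step 3 then needs a common inner multiple of the family $\{I_f : f \in \mathcal{E}\}$, which you do not construct; worse, no construction can succeed from the information steps 1--2 provide.

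Here is a concrete demonstration. Let $k_a(z) = (1-\overline{a}z)^{-1}$ for $a \in \mathbb{D}$, and let $M = \mathrm{span}\{k_a : a \in \mathbb{D}\}$ (finite linear combinations). Since $Bk_a = \overline{a}\,k_a$, the manifold $M$ is $B$-invariant; it is a proper subspace of $N^+$; and every element of $M$ is non-cyclic, since a finite combination $\sum_j c_j k_{a_j}$ lies in $I^*(N^+)$ for the finite Blaschke product $I = \prod_j b_{a_j}$ (on $\mathbb{T}$ one computes $\overline{z}\,I\,\overline{k_{a_j}} = I/(z-a_j) \in H^\infty$). Yet $M$ is contained in no single $I^*(N^+)$: the same computation shows $k_a \in I^*(N^+)$ forces $I(a)=0$, and no nonzero inner function vanishes at every point of $\mathbb{D}$. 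So ``$B$-invariant, proper, and every element individually controlled by some inner function'' --- which is all your first two steps deliver --- is strictly weaker than the conclusion. The hypothesis that must do the work is the closedness (equivalently, non-density) of $\mathcal{E}$; the manifold $M$ above is dense, which is why it escapes. Your sketch never uses closedness in step 3, and your proposed repair --- ``the full strength of the properness of $\mathcal{E}$'' --- cannot suffice either, since $M$ is proper. Closing this gap is not a technical refinement of your outline; it is the entire content of Aleksandrov's theorem, which is presumably why the paper cites it rather than proving it.
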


If $\frac{g}{f^o}$ is not cyclic, from the above proposition there exists a $\theta$ such that $\frac{g}{f^o}$ lies in $\theta^* (N^+)$. It then follows $f^i, \frac{g}{f^o}$ lie in a one component $B$ invariant subspace ($(\theta f^i) ^*(N^+)$ is one such example). Then Theorem \ref{p2} allows us to talk about the smallest one component $B$ invariant subspace containing $f^i, \frac{g}{f^o}$.

\begin{thm}\label{3.1}
Let $f, g \in H^p$. If $\frac{g}{f^o}$ is not cyclic for $B$ then $\kappa_{min} (f,g) = \ker T_{\overline{f^o}{\overline{\theta}}/ f^o}$, where $\theta$ is such that $\theta^*(N^+)$ is the smallest one component $B$ invariant subspace containing both $\frac{g}{f^o}$ and $f^i$.
\end{thm}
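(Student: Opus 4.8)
The plan is to prove both containments directly. Write $G:=\overline{f^o}\,\overline{\theta}/f^o$; since $|G|=1$ a.e.\ on $\mathbb{T}$ this is a bounded (unimodular) symbol, so $T_G$ is a legitimate Toeplitz operator. I would first establish the exact description $\ker T_G=\{x\in H^p : x/f^o\in\theta^*(N^+)\}$, from which $f,g\in\ker T_G$ is immediate. For any $x\in H^p$, put $y:=x/f^o\in N^+$ (legitimate as $1/f^o\in N^+$); then $Gx=\overline{f^o}\,\overline{\theta}\,y$, so $\overline{Gx}=f^o\theta\overline{y}$. The modulus $|f^o\theta\overline y|=|x|$ is automatically in $L^p$, and because both $f^o$ and $1/f^o$ lie in $N^+$, the condition $Gx\in\overline{H^p_0}$ is equivalent to $\overline z\theta\overline y\in N^+$, i.e.\ to $y\in\theta^*(N^+)$. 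This gives the stated description, and since $f/f^o=f^i$ and $g/f^o$ both lie in $\theta^*(N^+)$ by the defining property of $\theta$, both $f$ and $g$ lie in $\ker T_G$.

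The substance is minimality: I must show that if $f,g\in\ker T_h$ for a bounded symbol $h$, then $\ker T_G\subseteq\ker T_h$. The first move is to use the near $\overline{f^i}$-invariance of Toeplitz kernels to deduce $f^o=\overline{f^i}f\in\ker T_h$, and then to write $hf^o=\overline{zQ}$ for some $Q\in H^p$ (the degenerate case $Q=0$ forces $h=0$, whence $\ker T_h=H^p$ and there is nothing to prove). The next move is to translate the two hypotheses $f,g\in\ker T_h$ into divisibility information about $Q$: from $hf=f^i(hf^o)=f^i\overline{zQ}\in\overline{H^p_0}$ I would read off $Q\overline{f^i}\in H^p$, and from $hg=(g/f^o)(hf^o)=(g/f^o)\overline{zQ}\in\overline{H^p_0}$ I would read off $Q\,\overline{(g/f^o)}\in H^p$.

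The decisive step, which I expect to be the main obstacle, is to repackage these two conditions as membership in a single \emph{one-component} $B$-invariant subspace and then to invoke the minimality of $\theta$. Dividing out the outer factor $Q^o$ (legitimate since $1/Q^o\in N^+$), the two conditions become $Q^i\overline{f^i}\in N^+$ and $Q^i\,\overline{(g/f^o)}\in N^+$. Using the identity $\overline z\,(zQ^i)\,\overline y=Q^i\overline y$, this says precisely that $f^i$ and $g/f^o$ both lie in the one-component subspace $(zQ^i)^*(N^+)$. Since $\theta^*(N^+)$ is by hypothesis the \emph{smallest} one-component $B$-invariant subspace containing $f^i$ and $g/f^o$, we obtain the crucial inclusion $\theta^*(N^+)\subseteq(zQ^i)^*(N^+)$. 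It is exactly here that the one-component nature of $\theta^*(N^+)$, and the earlier machinery ensuring such a smallest one-component subspace exists (Propositions \ref{p2} and \ref{bigger space}), are indispensable; a merely $B$-invariant hull of $\{f^i,g/f^o\}$ would not be comparable to $(zQ^i)^*(N^+)$ in this way.

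Finally I would close the argument. Take an arbitrary $x\in\ker T_G$ and write $x=f^oy$ with $y\in\theta^*(N^+)\subseteq(zQ^i)^*(N^+)$, so that $Q^i\overline y\in N^+$ and hence $Q\overline y=Q^o\,(Q^i\overline y)\in N^+$. The pointwise bound $|Q|=|h|\,|f^o|\le\|h\|_\infty|f^o|$ yields $|Q\overline y|\le\|h\|_\infty|f^oy|=\|h\|_\infty|x|\in L^p$, so $Q\overline y\in N^+\cap L^p=H^p$. Since $hx=(hf^o)y=\overline{zQ}\,y$ we have $\overline{hx}=zQ\overline y\in zH^p=H^p_0$, that is $hx\in\overline{H^p_0}$, so $x\in\ker T_h$. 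As $x\in\ker T_G$ was arbitrary, $\ker T_G\subseteq\ker T_h$, completing the proof that $\kappa_{min}(f,g)=\ker T_G$.
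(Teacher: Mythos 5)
Your proof is correct, and it reaches the result by a genuinely different route from the paper's. The paper handles minimality by reducing to its Theorem \ref{premain}: after checking $f,g\in\ker T_{\overline{f^o}\overline{\theta}/f^o}$ via $\frac{g}{f^o}=\theta\overline{zp_1}$ and $f^i=\theta\overline{zp_2}$, it only needs $GCD(p_1^i,p_2^i)=1$, which it proves by contradiction (a common inner divisor $\alpha$ would place $f^i$ and $g/f^o$ in $(\theta\overline{\alpha})^*(N^+)$, contradicting minimality of $\theta$); the actual comparison $\ker T_G\subseteq\ker T_h$ then happens inside Theorem \ref{premain}, whose proof uses the Riesz projection and the model-space identity $\bigcap_j K_{p_j^i}=K_{GCD(p_1^i,\hdots,p_k^i)}$. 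You instead argue directly: you characterise $\ker T_G$ as $\{x\in H^p: x/f^o\in\theta^*(N^+)\}$, use near $\overline{f^i}$-invariance to get $f^o\in\ker T_h$ and write $hf^o=\overline{zQ}$, encode the two hypotheses $f,g\in\ker T_h$ as the single statement $f^i,\,g/f^o\in(zQ^i)^*(N^+)$, and then use minimality \emph{positively} as the lattice inclusion $\theta^*(N^+)\subseteq(zQ^i)^*(N^+)$, closing with the pointwise bound $|Q|\leqslant\|h\|_\infty|f^o|$ and $N^+\cap L^p=H^p$. Both arguments hinge on the same fact (minimality of $\theta$ among one-component subspaces, guaranteed by Propositions \ref{p2} and \ref{bigger space}), but they deploy it differently, and each has its advantages: the paper's route recycles a general $k$-function criterion that also yields Corollaries \ref{max existence} and \ref{max function decomp}, so the theorem becomes a short verification; yours is self-contained, avoids the projection argument and model spaces entirely, replaces the contradiction with a transparent inclusion, and produces as a by-product the explicit description $\ker T_{\overline{f^o}\overline{\theta}/f^o}=f^o\,\theta^*(N^+)\cap H^p$, which the paper never states but which is exactly the structural statement underlying its Corollary \ref{max function decomp}.
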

\begin{proof}
We first show $f, g \in \ker T_{\overline{f^o}{\overline{\theta}}/ f^o}$. As $\frac{g}{f^o}, f^i  \in \theta^*(N^+)$, 
$$
\frac{g}{f^o} = \theta \overline{zp_1},
$$
and
$$
f^i = \theta \overline{zp_2},
$$
for some $p_1, p_2 \in N^+$. So 
$$
g(\frac{\overline{f^o}{\overline{\theta}}}{ f^o} ) = \overline{f^o z p_1},$$
and
$$
f (\frac{\overline{f^o}{\overline{\theta}}}{ f^o} ) = \overline{f^o z \ p_2},
$$ both of which are in $\overline{zN^+} \cap L^p = \overline{H^p_0}$ (both can be seen to lie in $L^P$ because the symbol for the operator is unimodular). Now by Theorem \ref{premain} all that remains to be proved is that $GCD(p_1^i, p_2^i) = 1$.

Because $f^i$ is inner this then forces $p_2$ to be inner. If $GCD(p_2, p_1^i) = \alpha \neq 1$ then as $p_2 | \theta$, this then forces $\alpha| \theta $ and then this would imply $\frac{g}{f^o}, f^i \in (\theta\overline{\alpha})^*(N^+) \subseteq \theta(N^+)$. Which can not be the case by minimality of our choice of $\theta$.
\end{proof}
Combining Theorem \ref{min} and Theorem \ref{3.1} we can now give a complete answer as to when $\kappa_{min}(f,g) = H^p$. This characterisation allows us to deduce an equivalent condition for a function to be cyclic for the backward shift on $N^+$.

\begin{cor}\label{1cor}
Let $f, g \in H^p$. There are no non-trivial Toeplitz kernels containing both $f$ and $g$ if and only if $\frac{g}{f^o}$ is a cyclic vector for the backward shift on $N^+$.
\end{cor}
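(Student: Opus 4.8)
The statement is a repackaging of the dichotomy supplied by Theorems \ref{min} and \ref{3.1}, so the plan is first to reduce the phrase ``there are no non-trivial Toeplitz kernels containing both $f$ and $g$'' to the single equation $\kappa_{min}(f,g) = H^p$, and then to read off each implication from the appropriate theorem. The only point requiring genuine work is verifying that the kernel produced in the non-cyclic case is a proper subspace of $H^p$.

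For the reduction I would argue from the defining minimality property: any Toeplitz kernel containing both $f$ and $g$ must contain $\kappa_{min}(f,g)$. Hence if $\kappa_{min}(f,g) = H^p$ then every Toeplitz kernel containing $f$ and $g$ is forced to equal $H^p$, so no non-trivial one contains both; conversely, if $\kappa_{min}(f,g) \neq H^p$ then $\kappa_{min}(f,g)$ is itself a non-trivial Toeplitz kernel containing $f$ and $g$. Thus the assertion in the corollary is equivalent to $\kappa_{min}(f,g) = H^p$, and the existence of $\kappa_{min}(f,g)$ in either case is precisely what Theorems \ref{min} and \ref{3.1} provide. With this in hand the reverse implication is immediate: if $\frac{g}{f^o}$ is cyclic for $B$ on $N^+$, Theorem \ref{min} gives $\kappa_{min}(f,g) = H^p$, so no non-trivial kernel contains $f$ and $g$.

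For the forward implication I would argue contrapositively: assume $\frac{g}{f^o}$ is not cyclic. Then Theorem \ref{3.1} yields an inner function $\theta$ with $\kappa_{min}(f,g) = \ker T_{\overline{f^o}\,\overline{\theta}/f^o}$, where $\theta$ is necessarily nonconstant since the subspace $\theta^*(N^+)$ contains the nonzero inner function $f^i$. It then remains to show $\ker T_{\overline{f^o}\,\overline{\theta}/f^o}$ is a proper subspace of $H^p$, so that it genuinely witnesses a non-trivial Toeplitz kernel containing $f$ and $g$.

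The properness is the one step needing verification, and I would obtain it from the unimodularity of the symbol $\gamma := \overline{f^o}\,\overline{\theta}/f^o$ on $\mathbb{T}$ (already noted in the proof of Theorem \ref{3.1}). If one had $\ker T_\gamma = H^p$, then $z^n \in \ker T_\gamma$ for every $n \geq 0$, i.e.\ $\gamma z^n \in \overline{H^p_0}$, equivalently $\overline{\gamma}\,z^{-n} \in H^p_0$. Writing $\overline{\gamma} = \sum_{k \geq 1} a_k z^k$ (which holds already from the case $n=0$) and running through $n = 0, 1, 2, \dots$ forces $a_k = 0$ for every $k$, so $\overline{\gamma} \equiv 0$, contradicting $|\gamma| = 1$ a.e. Hence $\ker T_\gamma \subsetneq H^p$, which completes the contrapositive and therefore establishes the corollary. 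I expect no serious obstacle beyond this properness check, as the substance of both directions is already carried by Theorems \ref{min} and \ref{3.1}.
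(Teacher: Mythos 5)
Your proposal is correct and takes essentially the same route as the paper, which obtains the corollary precisely by combining Theorem \ref{min} (cyclic case) with Theorem \ref{3.1} (non-cyclic case) and the defining minimality property of $\kappa_{min}(f,g)$. The only material you add is the explicit verification that $\ker T_{\overline{f^o}\,\overline{\theta}/f^o}$ is a proper subspace of $H^p$ --- a point the paper leaves implicit --- and your argument for it (a unimodular, hence nonzero, symbol cannot annihilate every $z^n$) is valid.
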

Due to symmetry of the above corollary and the fact that every outer function in $N^+$ can be expressed as the quotient of two outer functions in $H^p$ we can also deduce the following.
\begin{cor}
Let $f \in N^+$ and outer, then $f$ is cyclic for the backward shift if and only if $\frac{1}{f}$ is a cyclic vector for the backward shift.
\end{cor}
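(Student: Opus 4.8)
The plan is to reduce the statement to Corollary \ref{1cor} together with the evident symmetry of the condition ``there is no non-trivial Toeplitz kernel containing both $f_1$ and $f_2$''. The starting observation is that Corollary \ref{1cor} characterises cyclicity of a quotient $g/f^o$ in terms of a property of the \emph{pair} $(f,g)$ that does not distinguish numerator from denominator, so the whole argument will consist of realising both $f$ and $1/f$ as such quotients built from the same two outer functions.

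First I would write $f$ as a ratio of two outer $H^p$ functions. By characterisation 2 of Proposition \ref{quotientouter} we may write $f = f_1/f_2$ with $f_2$ outer and $f_1, f_2 \in H^\infty \subseteq H^p$; since $f$ is assumed outer and $f_2$ is outer, the product $f_1 = f f_2$ is a product of outer functions and is therefore itself outer. Thus $f = f_1/f_2$ with both $f_1$ and $f_2$ outer elements of $H^p$, so that $f_1^o = f_1$ and $f_2^o = f_2$ up to unimodular constants. This realisation, together with $1/f = f_2/f_1$, is the only real construction needed.

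Next I would apply Corollary \ref{1cor} twice, to the two orderings of the pair $\{f_1, f_2\}$. Taking $f_2$ as the ``denominator'' function, the quotient appearing in the corollary is $f_1/f_2^o = f_1/f_2 = f$, so $f$ is cyclic for the backward shift on $N^+$ if and only if there is no non-trivial Toeplitz kernel containing both $f_2$ and $f_1$. Taking instead $f_1$ as the denominator function, the relevant quotient is $f_2/f_1^o = f_2/f_1 = 1/f$, so $1/f$ is cyclic if and only if there is no non-trivial Toeplitz kernel containing both $f_1$ and $f_2$. Since a Toeplitz kernel is a linear subspace, ``contains both $f_1$ and $f_2$'' and ``contains both $f_2$ and $f_1$'' are the same statement, and hence the two right-hand conditions coincide. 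Chaining the equivalences gives that $f$ is cyclic if and only if $1/f$ is cyclic.

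The only point requiring care, and the step I expect to be the main (though modest) obstacle, is the reduction of an arbitrary outer $f \in N^+$ to a quotient of two \emph{outer} functions lying in $H^p$: one must verify both that $f_1$ can be taken outer (which follows from $f$ outer and $f_2$ outer) and that $f_1, f_2$ genuinely lie in $H^p$ rather than merely in $N^+$, so that Corollary \ref{1cor} is applicable. Once this is in place the symmetry argument is immediate and no further computation is needed.
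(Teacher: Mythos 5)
Your proof is correct and takes essentially the same approach as the paper: the paper deduces this corollary precisely from the symmetry of Corollary \ref{1cor} combined with the fact that every outer function in $N^+$ is a quotient of two outer functions in $H^p$, which is exactly the argument you give. Your use of Proposition \ref{quotientouter} (characterisation 2) together with the observation that $f_1 = f f_2$ is outer, and hence $f_1^o = f_1$, $f_2^o = f_2$, simply fills in the details the paper leaves implicit.
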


\section{Minimal kernel of multiple elements in $H^p(\mathbb{D}, \mathbb{C}^2)$}
Keeping with earlier notation we will use Greek symbols for elements of $H^p(\mathbb{D}, \mathbb{C}^2)$. When considering the minimal kernel of $\begin{pmatrix}
\phi_1 \\
\phi_2
\end{pmatrix},
\begin{pmatrix}
\psi_1 \\
\psi_2
\end{pmatrix} \in H^p(\mathbb{D}, \mathbb{C}^2 )$, we find that the minimal kernel depends on the determinant of $ M= \begin{pmatrix}
\phi_1  & \psi_1  \\
\phi_2  & \psi_2  \\
\end{pmatrix} $. We first consider the case when  $\det{M} = \phi_1 \psi_2 - \psi_1 \phi_2$ is not identically equal to zero.
\begin{thm}
Let $\begin{pmatrix}
\phi_1 \\
\phi_2
\end{pmatrix},
\begin{pmatrix}
\psi_1 \\
\psi_2
\end{pmatrix} \in H^p(\mathbb{D}, \mathbb{C}^2 )$. If $\phi_1 \psi_2 - \psi_1 \phi_2$ is not identically equal to zero then $\kappa_{min} (\begin{pmatrix}
\phi_1 \\
\phi_2
\end{pmatrix},
\begin{pmatrix}
\psi_1 \\
\psi_2
\end{pmatrix})  = \ker T_{(\overline{u_1}/\overline{u_2}) \overline{z} M^{-1}},$ where $u_1$ is a scalar outer function with $|u_1| = |\phi_1 \psi_2 - \psi_1 \phi_2| $, and $u_2$ is a scalar outer function with $|u_2| = |\phi_1| + |\phi_2| + |\psi_1| + |\psi_2| + 1$.
\end{thm}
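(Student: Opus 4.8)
The plan is to mirror the strategy of Theorem \ref{premain}, but carried out for $2$-vectors: first check that both given vectors lie in $\ker T_G$, and then show that every element of $\ker T_G$ is a very explicit $\overline{N^+}$-combination of the two vectors, which will force $\ker T_G \subseteq \ker T_H$ for any competing symbol $H$. Write $\Phi = (\phi_1,\phi_2)^T$ and $\Psi = (\psi_1,\psi_2)^T$, so that $M = [\,\Phi\ \Psi\,]$ and $G = (\overline{u_1}/\overline{u_2})\,\overline{z}\,M^{-1}$. Since $\det M \in N^+$ is not identically zero it has log-integrable boundary values, so the outer function $u_1$ with $|u_1| = |\det M|$ genuinely exists (this is exactly where the hypothesis $\det M \not\equiv 0$ enters), while $u_2$ exists by Lemma \ref{outerconstruct}. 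Before anything else I would record that $G$ is a bounded symbol: using $M^{-1} = (\det M)^{-1}\mathrm{adj}\,M$ and $|u_1| = |\det M|$, each entry of $G$ has boundary modulus $|\phi_i|/|u_2|$ or $|\psi_i|/|u_2|$, which is $\leq 1$ because $|u_2| \geq |\phi_i|,|\psi_i|$.

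For the membership step, note $M^{-1}\Phi = (1,0)^T$ and $M^{-1}\Psi = (0,1)^T$, so $G\Phi = \overline{(u_1 z/u_2)}\,(1,0)^T$ and $G\Psi = \overline{(u_1 z/u_2)}\,(0,1)^T$. It therefore suffices to show $u_1 z/u_2 \in H^p_0$. This function lies in $N^+$ (a product of $u_1 \in N^+$, the monomial $z$, and the outer function $1/u_2$) and vanishes at $0$; its boundary modulus is $|\det M|/|u_2|$, and the estimate $|\det M| \leq |\phi_1||\psi_2| + |\psi_1||\phi_2|$ together with $|u_2| \geq |\psi_2|,|\psi_1|$ gives $|\det M|/|u_2| \leq |\phi_1| + |\phi_2| \in L^p$. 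Hence $u_1 z/u_2 \in N^+ \cap L^p = H^p$ and vanishes at $0$, so $G\Phi, G\Psi \in \overline{H^p_0(\mathbb{D},\mathbb{C}^2)}$ and both vectors lie in $\ker T_G$.

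The heart of the argument is the representation of a general $f = (f_1,f_2)^T \in \ker T_G$. Writing $Gf = \overline{z\,(q_1,q_2)^T}$ with $q_1,q_2 \in H^p$ and multiplying through by $\overline{u_2}/\overline{u_1 z}$ yields $M^{-1}f = (\overline{u_2}/\overline{u_1})(\overline{q_1},\overline{q_2})^T$, hence
$$ f = \frac{\overline{u_2}}{\overline{u_1}}\,M\begin{pmatrix}\overline{q_1}\\ \overline{q_2}\end{pmatrix} = \frac{\overline{u_2}}{\overline{u_1}}\bigl(\overline{q_1}\,\Phi + \overline{q_2}\,\Psi\bigr). $$
Now suppose $\Phi,\Psi \in \ker T_H$ for some bounded symbol $H$, so $H\Phi = \overline{z\,r}$ and $H\Psi = \overline{z\,s}$ with $r,s \in H^p(\mathbb{D},\mathbb{C}^2)$. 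Then
$$ Hf = \frac{\overline{u_2}}{\overline{u_1}}\bigl(\overline{q_1}\,H\Phi + \overline{q_2}\,H\Psi\bigr) = \overline{\,\tfrac{u_2 z}{u_1}\bigl(q_1 r + q_2 s\bigr)\,}, $$
and since $u_2/u_1 \in N^+$ and $q_1 r + q_2 s \in H^{p/2}(\mathbb{D},\mathbb{C}^2) \subseteq N^+(\mathbb{D},\mathbb{C}^2)$, this lies in $\overline{z\,N^+(\mathbb{D},\mathbb{C}^2)}$. As $f \in H^p(\mathbb{D},\mathbb{C}^2)$ and $H$ is bounded, $Hf \in L^p$ as well, so $Hf \in \overline{z N^+} \cap L^p = \overline{H^p_0(\mathbb{D},\mathbb{C}^2)}$, exactly as in Proposition \ref{quotientouter} and Theorem \ref{kmin}. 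Therefore $f \in \ker T_H$, and since $f \in \ker T_G$ was arbitrary, $\ker T_G \subseteq \ker T_H$, which is what minimality requires.

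I expect the only genuinely delicate points to be bookkeeping rather than conceptual: both the boundedness of $G$ and the membership $u_1 z/u_2 \in H^p$ hinge on the specific choice $|u_2| = |\phi_1|+|\phi_2|+|\psi_1|+|\psi_2|+1$, and the closing step relies on $N^+$ being closed under multiplication together with $N^+ \cap L^p = H^p$. It is worth emphasising that, in contrast to the scalar Theorem \ref{premain}, no coprimality hypothesis is needed here: the factor $M^{-1}$ already decouples the two columns, so the kernel collapses directly onto the $\overline{N^+}$-span of $\Phi$ and $\Psi$, and the determinant condition is used solely to guarantee the existence of the outer function $u_1$.
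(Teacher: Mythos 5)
Your proof is correct and follows essentially the same route as the paper's: boundedness of the symbol via the adjugate factorisation together with $|u_1|=|\det M|$, the representation $f = (\overline{u_2}/\overline{u_1})\bigl(\overline{q_1}\,\Phi + \overline{q_2}\,\Psi\bigr)$ for a general element of $\ker T_G$, and the closing argument that $Hf \in \overline{zN^+}\cap L^p = \overline{H^p_0}$ whenever $\Phi,\Psi \in \ker T_H$. The only difference is that you explicitly verify the membership $\Phi,\Psi \in \ker T_G$ (via the estimate $|u_1|/|u_2| \leqslant |\phi_1|+|\phi_2| \in L^p$), a step the paper's proof leaves implicit, which is a worthwhile completion rather than a change of method.
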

\begin{proof}
We first note that the specified symbol is in fact bounded. We have $$(\overline{u_1}/\overline{u_2})\overline{z} M^{-1} = \overline{z} \frac{\overline{u_1}}{\phi_1 \psi_2 - \psi_1 \phi_2} \begin{pmatrix}
\psi_2/\overline{u_2} & -\psi_1/\overline{u_2} \\
-\phi_2/\overline{u_2} & \phi_1/\overline{u_2} \\
\end{pmatrix},
$$by construction $|\overline{z} \frac{\overline{u_1}}{\phi_1 \psi_2 - \psi_1 \phi_2}| = 1$ and each entry in $\begin{pmatrix}
\psi_2/\overline{u_2} & -\psi_1/\overline{u_2} \\
-\phi_2/\overline{u_2} & \phi_1/\overline{u_2} \\
\end{pmatrix}$ has modulus smaller than 1, hence $(\overline{u_1}/\overline{u_2})\overline{z} M^{-1}$ is a bounded matrix symbol.

For any 
$\begin{pmatrix}
f_1 \\
f_2
\end{pmatrix} \in \ker T_{(\overline{u_1}/\overline{u_2}) \overline{z} M^{-1}} $, we have 
$$
(\overline{u_1}/\overline{u_2}) \overline{z} M^{-1} 
\begin{pmatrix}
f_1 \\
f_2
\end{pmatrix} \in \overline{H^p_0(\mathbb{D}, \mathbb{C}^2 )}.
$$
Dividing through by  $\overline{u_1}$,  then multiplying through by $\overline{u_2}$ we see that
$$
\overline{z} M^{-1} 
\begin{pmatrix}
f_1 \\
f_2
\end{pmatrix} = \begin{pmatrix}
\overline{zp_1} \\
\overline{zp_2}
\end{pmatrix},
$$
for some $p_1, p_2 \in N^+$, so$$
\begin{pmatrix}
f_1 \\
f_2
\end{pmatrix} = M \begin{pmatrix}
 \overline{p_1} \\
 \overline{p_2}
\end{pmatrix} = \begin{pmatrix}
\phi_1 \\
\phi_2
\end{pmatrix} \overline{p_1} + \begin{pmatrix}
\psi_1 \\
\psi_2
\end{pmatrix} \overline{p_2}.
$$
Then for any other bounded matrix $H$ we have
$$
H \begin{pmatrix}
f_1 \\
f_2
\end{pmatrix} = H \begin{pmatrix}
\phi_1 \\
\phi_2
\end{pmatrix} \overline{p_1} + H\begin{pmatrix}
\psi_1 \\
\psi_2
\end{pmatrix} \overline{p_2}.
$$
So if $\begin{pmatrix}
\phi_1 \\
\phi_2
\end{pmatrix},
\begin{pmatrix}
\psi_1 \\
\psi_2
\end{pmatrix} \in \ker T_{H}$, then both coordinates of $H \begin{pmatrix}
f_1 \\
f_2
\end{pmatrix}$ lie in $L^p$ and both $H \begin{pmatrix}
\phi_1 \\
\phi_2
\end{pmatrix} \overline{p_1}$ and $H\begin{pmatrix}
\psi_1 \\
\psi_2
\end{pmatrix} \overline{p_2}$ have both their co-ordinates lying in $\overline{zN^+}$, so therefore $H \begin{pmatrix}
f_1 \\
f_2
\end{pmatrix} \in \overline{H^p_0(\mathbb{D}, \mathbb{C}^2 )} $. We conclude 
$$
 \ker T_{(\overline{u_1}/\overline{u_2}) \overline{z} M^{-1}} \subseteq \ker T_H.
$$
\end{proof}
We now consider the minimal kernel for when $\phi_1\psi_2 - \psi_1\phi_2 = 0$. In the following we let $P_1$ and $P_2$ denote the projections $L^p(\mathbb{D}, \mathbb{C}^2) \to H^p$ on to the first and second coordinate respectively. 

\begin{thm}
Let $\begin{pmatrix}
\phi_1 \\
\phi_2
\end{pmatrix},
\begin{pmatrix}
\psi_1 \\
\psi_2
\end{pmatrix} \in H^p(\mathbb{D}, \mathbb{C}^2 )$ and let $u$ be an outer function such that $|u| = |\phi_1| + |\phi_2| + 1$. If $\frac{\psi_2}{\phi_2^o}$ is not a cyclic vector for the backward shift on $N^+$ and $\phi_1\psi_2 - \psi_1\phi_2 = 0$, then we have 
$$
\kappa_{min}(\begin{pmatrix}
\phi_1 \\
\phi_2
\end{pmatrix},
\begin{pmatrix}
\psi_1 \\
\psi_2
\end{pmatrix}) = \ker T_{ \begin{pmatrix}
\phi_2/u & -\phi_1/u \\
0 & \overline{\phi_2^o}\overline{\theta}/\phi_2^o \\
\end{pmatrix} },
$$
where $\theta$ is such that $\theta^*(N^+)$ is the smallest one component $B$ invariant subspace containing both $\frac{\psi_2 }{\phi_2^o}$ and $\phi_2^i$.

\begin{rem}
We note how $\theta$ is the same inner function that appears in the symbol for the scalar minimal kernel of $\phi_2$ and $\psi_2$.
\end{rem}

\end{thm}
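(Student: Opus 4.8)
The plan is to prove the two defining inclusions for the minimal kernel: first that both generators $(\phi_1,\phi_2)^{T}$ and $(\psi_1,\psi_2)^{T}$ lie in $\ker T_G$, where $G$ denotes the displayed symbol, and then that $\ker T_G \subseteq \ker T_H$ for every bounded symbol $H$ annihilating both generators. I would first record that $G$ is a bounded symbol: the first-row entries $\phi_2/u, \phi_1/u$ lie in $N^+ \cap L^{\infty} = H^{\infty}$ because $|u| = |\phi_1|+|\phi_2|+1$ dominates $|\phi_1|$ and $|\phi_2|$, while $\overline{\phi_2^o}\,\overline{\theta}/\phi_2^o$ is unimodular on $\mathbb{T}$. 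The organising observation is that the second row of $G$ is precisely the scalar symbol $g := \overline{\phi_2^o}\,\overline{\theta}/\phi_2^o$ of Theorem \ref{3.1}, so that for $(f_1,f_2)^{T} \in \ker T_G$ the second coordinate satisfies $f_2 \in \ker T_g = \kappa_{min}(\phi_2,\psi_2)$, while the first row together with $H^p \cap \overline{H^p_0} = \{0\}$ forces the scalar relation $\phi_2 f_1 = \phi_1 f_2$.

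For the membership of the generators I would unfold the hypotheses $\phi_2^i,\ \psi_2/\phi_2^o \in \theta^*(N^+)$, which say $\overline{z}\theta\overline{\phi_2^i} \in N^+$ and $\overline{z}\theta\,\overline{\psi_2/\phi_2^o} \in N^+$, hence $\overline{\theta}\phi_2^i,\ \overline{\theta}(\psi_2/\phi_2^o) \in \overline{zN^+}$. Multiplying by $\overline{\phi_2^o} \in \overline{N^+}$ and reading off boundary moduli (namely $|\phi_2|$ and $|\psi_2|$) shows the bottom entries of $G(\phi_1,\phi_2)^{T}$ and $G(\psi_1,\psi_2)^{T}$ lie in $\overline{zN^+} \cap L^p = \overline{H^p_0}$, while their top entries are $(\phi_2\phi_1-\phi_1\phi_2)/u = 0$ and $(\phi_2\psi_1-\phi_1\psi_2)/u$; the latter vanishes precisely by the hypothesis $\phi_1\psi_2 - \psi_1\phi_2 = 0$. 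Thus both generators lie in $\ker T_G$.

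The core of the argument is minimality, which I would run as a two-coordinate version of the $GCD$ computation in Theorem \ref{premain}. Fix $(f_1,f_2)^{T} \in \ker T_G$. The relation $\phi_2 f_1 = \phi_1 f_2$ lets me write it as $s(\phi_1,\phi_2)^{T}$ with $s = f_2/\phi_2$, and the determinant hypothesis equally as $t(\psi_1,\psi_2)^{T}$ with $t = f_2/\psi_2$. Running $f_2 \in \ker T_g$ through the representation in the proofs of Theorems \ref{premain} and \ref{3.1} (writing $\phi_2^i = \theta\overline{zp_2}$ with $p_2$ inner and $\psi_2/\phi_2^o = \theta\overline{zp_1}$) yields a single $p \in H^p$ with $s = p_2\overline{p}/\overline{\phi_2^o}$ and $t = p_1^i\overline{p}/\overline{p_1^o\phi_2^o}$. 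Now apply $H$: since both generators lie in $\ker T_H$, the vectors $H(\phi_1,\phi_2)^{T}$ and $H(\psi_1,\psi_2)^{T}$ lie in $\overline{H^p_0(\mathbb{D},\mathbb{C}^2)}$. Multiplying the first by $s$ shows each coordinate of $Hf$ equals $p_2$ times an element of $\overline{zN^+}$, which being also in $L^p$ lies in $p_2\overline{H^p_0}$; multiplying the second by $t$ exhibits that same coordinate in $p_1^i\overline{H^p_0}$. Projecting, each coordinate of $P(Hf)$ lies in $K_{p_2} \cap K_{p_1^i} = K_{GCD(p_2,\,p_1^i)}$, and since Theorem \ref{3.1} supplies $GCD(p_2,p_1^i)=1$ this intersection is $\{0\}$. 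Hence $Hf \in \overline{H^p_0(\mathbb{D},\mathbb{C}^2)}$, i.e. $(f_1,f_2)^{T} \in \ker T_H$, giving $\ker T_G \subseteq \ker T_H$.

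The step I expect to be the main obstacle is the bookkeeping that converts ``each coordinate of $Hf$ is an inner function times an anti-analytic symbol'' into model-space membership: the identity $P(I\overline{v}) \in K_I$ for inner $I$ and $v \in H^p_0$ must be applied coordinatewise, and crucially the same data $(p_1,p_2)$ and the same $p \in H^p$ extracted from the scalar minimal kernel of $(\phi_2,\psi_2)$ must feed both the $\phi$- and $\psi$-representations, so that the two model spaces $K_{p_2}$ and $K_{p_1^i}$ are genuinely coprime. That coprimality is exactly the minimality of $\theta$ from Theorem \ref{3.1}, so the vector result inherits its force from the scalar one, while the determinant hypothesis is what collapses the whole kernel onto the single direction $(\phi_1,\phi_2)^{T}$ and thereby makes the reduction to that scalar datum legitimate.
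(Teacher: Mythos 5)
Your proposal is correct and follows essentially the same route as the paper's own proof: both rows of the symbol are exploited in the same way (the second row placing $f_2$ in the scalar kernel with representation $\phi_2^o\theta\overline{zp}/\overline{\phi_2^o}$, the first row forcing $\phi_2 f_1 = \phi_1 f_2$), the element is rewritten as $s(\phi_1,\phi_2)^T$ and $t(\psi_1,\psi_2)^T$ with exactly the paper's coefficients $p_2\overline{p}/\overline{\phi_2^o}$ and $p_1^i\overline{p}/\overline{\phi_2^o p_1^o}$, and minimality is concluded via $P(Hf)$ landing coordinatewise in $K_{p_2}\cap K_{p_1^i}=K_{GCD(p_2,p_1^i)}=\{0\}$, with the coprimality inherited from the scalar Theorem \ref{3.1}. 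The only difference is presentational: you verify the generators' membership and the boundedness of the symbol explicitly, which the paper leaves implicit.
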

\begin{proof}
Our choice of $\theta$ guarantees both the vectors are in the required kernel.

Let $ \begin{pmatrix}
x_1 \\
x_2
\end{pmatrix} \in \ker T_{ \begin{pmatrix}
\phi_2/u & -\phi_1/u \\
0 & \overline{\phi_2^o}\overline{\theta}/\phi_2^o \\
\end{pmatrix}}$, then we have 
$$
x_2 = \frac{\phi^o_2 \theta \overline{zp}}{\overline{\phi^o_2}},
$$
for some $p \in H^p$. As in the scalar case for our choice of $\theta$  we have $\frac{\psi_2}{\phi_2^o} = \theta \overline{zp_1}$ and $\phi^i = \theta \overline{zp_2}$, for some $p_1, p_2 \in N^+$, so $\theta$ can be written as $$
\theta= \frac{\psi_2 z p_1^i}{\phi^o_2 \overline{p_1^o}},
$$
and
$$
\theta = \phi^i_2 z p_2,$$ where $p_2$ is inner. Substituting our two expressions for $\theta$ in to the above expression for $x_2$ gives
\begin{equation}\label{1}
   x_2 = \frac{\psi_2 p_1^i \overline{p}}{\phi_2^o \overline{p_1^o}}, 
\end{equation}
and
\begin{equation}\label{2}
    x_2 = \frac{\phi_2 p_2 \overline{p}}{\overline{\phi^o_2}}.
\end{equation}
We also have that $\begin{pmatrix}
x_1 \\
x_2
\end{pmatrix} $ satisfies
$$
x_1 \phi_2 - \phi_1 x_2 = 0,
$$
so substituting $x_2 = \frac{\phi_2 p_2 \overline{p}}{\overline{\phi^o_2}}$ from \eqref{2}
yields
$$
x_1 \phi_2 - \phi_1 \frac{\phi_2 p_2 \overline{p}}{\overline{\phi^o_2}} = 0,
$$
and so
$$
x_1 = \phi_1 \frac{ p_2 \overline{p}}{\overline{\phi^o_2}}.
$$
Consequently we may write all $ \begin{pmatrix}
x_1 \\
x_2
\end{pmatrix} \in \ker T_{ \begin{pmatrix}
\phi_2/u & -\phi_1/u \\
0 & \overline{\phi_2^o}\overline{\theta}/\phi_2^o \\
\end{pmatrix}}$ are of the form
$$
 \begin{pmatrix}
x_1 \\
x_2
\end{pmatrix} = \begin{pmatrix}
\phi_1 \\
\phi_2
\end{pmatrix} \frac{p_2 \overline{p}}{\overline{\phi_2^o}}.
$$
We will now find a similar expression relating  $\begin{pmatrix}
x_1 \\
x_2
\end{pmatrix}$ and $ \begin{pmatrix}
\psi_1 \\
\psi_2
\end{pmatrix}$. Multiplying 
$$
x_1 \phi_2 - \phi_1 x_2 = 0,
$$ by $\frac{\psi_1}{\phi_1} = \frac{\psi_2}{\phi_2}$ gives
$$
x_1 \psi_2 - \psi_1 x_2 = 0,
$$
and substituting $x_2 = \frac{\psi_2  p_1^i \overline{p}}{\overline{\phi^o_2} \overline{p_1^o}}$ from \eqref{1} in to this expression yields
$$
x_1 \psi_2 - \psi_1 \frac{\psi_2  p_1^i \overline{p}}{\overline{\phi^o_2} \overline{p_1^o}} = 0,
$$
so
$$
x_1 = \psi_1 \frac{ p_1^i \overline{p}}{\overline{\phi^o_2} \overline{p_1^o}}.
$$
Consequently we can write
$$
\begin{pmatrix}
x_1 \\
x_2
\end{pmatrix} = \begin{pmatrix}
\psi_1 \\
\psi_2
\end{pmatrix} \frac{ p_1^i \overline{p}}{\overline{\phi^o_2} \overline{p_1^o}}.
$$
Now we have two expressions for $\begin{pmatrix}
x_1 \\
x_2
\end{pmatrix} \in \ker T_{ \begin{pmatrix}
\phi_2/u & -\phi_1/u \\
0 & \overline{\phi_2^o}\overline{\theta}/\phi_2^o \\
\end{pmatrix}}$,
$$
\begin{pmatrix}
x_1 \\
x_2
\end{pmatrix} = \begin{pmatrix}
\psi_1 \\
\psi_2
\end{pmatrix} \frac{ p_1^i \overline{p}}{\overline{\phi^o_2} \overline{p_1^o}},
$$
and
$$
 \begin{pmatrix}
x_1 \\
x_2
\end{pmatrix} = \begin{pmatrix}
\phi_1 \\
\phi_2
\end{pmatrix} \frac{p_2 \overline{p}}{\overline{\phi_2^o}}.
$$
So if $\begin{pmatrix}
\phi_1 \\
\phi_2
\end{pmatrix},
\begin{pmatrix}
\psi_1 \\
\psi_2
\end{pmatrix} \in \ker T_H$, for any symbol $H$, then
$$
H \begin{pmatrix}
x_1 \\
x_2
\end{pmatrix}  = H \begin{pmatrix}
\psi_1 \\
\psi_2
\end{pmatrix} \frac{ p_1^i \overline{p}}{\overline{\phi^o_2} \overline{p_1^o}} = \Big(  H \begin{pmatrix}
\psi_1 \\
\psi_2
\end{pmatrix} \Big) \Big( \frac{ p_1^i \overline{p}}{\overline{\phi^o_2} \overline{p_1^o}} \Big).
$$
By Proposition \ref{quotientouter} $\frac{  \overline{p}}{\overline{\phi_2^o o_1^o} } \in \overline{N^+}$ and $H\begin{pmatrix}
\psi_1 \\
\psi_2
\end{pmatrix} \in \overline{H^p_0(\mathbb{D}, \mathbb{C}^2)}$, so both coordinates of $\Big(  H \begin{pmatrix}
\psi_1 \\
\psi_2
\end{pmatrix} \Big) \Big( \frac{ \overline{p}}{\overline{\phi^o_2} \overline{p_1^o}} \Big) 
$ are in $\overline{zN^+} \cap L^p(\mathbb{T}) = \overline{H^p_0}$, and so $ H \begin{pmatrix}
x_1 \\
x_2
\end{pmatrix} =  \Big(  H \begin{pmatrix}
\psi_1 \\
\psi_2
\end{pmatrix} \Big) \Big( \frac{ p_1^i \overline{p}}{\overline{\phi^o_2} \overline{p_1^o}} \Big) \in  p_1^i \overline{H^2_0(\mathbb{D}, \mathbb{C}^2 )}$. Similarly 
$$
H\begin{pmatrix}
x_1 \\
x_2
\end{pmatrix}  = H\begin{pmatrix}
\phi_1 \\
\phi_2
\end{pmatrix} \frac{p_2 \overline{p}}{\overline{\phi_2^o}} =  \Big(  H \begin{pmatrix}
\phi_1 \\
\phi_2
\end{pmatrix} \Big) \Big(  \frac{p_2 \overline{p}}{\overline{\phi_2^o}} \Big) \in p_2 \overline{H^p_0(\mathbb{D}, \mathbb{C}^2 )}.
$$
So $P_1(H\begin{pmatrix}
x_1 \\
x_2
\end{pmatrix}) \in K_{p_2} \cap K_{p_1^i} = K_{GCD(p_2, p_1^i)} $, but as in the scalar case we have chosen $\theta$ such that $GCD(p_2, p_1^i) = 1$, so $P_1(H\begin{pmatrix}
x_1 \\
x_2
\end{pmatrix}) \in K_1 = \{ 0 \} $. The same holds for $P_2(H\begin{pmatrix}
x_1 \\
x_2
\end{pmatrix}) $ and so $P(H\begin{pmatrix}
x_1 \\
x_2
\end{pmatrix}) = 0 $, and therefore
$$
\ker T_{ \begin{pmatrix}
\phi_2/u & -\phi_1/u \\
0 & \overline{\phi_2^o}\overline{\theta}/\phi_2^o \\
\end{pmatrix}} \subseteq \ker T_H.
$$
\end{proof}
\vskip 0.5cm
We now consider the case when $\frac{\psi_2}{\phi_2^o}$ is cyclic for $B$. In doing so we need to introduce some new theory. Let $N^+(\mathbb{D},\mathbb{C}^2) := \{ \begin{pmatrix}
f_1 \\
f_2
\end{pmatrix} : f_1, f_2 \in N^+ \}$ with the metric on $N^+(\mathbb{D},\mathbb{C}^2)$ defined by 
$$
\rho^2\left(  \begin{pmatrix}
f_1 \\
f_2
\end{pmatrix}, \begin{pmatrix}
g_1 \\
g_2
\end{pmatrix} \right) = \rho(f_1,g_1) + \rho(f_2,g_2),
$$
where $\rho$ is the metric on $N^+$. It is easily checked that $N^+(\mathbb{D},\mathbb{C}^2)$ is also a metric space and a sequence in $N^+(\mathbb{D},\mathbb{C}^2)$ converges if and only if both of its coordinates converge in $N^+$. As outer functions are invertible in $N^+$ for a fixed $f \in N^+$, $f N^+ = f^i N^+$ is closed. For a fixed $\begin{pmatrix}
f_1 \\
f_2
\end{pmatrix} \in N^+(\mathbb{D}, \mathbb{C}^2)$, the following computation shows $\begin{pmatrix}
f_1 \\
f_2
 \end{pmatrix} N^+$ is closed in $N^+(\mathbb{D},\mathbb{C}^2)$. If $\begin{pmatrix}
f_1 \\
f_2
 \end{pmatrix}x_n \to \begin{pmatrix}
x_1 \\
x_2
 \end{pmatrix}$ then $f_1 x_n \to x_1$ so $x_1 = f_1 x_0$, for some $x_0 \in N^+$, then as $\log L$ is a topological algebra we can deduce $x_n \to x_0$. So then $f_2 x_n \to f_2 x_0$ and $\begin{pmatrix}
f_1 \\
f_2
 \end{pmatrix}x_n \to \begin{pmatrix}
f_1 \\
f_2
 \end{pmatrix} x_0 \in \begin{pmatrix}
f_1 \\
f_2
 \end{pmatrix} N^+$.

We can also let $\rho^2$ define a metric on $\log L (\mathbb{D}, \mathbb{C}^2) = \{ \begin{pmatrix}
f_1 \\
f_2
\end{pmatrix} : f_1, f_2 \in \log L \}$ and in this metric $N^+(\mathbb{D},\mathbb{C}^2)$ is a closed subspace of $\log L (\mathbb{D}, \mathbb{C}^2)$.

\begin{thm}
Let $\begin{pmatrix}
\phi_1 \\
\phi_2
\end{pmatrix},
\begin{pmatrix}
\psi_1 \\
\psi_2
\end{pmatrix} \in H^p(\mathbb{D}, \mathbb{C}^2 )$, let $\beta = GCD( \phi_1^i, \phi_2^i)$ and let $u$ be an outer function such that $|u| = |\phi_1| + |\phi_2| + 1$. If  $\frac{\psi_2}{\overline{\beta}\phi_2}$ is a cyclic vector for the backward shift on $N^+$ and $\phi_1\psi_2 - \psi_1\phi_2 = 0$, then we have

$$
\kappa_{min} \left( \begin{pmatrix}
\phi_1 \\
\phi_2
\end{pmatrix},
\begin{pmatrix}
\psi_1 \\
\psi_2
\end{pmatrix} \right) = \ker T_{ \begin{pmatrix}
\phi_2/u & -\phi_1/u \\
0 & 0 \\
\end{pmatrix} }.
$$
\end{thm}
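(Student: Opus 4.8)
The plan is to first obtain an explicit description of $\ker T_G$, then verify both generators lie in it, and finally exploit near invariance together with the cyclicity hypothesis to establish minimality, closely following the scheme of Theorem \ref{min}.

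First I would unwind the kernel. Since the second row of the symbol is zero, $\begin{pmatrix} f_1 \\ f_2 \end{pmatrix} \in \ker T_G$ exactly when $(\phi_2 f_1 - \phi_1 f_2)/u \in \overline{H^p_0}$. The entries $\phi_j/u$ are bounded by construction, so this quotient lies in $L^p$; it also lies in $N^+$ (a product of an $H^{p/2}$ function and the outer function $1/u$), hence in $N^+\cap L^p = H^p$. As $H^p\cap\overline{H^p_0}=\{0\}$, the quotient must vanish, giving $\ker T_G = \{ f\in H^p(\mathbb{D},\mathbb{C}^2) : \phi_2 f_1 = \phi_1 f_2 \}$. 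Writing $g_j := \phi_j/\beta$, which lies in $H^p$ because $\beta\mid\phi_j^i$, and noting $GCD(g_1^i,g_2^i)=1$, I can cancel the inner factor $\beta$ to rewrite the condition as $g_2 f_1 = g_1 f_2$; coprimality then forces $g_1^i\mid f_1^i$, so every $f\in\ker T_G$ factors as $f = \begin{pmatrix} g_1 \\ g_2 \end{pmatrix} w$ for some $w\in N^+$. Thus $\ker T_G \subseteq \begin{pmatrix} g_1 \\ g_2 \end{pmatrix} N^+ \cap H^p(\mathbb{D},\mathbb{C}^2)$, in fact with equality.

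Both generators are visibly in $\ker T_G$: the first gives $\phi_2\phi_1-\phi_1\phi_2=0$, and the second gives $\phi_2\psi_1-\phi_1\psi_2=0$ by the determinant hypothesis. Moreover $\begin{pmatrix}\phi_1\\\phi_2\end{pmatrix} = \beta\begin{pmatrix}g_1\\g_2\end{pmatrix}$ and $\begin{pmatrix}\psi_1\\\psi_2\end{pmatrix} = \begin{pmatrix}g_1\\g_2\end{pmatrix}c$ with $c := \psi_2/(\overline\beta\phi_2) = \psi_2/g_2 \in N^+$, which is precisely the function assumed cyclic. Now suppose $\ker T_H$ contains both generators. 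Applying near invariance with the inner function $\beta$ to $\beta\begin{pmatrix}g_1\\g_2\end{pmatrix}\in\ker T_H$ (legitimate since $\begin{pmatrix}g_1\\g_2\end{pmatrix}\in H^p$) gives $\begin{pmatrix}g_1\\g_2\end{pmatrix}\in\ker T_H$. Then, exactly as in Theorem \ref{min}, I would subtract $c(0)\begin{pmatrix}g_1\\g_2\end{pmatrix}$, observe the result vanishes at $0$, and apply near invariance with $\eta=z$ to obtain $\begin{pmatrix}g_1\\g_2\end{pmatrix}B(c)\in\ker T_H$; inducting yields $\operatorname{span}\{ \begin{pmatrix}g_1\\g_2\end{pmatrix}B^n(c) : n\geq 0\}\subseteq\ker T_H$.

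The final step is the closure argument in $N^+(\mathbb{D},\mathbb{C}^2)$. Because $c$ is cyclic, $\operatorname{span}\{B^n(c)\}$ is dense in $N^+$; since multiplication by $\begin{pmatrix}g_1\\g_2\end{pmatrix}$ is continuous and $\begin{pmatrix}g_1\\g_2\end{pmatrix}N^+$ is closed in $N^+(\mathbb{D},\mathbb{C}^2)$ (both established just before this theorem), the $N^+(\mathbb{D},\mathbb{C}^2)$-closure of the left-hand span equals $\begin{pmatrix}g_1\\g_2\end{pmatrix}N^+$, whose intersection with $H^p(\mathbb{D},\mathbb{C}^2)$ is exactly $\ker T_G$ by the first step. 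On the other side, the coordinatewise version of the $\log L$ argument from Theorem \ref{min} shows $\ker T_H$ is closed in the $H^p$-subspace topology of $N^+(\mathbb{D},\mathbb{C}^2)$: if $x_k\in\ker T_H$ converge to $x\in H^p(\mathbb{D},\mathbb{C}^2)$ in $N^+(\mathbb{D},\mathbb{C}^2)$, then $\overline{zHx_k}\to\overline{zHx}$ in $\log L(\mathbb{D},\mathbb{C}^2)$, the limit lies in the closed subspace $N^+(\mathbb{D},\mathbb{C}^2)$, and membership in $L^p$ forces $x\in\ker T_H$. Intersecting both closures with $H^p(\mathbb{D},\mathbb{C}^2)$ gives $\ker T_G \subseteq \ker T_H$, the required minimality. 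I expect the main obstacle to be this last paragraph: making the two closures interact correctly requires care that multiplication by $\begin{pmatrix}g_1\\g_2\end{pmatrix}$ and by the bounded symbol $H$ are continuous for the $\log L$ metric and that the relevant subspaces are closed, and it is here that the topological-algebra structure of $N^+$ and its vector analogue does the essential work.
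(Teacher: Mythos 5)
Your proposal is correct and takes essentially the same route as the paper's own proof: near invariance of $\ker T_H$ plus cyclicity of $\psi_2/(\overline{\beta}\phi_2)$ produces the span $\{\overline{\beta}\begin{pmatrix}\phi_1 \\ \phi_2\end{pmatrix}B^n(c)\}$, and the $\log L$ topological-algebra closure argument in the $H^p$-subspace topology of $N^+(\mathbb{D},\mathbb{C}^2)$ gives $\ker T_G\subseteq\ker T_H$. The only difference is organizational: you establish the identification $\ker T_G=\begin{pmatrix}g_1 \\ g_2\end{pmatrix}N^+\cap H^p(\mathbb{D},\mathbb{C}^2)$ (with $g_j=\phi_j/\beta$) at the outset, whereas the paper proves exactly this equality as its second stage, after the closure argument.
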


The assumption $\phi_1\psi_2 - \psi_1\phi_2 = 0$ ensures $\frac{\psi_2}{\overline{\beta}\phi_2} \in N^+$, as $\overline{\beta}\phi_1\psi_2 = \psi_1\overline{\beta}\phi_2$ and $GCD(\overline{\beta} \phi_1^i, \overline{\beta}\phi_2^i) = 1$ so every inner factor of $\overline{\beta}\phi_2$ divides $\psi_2$.

In the following proof we will write $span^{N^+}$ to mean the closed linear span in $N^+(\mathbb{D}, \mathbb{C}^2)$, and $span$ to mean the linear span.
\begin{proof}
We split the proof up in to two stages. We first prove if for any bounded symbol $H$ we have $\begin{pmatrix}
\phi_1 \\
\phi_2
\end{pmatrix},
\begin{pmatrix}
\psi_1 \\
\psi_2
\end{pmatrix} \in \ker T_H $, then $ \overline{\beta}\begin{pmatrix}
\phi_1 \\
\phi_2
\end{pmatrix}N^{+} \cap H^p(\mathbb{D}, \mathbb{C}^2 ) \subseteq \ker T_H$. Then we prove $\ker T_{ \begin{pmatrix}
\phi_2/u & -\phi_1/u \\
0 & 0 \\
\end{pmatrix} } = \overline{\beta}\begin{pmatrix}
\phi_1 \\
\phi_2
\end{pmatrix}N^{+} \cap H^p(\mathbb{D}, \mathbb{C}^2 ) $.

If $\begin{pmatrix}
\phi_1 \\
\phi_2
\end{pmatrix},
\begin{pmatrix}
\psi_1 \\
\psi_2
\end{pmatrix} \in \ker T_H $ then near invariance of Toeplitz kernels guarantees $\overline{\beta}\begin{pmatrix}
\phi_1 \\
\phi_2
\end{pmatrix} \in \ker T_H$, and so for $\lambda \in \mathbb{C}$
$$
\begin{pmatrix}
\psi_1 \\
\psi_2
\end{pmatrix} - \lambda \overline{\beta}\begin{pmatrix}
\phi_1 \\
\phi_2
\end{pmatrix} = \overline{\beta}\begin{pmatrix}
\phi_1 (\frac{\psi_1}{ \overline{\beta} \phi_1} - \lambda)  \\
\phi_2 (\frac{\psi_2}{\overline{\beta} \phi_2} - \lambda)
\end{pmatrix}
 \in \ker T_H.
$$
Noting $\frac{\psi_1}{ \overline{\beta} \phi_1} = \frac{\psi_2}{\overline{\beta} \phi_2}$, and letting $\lambda = \frac{\psi_1}{ \overline{\beta} \phi_1}(0) = \frac{\psi_2}{\overline{\beta} \phi_2}(0)  $ we see that,
$$
\overline{\beta}\begin{pmatrix}
\phi_1 (\frac{\psi_2}{ \overline{\beta} \phi_2} -\frac{\psi_2}{\overline{\beta} \phi_2}(0))  \\
\phi_2 (\frac{\psi_2}{\overline{\beta} \phi_2} - \frac{\psi_2}{\overline{\beta} \phi_2}(0))
\end{pmatrix} \in \ker T_H,
$$
and near invariance of Toeplitz kernels gives
$$
\frac{\overline{\beta}\begin{pmatrix}
\phi_1 (\frac{\psi_2}{ \overline{\beta} \phi_2} -\frac{\psi_2}{\overline{\beta} \phi_2}(0))  \\
\phi_2 (\frac{\psi_2}{\overline{\beta} \phi_2} - \frac{\psi_2}{\overline{\beta} \phi_2}(0))
\end{pmatrix}}{z} = \overline{\beta}\begin{pmatrix}
\phi_1 \\
\phi_2
\end{pmatrix} B(\frac{\psi_2}{\overline{\beta} \phi_2})  \in \ker T_H.
$$
We can repeat this process inductively to get $\overline{\beta}\begin{pmatrix}
\phi_1 \\
\phi_2
\end{pmatrix} B^n(\frac{\psi_2}{\overline{\beta} \phi_2})  \in \ker T_H$, and hence
\begin{equation}\label{span}
    span\{ \overline{\beta}\begin{pmatrix}
\phi_1 \\
\phi_2
\end{pmatrix} B^n(\frac{\psi_2}{\overline{\beta} \phi_2}) : n \in \mathbb{N}_0 \} \subseteq \ker T_H.
\end{equation}

We will now take the closure of both sides of this set inclusion in the $H^p(\mathbb{D}, \mathbb{C}^2 )$ subspace topology of $N^+(\mathbb{D},\mathbb{C}^2)$.

The closure of the left hand side of \eqref{span} is equal to $span^{N^+}\{ \overline{\beta}\begin{pmatrix}
\phi_1 \\
\phi_2
\end{pmatrix} B^n(\frac{\psi_2}{\overline{\beta} \phi_2}) \}$  intersected with $H^p(\mathbb{D}, \mathbb{C}^2 )$. As $\overline{\beta}\begin{pmatrix}
\phi_1 \\
\phi_2
\end{pmatrix}N^{+}$ is closed, $\frac{\psi_2}{\overline{\beta} \phi_2}$ is cyclic and $N^+$ is a topological algebra the closure of the left hand side of \eqref{span} equals $\overline{\beta}\begin{pmatrix}
\phi_1 \\
\phi_2
\end{pmatrix}N^{+} \cap H^p(\mathbb{D}, \mathbb{C}^2 )$.

The closure of the right hand side of \eqref{span} is the closure of $\ker T_H$ in $N^+(\mathbb{D},\mathbb{C}^2)$ intersected with $H^p(\mathbb{D}, \mathbb{C}^2 )$. We now argue this is equal to $\ker T_H$. Let $
\begin{pmatrix}
x_{1 n} \\
x_{2  n}
\end{pmatrix} \in \ker T_H$ be such that $\begin{pmatrix} x_{1 n} \\ x_{2  n} \end{pmatrix} \rightarrow \begin{pmatrix} x_1 \\ x_2 \end{pmatrix}$ in $N^+(\mathbb{D},\mathbb{C}^2)$, then $\begin{pmatrix} x_{1 n} \\ x_{2  n} \end{pmatrix} \rightarrow \begin{pmatrix} x_1 \\ x_2 \end{pmatrix}$ in $\log L(\mathbb{D},\mathbb{C}^2)$. As $\log L$ is a topological algebra and $H\begin{pmatrix} x_{1 n} \\ x_{2  n} \end{pmatrix} = \begin{pmatrix}
h_{11} x_{1 n} + h_{12} x_{2  n} \\
h_{21} x_{2  n} + h_{22} x_{2  n}
\end{pmatrix}$, we must have $\overline{zH \begin{pmatrix}
x_{1 n} \\
x_{2  n}
\end{pmatrix}} \rightarrow \overline{zH\begin{pmatrix}
x_1 \\
x_2
\end{pmatrix}} $ in $\log L(\mathbb{D},\mathbb{C}^2)$. As $\begin{pmatrix}
x_{1 n} \\
x_{2  n}
\end{pmatrix} \in \ker T_H$ we have  $ \overline{zH \begin{pmatrix} x_{1 n} \\ x_{2  n} \end{pmatrix}}  \in N^+(\mathbb{D},\mathbb{C}^2) $, and as $N^+(\mathbb{D},\mathbb{C}^2) $ is closed in $\log L(\mathbb{D},\mathbb{C}^2)$ we must have $ \overline{zH \begin{pmatrix} x_1 \\ x_2 \end{pmatrix}}   \in N^+(\mathbb{D},\mathbb{C}^2) $. So if $\begin{pmatrix} x_1 \\ x_2 \end{pmatrix} \in H^p(\mathbb{D}, \mathbb{C}^2 ) $ then  $ \overline{zH \begin{pmatrix} x_1 \\ x_2 \end{pmatrix}}   \in N^+(\mathbb{D},\mathbb{C}^2) \cap L^p(\mathbb{D}, \mathbb{C}^2 ) = H^p(\mathbb{D}, \mathbb{C}^2 ) $, so $\begin{pmatrix} x_1 \\ x_2 \end{pmatrix} \in \ker T_H$. From this we deduce
$$
\overline{\beta}\begin{pmatrix}
\phi_1 \\
\phi_2
\end{pmatrix}N^{+} \cap H^p(\mathbb{D}, \mathbb{C}^2 ) \subseteq \ker T_H.
$$

It remains to prove that $\ker T_{ \begin{pmatrix}
\phi_2/u & -\phi_1/u \\
0 & 0 \\
\end{pmatrix} } = \overline{\beta}\begin{pmatrix}
\phi_1 \\
\phi_2
\end{pmatrix}N^{+} \cap H^p(\mathbb{D}, \mathbb{C}^2 ) $. The $\supseteq$ inclusion is clear. We will now show the $\subseteq$ inclusion. If we let $\begin{pmatrix}
F_1 \\
F_2
\end{pmatrix}$ lie in $ \ker T_{ \begin{pmatrix}
\phi_2/u & -\phi_1/u \\
0 & 0 \\
\end{pmatrix} }$, then $F_1\phi_2 = F_2 \phi_1$, and so $\overline{\beta}F_1\phi_2 = \overline{\beta} F_2 \phi_1$ and  $F_1$ can be written as $F_1 = \overline{\beta}\phi_1 \frac{F_2}{\overline{\beta}\phi_2}$. Furthermore $ \frac{F_2}{\overline{\beta}\phi_2}$ is in the Smirnov class, because $\overline{\beta}F_1\phi_2 = \overline{\beta}F_2\phi_1$ and $GCD(\overline{\beta} \phi_1, \overline{\beta}\phi_2) = 1$ so every inner factor of $\overline{\beta}\phi_2$ divides $F_2$. We can also write $F_2 = \overline{\beta}\phi_2 \frac{F_1}{\overline{\beta}\phi_1}$, and as $\overline{\beta}F_1\phi_2 = \overline{\beta} F_2 \phi_1$, we have $\frac{F_1}{\overline{\beta}\phi_1} = \frac{F_2}{\overline{\beta}\phi_2}$, so $\begin{pmatrix}
F_1 \\
F_2
\end{pmatrix} \in \overline{\beta}\begin{pmatrix}
\phi_1 \\
\phi_2
\end{pmatrix}N^{+} \cap H^p(\mathbb{D}, \mathbb{C}^2 ). $ 

Thus we have proved that if $\begin{pmatrix}
\phi_1 \\
\phi_2
\end{pmatrix},
\begin{pmatrix}
\psi_1 \\
\psi_2
\end{pmatrix} \in \ker T_H $ then $\ker T_{ \begin{pmatrix}
\phi_2/u & -\phi_1/u \\
0 & 0 \\
\end{pmatrix} } \subseteq \ker T_H .$
\end{proof}

\begin{prop}\label{inner cyclic}
Let $\theta$ be inner. Then $f \in N^+$ is cyclic for $B$ iff $\theta f$ is cyclic for $B$.
\end{prop}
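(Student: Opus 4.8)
The plan is to translate cyclicity into a concrete membership statement in the one-component $B$-invariant subspaces $\theta^{*}(N^{+})$ and then exploit the identity $\theta\overline{\theta}=1$ on $\mathbb{T}$. We may assume $f\neq 0$ and $\theta$ non-constant, the remaining cases being trivial. First I would record the following characterisation of non-cyclic vectors: a nonzero $f\in N^{+}$ fails to be cyclic for $B$ if and only if there is a non-constant inner function $I$ with $f\in I^{*}(N^{+})$, that is, $\overline{z}\,I\,\overline{f}\in N^{+}$. The forward implication holds because the smallest closed $B$-invariant subspace $\mathcal{E}$ containing a non-cyclic $f$ is proper and non-trivial, so Proposition \ref{bigger space} supplies an inner $I$ with $\mathcal{E}\subseteq I^{*}(N^{+})$; this $I$ is non-constant since $\mathcal{E}\neq\{0\}$ while a constant inner function would force $I^{*}(N^{+})=\{0\}$. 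The reverse implication holds because for non-constant inner $I$ the subspace $I^{*}(N^{+})$ is a proper closed $B$-invariant subspace: it contains $\mathcal{E}$ and cannot equal $N^{+}$, since intersecting with $H^{2}$ gives $I^{*}(N^{+})\cap H^{2}=K_{I}\subsetneq H^{2}$ (using $N^{+}\cap L^{2}=H^{2}$).

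With this equivalence in place, the proposition reduces to showing that $f\in I^{*}(N^{+})$ for some non-constant inner $I$ if and only if $\theta f\in J^{*}(N^{+})$ for some non-constant inner $J$. For the direction \emph{$f$ non-cyclic $\implies\theta f$ non-cyclic} I would take $J=\theta I$: on $\mathbb{T}$ we have $\overline{z}\,(\theta I)\,\overline{\theta f}=\overline{z}\,I\,\overline{f}\,(\theta\overline{\theta})=\overline{z}\,I\,\overline{f}\in N^{+}$, and $\theta I$ is again inner and non-constant (it is constant only if $I$ is), so $\theta f\in J^{*}(N^{+})$. The converse is the crux. Given $\theta f\in J^{*}(N^{+})$, i.e. $\overline{z}\,J\,\overline{\theta}\,\overline{f}\in N^{+}$, I would multiply through by $\theta$: since $N^{+}$ is an algebra and $\theta\in H^{\infty}\subseteq N^{+}$, the product stays in $N^{+}$, and on $\mathbb{T}$ the inner factors cancel, $\theta\cdot\overline{z}\,J\,\overline{\theta}\,\overline{f}=\overline{z}\,J\,\overline{f}$. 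Hence $\overline{z}\,J\,\overline{f}\in N^{+}$, so $f\in J^{*}(N^{+})$ with the same non-constant $J$, and $f$ is non-cyclic.

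Chaining the two implications gives \emph{$f$ non-cyclic $\iff\theta f$ non-cyclic}, which is equivalent to the asserted biconditional. I expect the main obstacle to be not the algebra—once the cancellation $\theta\overline{\theta}=1$ is set up, each implication is a single line—but rather pinning down the characterisation of non-cyclicity cleanly: one must invoke Proposition \ref{bigger space} to pass from an abstract proper $B$-invariant subspace to a concrete one-component subspace $I^{*}(N^{+})$, and then verify that the inner function obtained is genuinely non-constant so that $I^{*}(N^{+})$ is proper. Supporting these steps are the facts that $\theta I$ is inner and non-constant and that multiplication by $\theta$ preserves $N^{+}$, both resting on $N^{+}$ being a (topological) algebra as established earlier in the paper.
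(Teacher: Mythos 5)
Your strategy is the same as the paper's: use Proposition \ref{bigger space} to place a non-cyclic function inside a one-component subspace, pass from $f\in I^{*}(N^{+})$ to $\theta f\in(\theta I)^{*}(N^{+})$ for one implication, and use the cancellation $\theta\overline{\theta}=1$ on $\mathbb{T}$ for the other. However, one of your supporting claims is false: it is \emph{not} true that a constant inner function $I$ forces $I^{*}(N^{+})=\{0\}$. Take $f(z)=1/(1-z)$. This lies in $N^{+}$ (numerator in $H^{\infty}$, denominator outer), and on $\mathbb{T}$ one has $\overline{f}=1/(1-\overline{z})=-z/(1-z)$, hence $\overline{z}\,\overline{f}=-1/(1-z)=-f\in N^{+}$. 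So $0\neq f\in 1^{*}(N^{+})$; more generally $1^{*}(N^{+})$ contains every rational function vanishing at infinity with all poles on $\mathbb{T}$, consistent with the spaces $\mathcal{E}(\theta,F,k)$ in Aleksandrov's conjecture being non-trivial even when the inner parameter $\theta$ is constant. This is a genuine error, and it is load-bearing in your write-up: the non-constancy of the inner function in your characterisation of non-cyclicity is secured \emph{only} by this false claim, so in your converse direction, if Proposition \ref{bigger space} were to return a constant $J$ for $\theta f$, your argument as written has no way to conclude that $f$ is non-cyclic.

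Fortunately the repair is immediate, because the claim is not actually needed. Your properness argument works for every inner $I$, constant or not: $I^{*}(N^{+})\cap H^{2}=K_{I}$ (if $f\in H^{2}$ and $\overline{z}I\overline{f}\in N^{+}$, then $|\overline{z}I\overline{f}|=|f|\in L^{2}$ gives $\overline{z}I\overline{f}\in N^{+}\cap L^{2}=H^{2}$), and $K_{I}\subsetneq H^{2}$ always, since $K_{I}=\{0\}$ when $I$ is constant and $K_{I}=H^{2}\ominus IH^{2}$ otherwise. Hence $I^{*}(N^{+})$ is a proper closed $B$-invariant subspace for \emph{every} inner $I$, and you should simply state the characterisation as ``$f$ is non-cyclic iff $f\in I^{*}(N^{+})$ for some inner $I$,'' with no constancy restriction. (Alternatively, note $J^{*}(N^{+})\subseteq(zJ)^{*}(N^{+})$, since $\overline{z}(zJ)\overline{g}=z\bigl(\overline{z}J\overline{g}\bigr)\in N^{+}$, so one may always replace $J$ by the non-constant $zJ$.) With either correction your proof is sound and coincides with the paper's argument; the paper itself is terser and does not address the constancy issue at all.
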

\begin{proof}
If $f$ is not cyclic then it lies in a non-trivial $B$ invariant subspace. Then by Proposition \ref{bigger space} $f \in I^*(N^+)$ for some inner function $I$, which then means $\theta f \in ( \theta I)^*(N^+)$ and is therefore not cyclic for $B$. Conversely if $\theta f$ is not cyclic, $\theta f$ lies in some one component $B$ invariant subspace and hence so does $f$. So $f$ can not be cyclic.
\end{proof}
Combining the two previous Theorems and the previous Proposition we can deduce the following unifying theorem.
\begin{thm}
Let $\begin{pmatrix}
\phi_1 \\
\phi_2
\end{pmatrix},
\begin{pmatrix}
\psi_1 \\
\psi_2
\end{pmatrix} \in H^p(\mathbb{D}, \mathbb{C}^2 )$ be such that $\phi_1\psi_2 - \psi_1\phi_2 = 0$. Then we have

$$
\kappa_{min} \left( \begin{pmatrix}
\phi_1 \\
\phi_2
\end{pmatrix},
\begin{pmatrix}
\psi_1 \\
\psi_2
\end{pmatrix} \right) = \ker T_{ \begin{pmatrix}
\phi_2/u & -\phi_1/u \\
0 & \chi \\
\end{pmatrix} },
$$
where $u$ is an outer function such that $|u| = \phi_1 + \phi_2 +1$ and $\chi$ is our previously given symbol for the scalar Toeplitz kernel $\kappa_{min} ( \phi_2, \psi_2 )$. (Here if $\kappa_{min} ( \phi_2, \psi_2 ) = H^p$ the symbol is formally defined to be $0$.)
 \end{thm}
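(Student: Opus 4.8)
The plan is to recognise this theorem as a unification of the two immediately preceding theorems, which treat the mutually exclusive cases where the relevant quotient is not cyclic and is cyclic for the backward shift. First I would argue that these two cases are genuinely complementary and exhaustive over all pairs satisfying $\phi_1\psi_2-\psi_1\phi_2=0$, and then verify that in each case the claimed symbol $\begin{pmatrix} \phi_2/u & -\phi_1/u \\ 0 & \chi \end{pmatrix}$ reduces exactly to the symbol already established. Since outer functions are invertible in $N^+$, the quotients $\frac{\psi_2}{\phi_2^o}$ and $\frac{\psi_2}{\overline{\beta}\phi_2}$ both lie in $N^+$ (using the remark that $\phi_1\psi_2-\psi_1\phi_2=0$ forces every inner factor of $\overline{\beta}\phi_2$ to divide $\psi_2$), so cyclicity is a well-defined dichotomy.

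The key reconciliation I would carry out is between the two cyclicity hypotheses in the earlier theorems: the non-cyclic theorem is phrased via $\frac{\psi_2}{\phi_2^o}$, while the cyclic theorem is phrased via $\frac{\psi_2}{\overline{\beta}\phi_2}$, where $\beta=GCD(\phi_1^i,\phi_2^i)$. Working on $\mathbb{T}$ and using $\overline{\beta}=1/\beta$ there, one computes $\frac{\psi_2}{\phi_2^o}=\frac{\phi_2^i}{\beta}\cdot\frac{\psi_2}{\overline{\beta}\phi_2}$, and since $\beta\mid\phi_2^i$ the factor $\frac{\phi_2^i}{\beta}$ is inner. By Proposition \ref{inner cyclic} multiplication by an inner function preserves cyclicity, so $\frac{\psi_2}{\phi_2^o}$ is cyclic if and only if $\frac{\psi_2}{\overline{\beta}\phi_2}$ is. This shows the hypotheses of the two earlier theorems are precisely the two complementary alternatives, so between them every admissible pair is covered.

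Next I would identify $\chi$ in each case with the scalar symbol for $\kappa_{min}(\phi_2,\psi_2)$ furnished by Section 4. In the non-cyclic case Theorem \ref{3.1}, applied with $f=\phi_2$ and $g=\psi_2$, gives $\kappa_{min}(\phi_2,\psi_2)=\ker T_{\overline{\phi_2^o}\overline{\theta}/\phi_2^o}$ with $\theta$ determined by the smallest one-component $B$-invariant subspace containing $\frac{\psi_2}{\phi_2^o}$ and $\phi_2^i$; this is verbatim the $\chi$ entry in the non-cyclic theorem. In the cyclic case Theorem \ref{min} gives $\kappa_{min}(\phi_2,\psi_2)=H^p$, so $\chi=0$ under the stated convention, matching the symbol of the cyclic theorem. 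Invoking the corresponding earlier theorem then yields the asserted minimal kernel, and since the normalisation $|u|=|\phi_1|+|\phi_2|+1$ is the same throughout, the two symbols coincide on the nose in both branches.

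The main obstacle I anticipate is bookkeeping rather than analysis: one must check that the inner function $\theta$ (hence $\chi$) produced in the vectorial non-cyclic theorem is literally the same object as the one produced by the scalar Theorem \ref{3.1}, i.e. that the minimal one-component $B$-invariant subspace containing $\{\tfrac{\psi_2}{\phi_2^o},\phi_2^i\}$ is the one governing both settings. A secondary point is to dispose of the degenerate situations — notably $\phi_2\equiv 0$, where $\phi_2^o$ and the quotients are undefined — by reducing to a direct check, so that the normalisation of $u$ and the formal convention $\chi=0$ remain unambiguous.
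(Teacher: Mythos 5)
Your proposal is correct and takes essentially the same route as the paper: the paper in fact offers no written proof, saying only that the result follows by ``combining the two previous Theorems and the previous Proposition,'' and your argument is precisely that combination carried out in detail. In particular, your identity $\frac{\psi_2}{\phi_2^o} = \frac{\phi_2^i}{\beta}\cdot\frac{\psi_2}{\overline{\beta}\phi_2}$ together with Proposition \ref{inner cyclic}, showing that the cyclicity hypotheses of the two preceding theorems are exactly complementary, is the bridge the paper's one-line deduction implicitly relies on.
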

 
 \section*{Acknowledgements}
 The author is grateful to the EPSRC for financial support. \newline The author is grateful to Professor Partington for his valuable comments. \newline
 The author is grateful to the referee for their comments.
 \newpage
 \bibliographystyle{plain}
\bibliography{bibliography.bib}

\begin{thebibliography}{10}

\bibitem{MR2736342}
J.~A. Ball, V.~Bolotnikov, and S.~Horst.
\newblock Interpolation in de {B}ranges-{R}ovnyak spaces.
\newblock {\em Proc. Amer. Math. Soc.}, 139(2):609--618, 2011.

\bibitem{barclay2009solution}
S.~Barclay.
\newblock A solution to the {D}ouglas-{R}udin problem for matrix-valued
  functions.
\newblock {\em Proceedings of the London Mathematical Society}, 99(3):757--786,
  2009.

\bibitem{MR3286053}
M.~C. C\^amara and J.~R. Partington.
\newblock Near invariance and kernels of {T}oeplitz operators.
\newblock {\em J. Anal. Math.}, 124:235--260, 2014.

\bibitem{camara2018scalar}
M.~C. C{\^a}mara and J.~R. Partington.
\newblock Scalar-type kernels for block {T}oeplitz operators.
\newblock {\em arXiv preprint arXiv:1810.09789}, 2018.

\bibitem{MR3806717}
M.~Cristina C\^{a}mara and Jonathan~R. Partington.
\newblock Multipliers and equivalences between {T}oeplitz kernels.
\newblock {\em J. Math. Anal. Appl.}, 465(1):557--570, 2018.

\bibitem{camara2016model}
M.C. C{\^a}mara, M.T. Malheiro, and J.R. Partington.
\newblock Model spaces and {T}oeplitz kernels in reflexive {H}ardy spaces.
\newblock {\em Operators and Matrices}, 10(1):127--148, 2016.

\bibitem{MR2651921}
I.~Chalendar, N.~Chevrot, and J.~R. Partington.
\newblock Nearly invariant subspaces for backwards shifts on vector-valued
  {H}ardy spaces.
\newblock {\em J. Operator Theory}, 63(2):403--415, 2010.

\bibitem{cima2000backward}
J.~A. Cima and W.~T. Ross.
\newblock {\em The backward shift on the Hardy space}.
\newblock Number~79. American Mathematical Soc., 2000.

\bibitem{duren1970theory}
P.~L. Duren.
\newblock Theory of ${H}^p$ spaces.
\newblock {\em Pure Appl. Math}, 38:74, 1970.

\bibitem{dyakonov2000kernels}
K.~M. Dyakonov.
\newblock Kernels of {T}oeplitz operators via {B}ourgain's factorization
  theorem.
\newblock {\em Journal of Functional Analysis}, 170(1):93--106, 2000.

\bibitem{gamelin1984uniform}
T.W. Gamelin.
\newblock Uniform algebras, chelsea publ.
\newblock {\em Co., New York}, 1984.

\bibitem{hartmann2003extremal}
A.~Hartmann and K.~Seip.
\newblock Extremal functions as divisors for kernels of {T}oeplitz operators.
\newblock {\em Journal of Functional Analysis}, 202(2):342--362, 2003.

\bibitem{havin2006linear}
V.~P. Havin and N.~K. Nikolski.
\newblock {\em Linear and Complex Analysis Problem Book 3}.
\newblock Springer, 2006.

\bibitem{MR853630}
E.~Hayashi.
\newblock The kernel of a {T}oeplitz operator.
\newblock {\em Integral Equations Operator Theory}, 9(4):588--591, 1986.

\bibitem{hitt1988invariant}
D.~Hitt.
\newblock Invariant subspaces of ${H}^2$ of an annulus.
\newblock {\em Pacific Journal of Mathematics}, 134(1):101--120, 1988.

\bibitem{katsnelson1997theory}
V.~E. Katsnelson and B.~Kirstein.
\newblock On the theory of matrix-valued functions belonging to the {S}mirnov
  class.
\newblock In {\em Topics in interpolation theory}, pages 299--350. Springer,
  1997.

\bibitem{nikolski2002operators}
N.~K. Nikolski.
\newblock {\em Operators, Functions, and Systems-An Easy Reading: Hardy,
  Hankel, and Toeplitz}, volume~1.
\newblock American Mathematical Soc., 2002.

\bibitem{sarason1988nearly}
D.~Sarason.
\newblock Nearly invariant subspaces of the backward shift.
\newblock In {\em Contributions to operator theory and its applications}, pages
  481--493. Springer, 1988.

\end{thebibliography}
\end{document}